\newtheorem{theorem}{Theorem}
\newtheorem{lemma}{Lemma}
\newtheorem{corollary}{Corollary}
\newtheorem{question}{Question}
\newcommand{\E}{{{\bf E}}}
\title{Optimal promotions of new products on networks}
\author{Gadi Fibich and Amit Golan}
\date{\small{OR/MS Indices: Marketing (Promotion, New products), Networks/graphs\\
		Area of review: Stochastic Models}}
\let\inserttitle\@title
\begin{document}
	\maketitle
	
	\begin{abstract}
		We present a novel methodology for analyzing the optimal promotion in the Bass model for the spreading of new products on networks. For general networks with $M$~nodes, the optimal promotion is the solution of $2^M-1$ nonlinearly-coupled boundary-value problems. On structured networks, however, the number of equations can be reduced to a managable size which is amendable to simulations and analysis. This enables us to gain insight into the effect of the network structure on optimal promotions. We find that the optimal advertising strategy decreases with time, whereas the optimal boosting of peer effects increases from zero  and then decreases. In low-degree networks, it is optimal to prioritize advertising over boosting peer effects, but this relation is flipped in high-degree networks. When the planning horizon is finite, the optimal promotion continues until the last minute, as opposed to an infinite planning horizon where the optimal promotion decays to zero. Finally, promotions with short planning horizons can yield an order of magnitude higher increase of profits, compared to those with long planning horizons.
	\end{abstract}
	
	\section{Introduction}
	Spreading processes on networks have attracted the attention of researchers  
	in mathematics, physics, biology, computer science, social sciences, economics, management science, and more,
	as they concern the spreading of ``items'' ranging from diseases and computer viruses to rumors, information, opinions, technologies, and 
	innovations~\cite{Albert-00,Anderson-92,Jackson-08,Pastor-Satorras-01,Strang-98}. The first quantitative model of the diffusion of new products was proposed in 1969 by 
	Bass~\cite{Bass-69}. In this model, individuals adopt an innovative product as a result of {\em external influences} by mass media and {\em internal influences} by individuals who already adopted the product (``word of mouth'', ``peer effect''). 
	
	The Bass model is a  compartmental model. Thus, the population is divided into two compartments, adopters and nonadopters, and individuals move between the two compartments at a rate which is given by an ordinary differential equation (ODE). The compartmental Bass model is one of the most cited papers in \textit{Management Science}~\cite{Hopp-04}. For many years, all of its extensions have also been compartmental models that are given by one or several deterministic ODEs. 
	Compartmental models are relatively easy to analyze. The main weakness of these models is that they implicitly assume that the underlying social network is an infinite complete graph~\cite{Fibichcompartmental,Niu-02}.  Real-life social networks, however, are sparse, as each individual only knows a tiny fraction of the population. Therefore, with the emergence of network science in the early 2000s, research has gradually shifted to the Bass model on networks, which is an agent-based model in which the adoption by each individual is stochastic, and it allows for any
	network structure.
	
	In order to accelerate the adoption process, a company can invest in a promotion that boosts the external and/or internal influences~\cite{Horsky-83, Muller2019, VanDenBulte2018, Schmitt2011}. This naturally led researchers to analyze the optimal promotion in the Bass model. Until now, these studies have only been conducted within the framework of compartmental Bass models~\cite{Bass1994,Fruchter-22,Fruchter2011, Horsky-83,Krishnan2006,Teng-1983,Teng-1984}. Most likely, this is because analyzing the optimal promotion in the Bass model on networks is much more challenging, as {\em there are no standard tools in optimal control theory for networks with stochastic dynamics}.
	
	As noted, compartmental models assume that the underlying social network is an infinite complete graph. Therefore, it has not not clear to what extent the theory of  optimal promotions on compartmental models remain valid for network models. In this work, we present the first analytical study of optimal time-varying promotional strategies in the Bass model on networks considering both uniform policies (i.e., identical across individuals) and targeted policies hat differentiate among them. We develop a systematic methodology that provides an exact system of $2^{M}-1$ coupled boundary-value problems for the optimal promotion on any network, where $M$ is the number of nodes. We then show that on structured networks, we can obtain a reduced system of ODEs, which is exact and amendable to analysis. This enables us to gain insight into how {\em the structure of the underlying network, as well as the duration and cost function of the promotion, affects the optimal promotion and the resulting profit gain}.
	
	\subsection{Literature review}
	\label{sec:Literature-review}
	
	In~\cite{Horsky-83}, Horsky et al.\ analyzed the optimal advertising strategy in the compartmental Bass model
	\begin{equation}
		\label{eq:Bass_intro}
		\frac{df}{dt}= \big(1-f\big)\Big(p(s(t))+qf\Big), \qquad f(0)=0,
	\end{equation}
	where $f(t)$ is the fraction of the population that adopted the product by time $t$, the parameters~$p$ and~$q$ account for the external and internal influence rates, and~$s(t)$ is the advertising spending rate. Here ``optimal'' refers to maximizing the accumulated profits
	\begin{equation*}
		\Pi[s(t)]:=\int_{t=0}^{T}e^{-\theta t}\Big(\gamma \frac{df}{dt}-s(t) \Big)\, dt,
	\end{equation*}
	where $T$ is the planning horizon, $\theta$ is the discount rate, and $\gamma$ is the income generated by the sale of one unit of the product.
	Horsky et al.\ showed that the optimal advertising strategy $s^{\rm opt}(t):={\rm argmax}_{s(t)} \Pi[s(t)]$ decreases with time.
	
	Subsequent studies showed that in the generalized Bass model, the optimal advertising strategy increases with time~\cite{Bass1994, Fruchter2011, Krishnan2006}. Fruchter and Van den Bulte~\cite{Fruchter2011} argued that these results run ``counter to empirical findings favoring high initial spending'', thus casting ``doubts on the value of the generalized Bass model for normative purposes''.
	Fruchter et al.\ \cite{Fruchter-22} analyzed a heterogeneous compartmental Bass model in which the population is divided into two groups, such that the adopters in the first group influence the nonadopters in the second to adopt, and the adopters in the second group influence the nonadopters in the first group not to adopt. They found conditions under which the optimal advertising strategy for the first group is U-shaped with respect to time.
	
	Teng and Thompson~\cite{Teng-1983, Teng-1984} considered promotions that simultaneously increase~$p$ and~$q$, i.e., where $p=p(s(t))$ and $q=q(s(t))$. They found that the optimal promotion~$s^{\rm opt}(t)$ is bang-bang: It is either identically zero or it alternates between a positive value and zero. When there are multiple competing products, they observed numerically that $s^{\rm opt}(t)$ is no longer bang-bang but rather behaves smoothly: It can be identically zero, decay monotonically, or initially increase and then decay. Dockner and Jorgenson~\cite{Dockner-1988} showed that if the promotion $s(t)$ affects both $p$ and $q$, the planning horizon $T$ is sufficiently small, and the promotion has a stronger effect on $q$ than on $p$, then $s^{\rm opt}(t)$ is increasing.
	
	As far as we know, optimal promotions in the Bass model on networks 
	has only been studied numerically by Goldenberg et al.~\cite{Goldenberg2001}. They observed  that in small-world networks, it is more beneficial to advertise heavily early on, and decrease the advertising level with time.

	\subsubsection{Optimal seeding}
	
	The above studies considered promotions that vary in time and are uniform in space, and studied the qualitative behavior of $s^{\rm opt}(t)$ over time (i.e., whether it is increasing or decreasing). A different line of research has been devoted to the problem of optimal seeding on networks. Most studies have used the SI model on networks~\cite{Akbarpour2018, Banerjee-13, Beaman2021}, which is the Bass model with no external influences ($p\equiv 0$). Typically, in optimal seeding, the product is given for free to a given number of individuals (``seeds'') at $t=0$, and the goal is to select the nodes that would maximize the number of adopters at the end of the planning horizon. There are notable differences between the problems of optimal seeding and time-varying optimal promotions:
	\begin{enumerate}
		\item 
		In optimal seeding, the promotion occurs at $t=0$ and is non-uniform in space. In optimal promotions, the promotion varies in time and is typically uniform in space.
		
		\item 
		In optimal seeding, the goal is typically to maximize the number of adopters at the end of the planning horizon. In optimal promotion studies, the time at which an individual adopts the product is also significant, due to the discount factor.
		\item
		In optimal seeding, the number of seeds is  fixed, and the objective is to find the best locations for the seeds. In optimal promotion studies, there is an added dimension of determining the optimal amount of money to invest. In analogy to optimal seeding, this would be equivalent to determining the optimal number of seeds which would maximize the profits.
	\end{enumerate}

	In the Bass model on networks, any node can also adopt due to external influences, and so using optimal seeding (i.e., a non-uniform strategy) to maximize the adoption level at the end of the planning horizon is less important. Indeed, Rossman and Fisher~\cite{Rossman2021} showed that even with extremely low values of $p$, seeding highly-connected nodes in the Bass model on networks has a negligible benefit, compared to random seeding.

	\subsection{Methodological contributions}
	\label{subsec:methods}
	
	As noted, the main challenge in developing a methodology for the analysis of optimal promotions in the Bass model on networks is that the spreading dynamics are stochastic. Furthermore, we want the analysis to be exact, and not based on some approximation (mean-field, closure, \dots) whose accuracy is not always clear. In order to do that, our starting point are the master equations, which are $2^M-1$ linearly-coupled deterministic initial-value problems, where $M$ is the number of nodes. The solution of the master equations gives the exact expected adoption level as a function of time for a general network~\cite{fibich2021diffusion}. We then combine the master equations with Pontryagin's maximum principle from optimal control theory~\cite{kamienschwartz} to obtain an exact system of $2^{M}-1$ nonlinearly-coupled boundary-value problems for the optimal promotion on a general network.
	
	In theory, this means that we can find the optimal promotion on any network. In practice, analyzing or even solving numerically a system of $2^M-1$~boundary-value problems is only possible for small $M$. On structured networks that possess some symmetries, however, the system size can be significantly reduced, without sacrificing the exactness of the equations. For example, on a complete network, we arrive at a system of $M$ boundary-value problems. Moreover, as $M\to\infty$, this system can be reduced to a single boundary-value problem, without making any approximation. A similar exact reduction is demonstrated for infinite one-dimensional networks and for infinite heterogeneous complete networks.
	
	In previous studies of optimal promotions in compartmental Bass models, the promotion either affected only $p$, or it simultaneously affected $p$ and $q$. Here we adopt a different approach and allow for dual independent promotional policies, $s_p(t)$ and $s_q(t)$, that lead to an increase only in~$p$ and only in~$q$, respectively. There are several reasons for this modeling choice:
	\begin{enumerate}
		\item 
		Some promotions only lead to an increase in~$p$ (e.g., advertisements) or only in~$q$ (e.g., referral fees).
		\item
		In our model formulation, a promotion only in~$p$ is a special case where $s_q \equiv 0$, and a promotion only in~$q$ is a special case where $s_p\equiv 0$.
		\item The separation of the optimal promotion into~$s_p^{\rm opt}(t)$ and~$s_q^{\rm opt}(t)$ allows us to study how their relative sizes depend on the network properties. This, in turn, enables us
		to understand whether the external or the internal influences are the dominant mechanism in optimal promotions that simultaneously affect~$p$ and~$q$.
	\end{enumerate}
	
	Obviously, some of the choices in our model formulation  may not be applicable to some new products: having two independent controls $s_p(t)$ and $s_q(t)$, the specific assumption on the dependence of the adoption rate on $s_p(t)$ and $s_q(t)$, see~\eqref{eq:p_q_s}, and the specific profit functional that we use, see~\eqref{eq:profit}. We stress, however, that {\em the methodology of combining the master equations with Pontryagin's Maximum Principle, and the subsequent exact reduction of the number of equations by utilizing network symmetries, can be easily extended to other models of promotions on networks}. Moreover, this methodology can be used to analyze optimal vaccination or quarantine policies in epidemiological models on networks~\cite{Balderrama2022-qa}. For example, in Section~\ref{sec:alternative}, we demonstrate that our methodology readily extends to {\em alternative profit functionals}. In particular, we consider settings in which the cost of promotion in~$p$ scales with the number of non-adopters, while the cost of promotion in~$q$ scales with the number of interactions between adopters and non-adopters. This formulation is especially well suited for analyzing targeted promotion strategies, such as personalized advertising and referral-based discounts.
	
	In Section~\ref{sec:Literature-review} we saw that prior research on optimal promotions has primarily focused on the qualitative behavior of the optimal promotion function $s^{\rm opt}(t)$.  This study introduces a new analytic dimension by examining the ``{\em effectiveness}'' of the optimal promotion, namely, the relative profit increase that it generates compared to the baseline scenario, i.e., 
	\begin{equation}
		\label{eq:delta_pi}
		\Delta\Pi^{\rm opt}_{\rm rel}:=\frac{\Pi\left[{\bf s}^{\rm opt}(t)\right]-\Pi^0}{\Pi^0}, \qquad \Pi^0:=\Pi[{\bf s}\equiv 0].
	\end{equation}
	We use this measure to analyze the dependence of the effectiveness of the optimal promotion on the network structure, on the network size~$M$, and on the planning horizon~$T$.
	
	We also formulate the problem of {\em feedback (closed-loop)} optimal promotions on networks (Section~\ref{sec:closed-loop}). This setting is non-standard: the control depends on the current state of the system, while the objective is to maximize expected profit. Consequently, the problem falls outside the scope of classical Hamilton–Jacobi–Bellman analysis.

	\subsection{Characteristics of optimal promotions}
	\label{sec:Emerging-picture}
	
	In this paper we analyze and compute numerically the optimal promotion in the Bass model on finite and infinite homogeneous complete networks, on infinite homogeneous one dimensional networks, and on infinite heterogeneous complete networks. Based on these results, we can formulate some characteristics of optimal promotions on networks. We stress, however, that while these characteristics seem to be intuitive, whether they hold for all networks is an open problem.
	
	\begin{enumerate}

		\item  {\em The optimal advertising strategy~$s_p^{\rm opt}(t)$  decreases with time.} 
		
		Indeed, it is more advantageous to advertise early on when there are more potential adopters, and when each promotion-added adopter has more time to influence the non-adopters. 
		
		\item  {\em  The optimal word-of-mouth promotion~$s_q^{\rm opt}(t)$ starts from zero and initially increases with time. If the planing horizon~$T$ is not ``short'', after some time $s_q^{\rm opt}(t)$ changes course and decreases with time.}
		
		The impact of a word-of-mouth promotion increases with the number of adopters (that can influence the nonadopters) and with the number of nonadopters (that can be influenced by the adopters).
		At $t=0$ there are no adopters, and so $s_q^{\rm opt}(0)=0$.
		Initially, as $t$~increases, the number of adopters increases and there are plentiful of nonadopters. Therefore, $s_q^{\rm opt}(t)$ increases. 
		If $T$ is not ``short'', however, after some time the decrease in the number of non-adopters outweights the increase in the number of adopters. Therefore, from that time and onward, $s_q^{\rm opt}(t)$ decreases.
		
		\item {\em When $T$ is infinite, $s_p^{\rm opt}(t)$ and $s_q^{\rm opt}(t)$ decrease to zero as $t\to\infty$. When $T$ is finite, $s_p^{\rm opt}(t)$ and $s_q^{\rm opt}(t)$ remain positive until the very end of the planning horizon.}
		
		This is because when $T=\infty$, the entire population adopts the product as $t\to \infty$, and so there are no non-adopters left to be influenced.  When $T<\infty$, there are still non-adopters present at $t=T$, and so it makes sense to promote until the last minute.

		\item {\em In low-degree networks
			(small complete networks, infinite lines), 
			$s_p^{\rm opt}(t)$ is larger than or of comparative size to $s_q^{\rm opt}(t)$   throughout the promotion. In high-degree networks 
			(large or infinite complete homogeneous and heterogeneous networks), 
			after a short initial stage $s_q^{\rm opt}(t)$ becomes significantly larger than $s_p^{\rm opt}(t)$ and remains so until the end of the planning horizon.}
		
		In low-degree networks, each adopter 
		can only influence (directly or indirectly) a few non-adopters,
		whereas  in high-degree networks, each adopter can influence numerous nonadopters.

		\item {\em  The qualitative dynamics of the total spending rate $s_p^{\rm opt}(t)+s_q^{\rm opt}(t)$  can exhibit two different behaviors:
			\begin{enumerate}
				\item $s_p^{\rm opt}(t)+s_q^{\rm opt}(t)$ decreases for all times.
				
				\item  $s_p^{\rm opt}(t)+s_q^{\rm opt}(t)$ initially decreases and then increases. If $T$ is not ``short'', there is a third region where $s_p^{\rm opt}(t)+s_q^{\rm opt}(t)$ decreases again.
				
			\end{enumerate} 
		}
		
		Since  $s_p^{\rm opt}(t)$  decreases throughout the promotion and~$s_q^{\rm opt}(t)$ initially increases and then decreases, the dynamics of the total spending rate depends on the relative sizes of $s_p^{\rm opt}(t)$ and $s_q^{\rm opt}(t)$. Thus, behavior~(a) occurs when $s_p^{\rm opt}(t)$ is dominant over $s_q^{\rm opt}(t)$ at all times, and behavior~(b) occurs when $s_p^{\rm opt}(t)$ is initially dominant over $s_q^{\rm opt}(t)$, but later $s_q^{\rm opt}(t)$ becomes dominant over $s_p^{\rm opt}(t)$.

		\item {\em The ``effectiveness'' of an optimal promotion with a short planning horizon can be an order-or-magnitude larger than of that with a long planning horizon.}
		
		Following Libai et al.~\cite{Libai-2013},  promotions can increase the profits by ``acceleration''
		(speeding up of adoptions that would have happened even without a promotion) and by ``expansion'' (when a promotion leads to adoptions by customers that otherwise would not have adopted the product). When $T$ is short, the promotion mainly leads to an expansion. In contrast, 
		when $T$ is long, most individuals will adopt by $t=T$ even without a promotion. Therefore, the promotion mainly leads to acceleration. Since {\em  an expansion is much more profitable than an acceleration},
		promotions over short planing horizon are considerably more ``effective'' than over long ones.
		
		\item {\em If in the absence of a promotion the adoption process in the network~${\cal N}^A$ is slower than in the network~${\cal N}^B$, the optimal promotion in~${\cal N}^A$ will be more effective than in~${\cal N}^B$.}
		
		Intuitively, if the network~${\cal N}^B$ is more efficient at spreading the new product, the promotion has to ``work harder'' to further speed up the adoption process.

		\item  {\em If the population is homogeneous, the optimal promotion 
			is to promote uniformly to all individuals, rather than to apply a different promotion to each half of the population.}
		
		This is because, generally speaking, heterogeneity slows down the adoption process~\cite{fibich2021diffusion,Fibichcompartmental}.

	\end{enumerate}
	
	As noted, these characteristics are based on analysis and simulations of the Bass model on several networks. Whether they also hold on other networks, or for Bass models on networks that use different assumptions from ours, is an open problem.  The methodology that is developed in this paper, however, is quite general, and can be used to study this open problem.

	\subsection{Effect of network structure on optimal promotions}
	
	Most of the above characteristics of optimal promotions are valid both for compartmental models and for network models. Characteristic~4, however, is a network effect, as there are no low-degree compartmental models. This characteristic shows that the relative sizes of~$s_p^{\rm opt}(t)$ and~$s_q^{\rm opt}(t)$ in low-degree networks is very different from that predicted by compartmental models. Further research is needed to determine the relative sizes of~$s_p^{\rm opt}(t)$ and~$s_q^{\rm opt}(t)$ on  real-life social networks, which are somewhere ``in-between'' these two artificial networks. 
	
	Networks exhibit structural properties beyond simple sparsity or density. In the Bass model on networks, diffusion is governed by local structural features (e.g., clustering and degree heterogeneity), rather than global characteristics (e.g., diameter)~\cite{OR-10}. Understanding how such local properties affect optimal promotion requires considering more general network structures. While the methodology developed in this paper applies to any structure, the associated boundary-value problem becomes tractable only when the full system of master equations can be reduced to a low-dimensional system. We leave this direction for future research.

	\subsection{Paper organization}
	
	The remainder of the paper is organized as follows. In Section~\ref{sec:model}, we introduce the Bass model on networks and the problem of optimal promotions on networks. In Section~\ref{sec:gen}, we combine the master equations for the Bass model on networks with Pontryagin's maximum principle from optimal control theory to obtain an exact system of $2^{M}-1$~boundary-value problems for the optimal promotion on a general network. In Section~\ref{sec:finite}, we consider complete network with $M$~nodes. By utilizing the network symmetry, we obtain an exact system of $M$ boundary-value problems for the optimal promotion. We use this system to study the effect of the network size on the optimal promotion, and observe that as $M$~increases, the relative size of~$s^{\rm opt}_q(t)$ increases, but the effectiveness of the optimal promotion decreases. When the complete network is infinite, we can further reduce the number of equations to a single boundary-value problem for the optimal promotion (Section~\ref{sec:infinite}). This enables us to analyze the optimal promotion at the beginning and at the end of the planning horizon, and to show that short promotions are considerably more effective than long ones. In Section~\ref{sec:line}, we consider a sparse infinite network-- the infinite line. We derive an exact system of two boundary-value problems for the optimal promotion on the infinite line, which turns out to be closer to that on a small complete network than to that on an infinite complete network. In Section~\ref{sec:heterogeneous}, we consider the role of heterogeneity, by analyzing the optimal promotion on infinite complete heterogeneous networks that consist of two equal-size homogeneous groups. We consider promotions that are uniform across all the population, as well as promotions where each group is targeted differently. In Section~\ref{sec:suboptimal}, we show that in a homogeneous infinite complete network, uniform promotional strategies dominate non-uniform policies. In Section~\ref{sec:Alternative-models}, we examine the robustness of the assumption that~$p$ and~$q$ depend on promotional spending through a square-root functional form by considering alternative specifications, including a $k$th-root form and a logarithmic form. In Section~\ref{sec:closed-loop}, we formulate and briefly discuss the problem of closed-loop (feedback) promotions on networks. Finally, in Section~\ref{sec:alternative}, we present an alternative profit formulation where the promotion costs depend on the number of adopters and nonadopters, and discuss how this change affects the optimal promotion. The appendices contain most of the proofs, as well as a description of the two numerical methods used to solve the boundary-value problems for the optimal promotion.

	\section{Model formulation}
	\label{sec:model}
	
	In this section we formulate the mathematical model for optimal promotions of new products on networks. We represent the population by a graph with the nodes $\mathcal{M}=\{1,\dots,M\}$. The state of each individual (node) is a random variable so that 
	$$
	X_j(t)=\begin{cases}
		1, & \ {\rm if}\ j\ {\rm adopts\ by\ time}\ t,\\
		0, & \ {\rm otherwise,}
	\end{cases}
	\qquad j \in\mathcal{M}.
	$$	
	When the product is first introduced into the market, there are no adopters, and so
	\begin{subequations}
		\label{eqs:Bass-model}
		\begin{equation}
			X_j(0)=0, \qquad j \in\mathcal{M}.
		\end{equation}
		So long that node~$j$ did not adopt the product, its adoption rate at time $t$ is
		\begin{equation*}
			%	\label{eq:rate}
			\lambda_j(t)=p_j(t)+\sum_{k=1}^{M}q_{k\to j}(t) X_k(t),  %\qquad j\in\mathcal{M},
		\end{equation*}
		where $p_j$ is the rate of external influences on~$j$, 
		and $q_{k\to j}$ is the rate of internal influences by $k$ on~$j$,
		provided that $k$ is already an adopter. 
		Let the firm implements the promotion ${\bf s}(t):=\left(s_p(t), s_q(t)\right)$, such that $s_p(t)$ and $s_q(t)$ are the spending rates per consumer that lead to an increase in~$p_j$ (e.g., advertisements) and in~$q_{k\to j}$ (e.g., referral fees), respectively. Then the adoption rate of~$j$ is 
		\begin{equation}
			\label{eq:pjqj}
			\lambda_j(t)=p_j(s_p(t))+\sum_{k=1}^M q_{k\to j}(s_q(t))X_k(t), \qquad j\in \mathcal{M}.
		\end{equation}
		
		Once $j$ adopts the product, it remains an adopter at all later times.
		Therefore,  as $\Delta t\to 0$,
		\begin{equation}
			\mathbb{P}\Big(X_j(t+ \Delta t)=1 \mid \{X_1(t),\dots,X_M(t)\}\Big)
			=\begin{cases}
				1 & \ {\rm if}\ X_j(t)=1,\\
				\lambda_j(t) \Delta t, & \ {\rm if}\ X_j(t)=0,
			\end{cases}\qquad j\in\mathcal{M}.
		\end{equation}
	\end{subequations}
	Equations~\eqref{eqs:Bass-model} constitute {\bf the Bass model on a network} for the stochastic adoption dynamics of a new product. 
	
	The optimal promotion can be formulated as the optimization problem
	\begin{subequations}
		\label{eq:profit}
		\begin{equation}
			\label{eq:profit-a}
			{\bf s}^{\rm opt}(t)={\rm argmax}_{{\bf s}(t)\in {\cal S}^2}\Pi\left[{\bf s}(t)\right], \qquad \Pi\left[{\bf s}(t)\right]:=\int_{t=0}^{T}\pi({\bf s}(t))\, dt,
		\end{equation}
		where
		$\mathcal{S}$
		is the set of piecewise-continuous functions in $[0,T]\to R^+$, 
		$T$ is the planning horizon, $\Pi$~is the expected accumulated profits per consumer between $t=0$ and $t=T$, 
		\begin{equation}
			\label{eq:profit-b}
			\pi({\bf s}(t)):=e^{-\theta t}\Big(\gamma \frac{df}{dt}-\big(s_p(t)+s_q(t)\big) \Big) 
		\end{equation}
		is the discounted profit rate  per consumer, $\theta$~is the discount rate, $\gamma$~is the income generated by the sale of one unit of the product, and 
		\begin{equation}
			\label{eq:frac_adopters}
			f(t):=\frac{1}{M}\sum_{j=1}^{M}\E[X_j(t)]
		\end{equation} 
	\end{subequations}
	is the expected adoption level (fraction of adopters) in the Bass model~\eqref{eqs:Bass-model}.
	Hence, \eqref{eq:profit} is an {\em optimal control problem on a deterministic network with stochastic spreading dynamics}. 
	
	\section{Optimal promotions on general networks}
	\label{sec:gen}
	
	Consider the Bass model~\eqref{eqs:Bass-model} on a general network. Let $\emptyset\neq \Omega\subset\mathcal{M}$ be a subset of the nodes, and denote the event that at time $t$ all of the nodes in $\Omega$ are non-adopters, and the probability of this event by
	$$
	S_{\Omega}(t):=\{X_m(t)=0,\ m\in\Omega\},\qquad [S_{\Omega}](t):=\mathbbm{P}\left(S_{\Omega}(t)\right),
	$$
	respectively. Let us also introduce the notations
	\begin{equation}
		\label{eq:notations}
		p_\Omega:=\sum_{m\in\Omega}p_m,\qquad q_{k\to\Omega}:=\sum_{m\in\Omega}q_{k\to m},\qquad [S_j]:=[S_{\{j\}}],\qquad [S_{\Omega,k}]:=[S_{\Omega\cup \{k\}}].
	\end{equation}
	The optimal promotion in the Bass model on a general network is given as follows.
	\begin{theorem}
		\label{thm:gen}
		Consider the Bass model~\eqref{eqs:Bass-model}. For any promotion ${\bf s}(t):=\left(s_p(t),s_q(t)\right)\in{\mathcal S}^2$, let $\{[S_\Omega]\}_{\emptyset\neq \Omega \subset \mathcal{M}}$ and $\{\Psi_\Omega\}_{\emptyset\neq \Omega \subset \mathcal{M}}$ be the solutions to the coupled boundary-value problems
		\begin{subequations}
			\label{eqs:thm_gen}
			\begin{align}
				\label{eq:s_prime_general}
				\frac{d[S_{\Omega}]}{dt}&=-\Big(p_\Omega(s_p(t)) +\sum_{k\in\Omega^c}q_{k\to\Omega}(s_q(t))\Big)[S_{\Omega}]+\sum_{k\in\Omega^c}q_{k\to\Omega}(s_q(t))[S_{\Omega,k}], \qquad &[S_{\Omega}](0)=1,
				\\
				\label{eq:lambda_prime_general}
				\frac{d\Psi_{\Omega}}{dt}&=\Big(\Psi_\Omega(t)-\mathbbm{1}_{|\Omega|=1}\frac{\gamma}{M}e^{-\theta t}\Big)\Big(p_\Omega(s_p(t)) +\sum_{k\in\Omega^c}q_{k\to\Omega}(s_q(t))\Big)\nonumber \\
				&\quad-\sum_{m\in\Omega}\Big(\mathbbm{1}_{|\Omega|>1}\Psi_{\Omega\backslash\{m\}}(t)-\mathbbm{1}_{|\Omega|=2}\frac{\gamma}{M}e^{-\theta t}\Big)q_{m\to\Omega\backslash\{m\}}(s_q(t)),\qquad &\Psi_\Omega(T)=0.
			\end{align}
			Then the optimal promotion ${\bf s}^{\rm opt}:=(s_p^{\rm opt}, s_q^{\rm opt})$, see~\eqref{eq:profit}, satisfies the condition
			\begin{equation}
				\label{eq:c}
				\frac{\partial H}{\partial s_p}=\frac{\partial H}{\partial s_q}=0,\qquad
				H:=\Big(-\frac{\gamma}{M} \sum_{j=1}^{M}\frac{d[S_j]}{dt}-\big(s_p(t)+s_q(t)\big) \Big)e^{-\theta t}+\sum_{\emptyset\neq \Omega \subset \mathcal{M}}\Psi_\Omega(t)\frac{d[S_\Omega]}{dt}.
			\end{equation}
		\end{subequations}
	\end{theorem}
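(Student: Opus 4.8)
The plan is to recast the optimization~\eqref{eq:profit} as a deterministic optimal-control problem whose $2^M-1$ state variables are the probabilities $\{[S_\Omega]\}_{\emptyset\neq\Omega\subset\mathcal{M}}$ and whose control is $\mathbf{s}(t)=(s_p(t),s_q(t))\in({\mathbb R}^+)^2$, and then invoke Pontryagin's maximum principle. First I would establish the state equations~\eqref{eq:s_prime_general}. Starting from $[S_\Omega](t)=\mathbbm{P}(X_m(t)=0,\ m\in\Omega)$, I condition on the network configuration at time $t$ and use the transition rule in~\eqref{eqs:Bass-model}: as $\Delta t\to0$, node $m$ adopts in $[t,t+\Delta t]$ with probability $\lambda_m(t)\,\Delta t$, and on the event $S_\Omega$ every $k\in\Omega$ is a non-adopter, so in $\lambda_m=p_m(s_p)+\sum_k q_{k\to m}(s_q)X_k$ only the $k\in\Omega^c$ terms survive. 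Taking expectations, summing over $m\in\Omega$, and using the elementary identity $\E[X_k(t)\mathbbm{1}_{S_\Omega}]=[S_\Omega]-[S_{\Omega,k}]$ yields~\eqref{eq:s_prime_general}, with $[S_\Omega(0)]=1$ coming from $X_j(0)=0$. (If this is already available from~\cite{fibich2021diffusion} I would simply quote it.)

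Next I would put the objective into state form. Since $f(t)=\tfrac1M\sum_j\big(1-[S_j](t)\big)$, we have $\gamma\,\tfrac{df}{dt}=-\tfrac{\gamma}{M}\sum_j\tfrac{d[S_j]}{dt}$, so the profit rate~\eqref{eq:5b} equals $e^{-\theta t}\big(-\tfrac{\gamma}{M}\sum_j\tfrac{d[S_j]}{dt}-(s_p+s_q)\big)$, where each $\tfrac{d[S_j]}{dt}$ is understood — via~\eqref{eq:s_prime_general} — as a function of the states and the control. This is a classical Bolza problem with free terminal state and no endpoint cost, whose Pontryagin Hamiltonian is exactly $H$ in~\eqref{eq:c} with $\{\Psi_\Omega\}$ the adjoint variables. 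The maximum principle then delivers: the state equations $\tfrac{d[S_\Omega]}{dt}=\partial H/\partial\Psi_\Omega$, which reproduce~\eqref{eq:s_prime_general} because $H$ is affine in each $\Psi_\Omega$; the adjoint equations $\tfrac{d\Psi_\Omega}{dt}=-\partial H/\partial[S_\Omega]$; the transversality conditions $\Psi_\Omega(T)=0$, since $[S_\Omega(T)]$ is unconstrained and the running cost carries no terminal term; and the stationarity condition $\partial H/\partial s_p=\partial H/\partial s_q=0$ for an interior optimum, which is precisely~\eqref{eq:c}.

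The one computation with substance is the adjoint equation, i.e.\ tracking every occurrence of a fixed $[S_\Omega]$ in $H$. It enters only through the right-hand sides of the state equations: in the equation for $[S_\Omega]$ itself, with coefficient $-\big(p_\Omega(s_p)+\sum_{k\in\Omega^c}q_{k\to\Omega}(s_q)\big)$; in the equation for each $[S_{\Omega\setminus\{m\}}]$ with $m\in\Omega$ (through its $[S_{\Omega\setminus\{m\},m}]=[S_\Omega]$ term), with coefficient $q_{m\to\Omega\setminus\{m\}}(s_q)$, present only when $|\Omega|\ge2$; and, in the explicit profit part $-\tfrac{\gamma}{M}e^{-\theta t}\sum_j\tfrac{d[S_j]}{dt}$, through $[S_j]$ when $|\Omega|=1$ and through $[S_{\{j,k\}}]$ for the two singletons it contains when $|\Omega|=2$. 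Summing these contributions, negating, and grouping by the factor $p_\Omega(s_p)+\sum_{k\in\Omega^c}q_{k\to\Omega}(s_q)$ against the factors $q_{m\to\Omega\setminus\{m\}}(s_q)$ gives~\eqref{eq:lambda_prime_general}, the indicators $\mathbbm{1}_{|\Omega|=1}$ and $\mathbbm{1}_{|\Omega|=2}$ recording the two channels through which the profit term differentiates. I expect the main obstacle to be keeping this bookkeeping clean — in particular the $|\Omega|=2$ boundary case, where $\Omega\setminus\{m\}$ is a singleton so the profit contribution and the adjoint term $\Psi_{\Omega\setminus\{m\}}$ overlap and must be combined consistently — whereas checking the hypotheses needed for the maximum principle (piecewise-continuous controls, smooth dependence of $p_j$ and $q_{k\to j}$ on $s_p,s_q$, and boundedness of the linear state system on $[0,T]$) is routine.
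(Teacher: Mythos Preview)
Your proposal is correct and follows essentially the same route as the paper's proof: rewrite the profit rate via $f=1-\tfrac{1}{M}\sum_j[S_j]$, take the master equations~\eqref{eq:master_general} (from~\cite{fibich2021diffusion}) as state equations, and apply Pontryagin's maximum principle to obtain the Hamiltonian~\eqref{eq:c}, the adjoint system $\tfrac{d\Psi_\Omega}{dt}=-\partial H/\partial[S_\Omega]$, and the stationarity conditions. Your explicit bookkeeping for the adjoint equation---tracking the appearances of $[S_\Omega]$ in its own state equation, in the state equations for $[S_{\Omega\setminus\{m\}}]$, and in the profit term when $|\Omega|\in\{1,2\}$---is exactly what the paper leaves implicit, and is correct.
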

	\begin{proof}
		See appendix~\ref{app:thm:gen}.
	\end{proof}
	
	The optimal promotion at time~$t$ must balance its immediate contribution to the profit rate with its indirect effect on future adoption via peer influence. However, {\em as the end of the planning horizon approaches, the problem becomes effectively myopic}: with no remaining future to affect, the optimal promotion reduces to maximizing the instantaneous profit rate~$\pi$. Consequently, inducing adoption near the terminal time yields consumers who would otherwise not have adopted.
	\begin{corollary}
		\label{cor:end}
		Assume the conditions of Theorem~{\rm \ref{thm:gen}}. Let $T<\infty$. Then
		\begin{equation}
			\label{eq:argmaxTgen}
			{\bf s}^{\rm opt}(T)={\rm argmax}_{{\bf s}(T)} \pi(T),
		\end{equation}
		where $\pi$ is defined in~\eqref{eq:profit-b}.
	\end{corollary}
	\begin{proof}
		By~\eqref{eq:lambda_prime_general}, $\Psi_{\Omega}(T)=0$ for all $\Omega\in\mathcal{M}$. Hence, by~\eqref{eq:profit-b} and~\eqref{eq:c}, $H(T)=\pi(T)$, and thus ${\rm argmax}_{{\bf s}(T)} H(T)={\rm argmax}_{{\bf s}(T)} \pi(T)$.
	\end{proof}

	In theory, one could use Theorem~\ref{thm:gen} to compute the optimal promotion on any network. To do that, however, requires solving the nonlinearly-coupled system of the $2^M-1$
	boundary-value problems for $\{[S_{\Omega}],\Psi_\Omega\}$,  
	where $\Omega$ runs over  all  the nontrivial subsets of~$\mathcal{M}$. Therefore, in this manuscript we will compute the optimal promotions on networks that possess properties that enable us to reduce the number of equations to a managable size.

	\section{Optimal promotions on finite complete networks}
	\label{sec:finite}
	
	Consider a homogeneous complete network with $M$ nodes, where
	\begin{equation}
		\label{eq:complete}
		p_j(t)=p(t), \qquad q_{k\to j}(t)=\frac{q(t)}{M-1}\mathbbm{1}_{k\neq j},\qquad k,j\in\mathcal{M}.
	\end{equation}
	The adoption rate of node $j$ is
	\begin{equation*}
		\lambda_j^{\rm complete}=p(t)+\frac{q(t)}{M-1}N(t),
	\end{equation*}
	where $N(t):=\sum_{j=1}^{M}X_j(t)$ is the number of adopters at time $t$. Denote the expected adoption level on this network by $f^{\rm complete}(t; p(t), q(t),M)$. Because of the symmetry of the complete network~\eqref{eq:complete}, the non-adoption probability $[S_\Omega]$ only depends on the number of nodes in $\Omega$, and not on their identity.\,\footnote{e.g., $[S_{\{1,3\}}]=[S_{\{4,5\}}]$.} Therefore, we can denote 
	\begin{equation}
		\label{eq:comp_reduction}
		[S^n]:=\big[S_\Omega\ \Big|\ |\Omega|=n\big], \qquad n=1,\dots, M.
	\end{equation}
	This observation enables us to reduce the number of boundary-value problems in~\eqref{eqs:thm_gen} from~$2^M-1$  to~$M$:
	\begin{theorem}
		\label{lem:opt_M}
		Consider the Bass model~\eqref{eqs:Bass-model} on the homogeneous complete network~\eqref{eq:complete}. Let $\{[S^n](t)\}_{n=1}^M$ and $\{\Psi_n(t)\}_{n=1}^M$ be the solutions of the boundary-value problem 
		\begin{subequations}
			\label{eqs:pq_opt_M}
			\begin{align}
				\label{eq:S_M}
				\frac{d[S^n]}{dt}&=-\Big(np(s_p(t))+n\frac{M-n}{M-1}q(s_q(t))\Big)[S^n]+n\frac{M-n}{M-1}q(s_q(t))[S^{n+1}],\qquad &[S^n](0)=1,\\
				\label{eq:lambda_M}
				\frac{d\Psi_n}{dt}&=\Big(\Psi_n-\mathbbm{1}_{n=1}\gamma e^{-\theta t}\Big)\Big(np(s_p(t))+n\frac{M-n}{M-1}q(s_q(t))\Big) \\
				&\qquad\qquad-\left(\mathbbm{1}_{n>1}\Psi_{n-1}-\mathbbm{1}_{n=2}\gamma e^{-\theta t}\right)(n-1)\frac{M-n+1}{M-1}q(s_q(t)), \qquad &\Psi_n(T)=0.\nonumber
			\end{align}
		\end{subequations}
		Then the optimal promotion ${\bf s}^{\rm opt}:=(s_p^{\rm opt}, s_q^{\rm opt})$, see~\eqref{eq:profit}, satisfies the condition
		\begin{equation}
			\label{eq:H_comp}
			\frac{\partial H}{\partial s_p}\bigg|_{{\bf s}^{\rm opt}}=\frac{\partial H}{\partial s_q}\bigg|_{{\bf s}^{\rm opt}}=0,\qquad
			H:=\Big(-\gamma\frac{d[S]}{dt}-\big(s_p(t)+s_q(t)\big)\Big)e^{-\theta t}+\sum_{n=1}^{M}\Psi_n\frac{d[S^n]}{dt}.
		\end{equation}
	\end{theorem}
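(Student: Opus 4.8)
The plan is to obtain Theorem~\ref{lem:opt_M} as an \emph{exact reindexing} of Theorem~\ref{thm:gen}, collapsing the $2^M-1$ boundary-value problems in~\eqref{eqs:thm_gen} by exploiting the permutation symmetry of the homogeneous complete network~\eqref{eq:complete}. The first step is the symmetry reduction. Both the right-hand sides of~\eqref{eq:s_prime_general}--\eqref{eq:lambda_prime_general} and the data in~\eqref{eqs:thm_gen} are invariant under any relabeling of the nodes of $\mathcal M$: on the network~\eqref{eq:complete} the coefficients $p_\Omega$, $q_{k\to\Omega}$ and $q_{m\to\Omega\setminus\{m\}}$ depend on $\Omega$ only through $|\Omega|$, and the couplings on the right-hand sides only link $\Omega$ to $\Omega\cup\{k\}$ and to $\Omega\setminus\{m\}$. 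Since, for a fixed promotion ${\bf s}(t)$, the $[S_\Omega]$-system is a linear (hence uniquely solvable) forward initial-value problem and the $\Psi_\Omega$-system is a linear uniquely solvable backward problem forced by the already-symmetric family $\{[S_\Omega]\}$, both solutions inherit this invariance; thus $[S_\Omega]$ and $\Psi_\Omega$ depend on $\Omega$ only through $n=|\Omega|$. This legitimizes the notation $[S^n]$ of~\eqref{eq:comp_reduction}, and I will similarly introduce a reduced costate $\Psi_n:=\binom{M}{n}\,\Psi_\Omega\big|_{|\Omega|=n}$, the binomial weight being dictated by the requirement (verified at the end) that the Hamiltonian~\eqref{eq:c} rewrite cleanly as~\eqref{eq:H_comp}.

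Next I would substitute into~\eqref{eq:s_prime_general}. For $|\Omega|=n$ on the network~\eqref{eq:complete} one has $p_\Omega(s_p)=n\,p(s_p)$, $|\Omega^c|=M-n$, and $q_{k\to\Omega}(s_q)=n\,q(s_q)/(M-1)$ for every $k\in\Omega^c$, while $[S_{\Omega,k}]=[S^{n+1}]$ since $k\notin\Omega$. Summing the $M-n$ identical terms over $k\in\Omega^c$ turns~\eqref{eq:s_prime_general} into~\eqref{eq:S_M}, with $[S^n](0)=1$ immediate; this part is routine bookkeeping.

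The substitution into the costate equation~\eqref{eq:lambda_prime_general} is the delicate step and I expect it to be the main obstacle, since several combinatorial factors must conspire. For $|\Omega|=n$ one has $q_{m\to\Omega\setminus\{m\}}(s_q)=(n-1)\,q(s_q)/(M-1)$ for each of the $n$ indices $m\in\Omega$, and $\Psi_{\Omega\setminus\{m\}}=\binom{M}{n-1}^{-1}\Psi_{n-1}$. Multiplying the whole equation by $\binom{M}{n}$, the left-hand side becomes $d\Psi_n/dt$; the diagonal part $\bigl(\Psi_\Omega-\mathbbm 1_{n=1}\tfrac{\gamma}{M}e^{-\theta t}\bigr)\bigl(np(s_p)+n\tfrac{M-n}{M-1}q(s_q)\bigr)$ becomes $\bigl(\Psi_n-\mathbbm 1_{n=1}\gamma e^{-\theta t}\bigr)\bigl(np(s_p)+n\tfrac{M-n}{M-1}q(s_q)\bigr)$ because $\binom{M}{1}=M$ cancels the $1/M$; and the coupling term acquires the factor $\binom{M}{n}/\binom{M}{n-1}=(M-n+1)/n$, which combines with the $n(n-1)/(M-1)$ produced by the $n$ equal summands to give precisely $(n-1)(M-n+1)/(M-1)$, and likewise converts $\mathbbm 1_{n=2}\tfrac{\gamma}{M}e^{-\theta t}$ into $\mathbbm 1_{n=2}\gamma e^{-\theta t}$ (using $\binom{M}{2}=M(M-1)/2$). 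This yields exactly~\eqref{eq:lambda_M}, with $\Psi_n(T)=0$ from $\Psi_\Omega(T)=0$.

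Finally I would rewrite the Hamiltonian. By symmetry $\sum_{j=1}^M \tfrac{d[S_j]}{dt}=M\tfrac{d[S]}{dt}$ with $[S]=[S^1]$, so $-\tfrac{\gamma}{M}\sum_j \tfrac{d[S_j]}{dt}=-\gamma\tfrac{d[S]}{dt}$; and grouping subsets by size, $\sum_{\emptyset\neq\Omega\subset\mathcal M}\Psi_\Omega\tfrac{d[S_\Omega]}{dt}=\sum_{n=1}^M\binom{M}{n}\,\Psi_\Omega\big|_{|\Omega|=n}\,\tfrac{d[S^n]}{dt}=\sum_{n=1}^M\Psi_n\tfrac{d[S^n]}{dt}$. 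Hence the $H$ of~\eqref{eq:c} coincides with the $H$ of~\eqref{eq:H_comp} as a function of ${\bf s}$, so the first-order optimality conditions $\partial H/\partial s_p=\partial H/\partial s_q=0$ transfer verbatim, completing the proof. A small point to handle carefully along the way is to state the symmetry-reduction argument at the level of existence/uniqueness for the coupled systems, so that passing from $\{[S_\Omega],\Psi_\Omega\}$ to $\{[S^n],\Psi_n\}$ loses no information and the reduced system remains exact.
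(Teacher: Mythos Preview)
Your argument is correct, but it takes a different route from the paper's. The paper does not reduce the full costate system~\eqref{eq:lambda_prime_general} by symmetry; instead it substitutes~\eqref{eq:comp_reduction} into the master equations to obtain~\eqref{eq:S_M}, treats $\{[S^n]\}_{n=1}^M$ as the new state variables, and then \emph{re-applies} Pontryagin's principle (Theorem~\ref{thm:pontryagin}) directly to this reduced problem, deriving~\eqref{eq:lambda_M} from $\frac{d\Psi_n}{dt}=-\partial H/\partial[S^n]$ with the reduced Hamiltonian~\eqref{eq:H_comp}. You instead start from the full Pontryagin system of Theorem~\ref{thm:gen}, argue by uniqueness that $\Psi_\Omega$ also depends only on $|\Omega|$, and collapse state and costate equations simultaneously via the rescaling $\Psi_n=\binom{M}{n}\Psi_\Omega\big|_{|\Omega|=n}$. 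The paper's route is shorter and sidesteps the combinatorial bookkeeping you identify as the ``delicate step''; your route has the merit of making explicit the relation between the reduced and full costates (the binomial weight) and of exhibiting Theorem~\ref{lem:opt_M} as a literal reindexing of Theorem~\ref{thm:gen} rather than a fresh application of the maximum principle. One minor inaccuracy: the $\Psi_\Omega$-system~\eqref{eq:lambda_prime_general} is not in fact ``forced by the already-symmetric family $\{[S_\Omega]\}$''---it is self-contained once ${\bf s}(t)$ is fixed---but this does not affect your symmetry argument.
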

	\begin{proof}
		See Appendix~\ref{app:lem:opt_M}.
	\end{proof}
	
	In order to obtain explicit expressions for $s_p^{\rm opt}(t)$ and $s_q^{\rm opt}(t)$, we must specify functional forms for $p(s_p)$ and $q(s_q)$. It is natural to assume that these functions are smooth, monotonically-increasing, and concave. Accordingly, following~\cite{Fruchter-22}, we set
	\begin{equation}
		\label{eq:p_q_s}
		p(s_p):=p_0+b_p\sqrt{s_p}, \qquad q(s_q):=q_0+b_q\sqrt{s_q}, \qquad s_p,s_q\geq 0.
	\end{equation}
	In this formulation, $b_p\sqrt{s_p}$ captures the effect of promotional spending on the external influence rate, whereas $b_q\sqrt{s_q}$ reflects how such spending enhances either the receptivity of non-adopters to peer influence or the contagiousness of adopters. The robustness of the square-root specification for the dependence of~$p$ and~$q$ on promotional spending is examined in Section~\ref{sec:Alternative-models}.

	\begin{corollary}
		\label{cor:opt_complete_M}
		Assume the conditions of Theorem~{\rm \ref{lem:opt_M}}. Let the effect of the promotion be given by~\eqref{eq:p_q_s}. Then equation~\eqref{eq:H_comp} reduces to
		\begin{equation}
			\label{eq:s_lambda_pq_M}
			\begin{aligned}
				s^{\rm opt}_p(t)&=\frac{b_p^2}{4}\Big(\gamma[S]-e^{\theta t}\sum_{n=1}^{M}\Psi_n [S^n]\Big)^2,\\
				s^{\rm opt}_q(t)&=\frac{b_q^2}{4}\Big(\gamma\left([S]-[S^2]\right)-e^{\theta t}\sum_{n=1}^{M-1}\Psi_n n \frac{M-n}{M-1}\big([S^n]-[S^{n+1}]\big)\Big)^2.
			\end{aligned}
		\end{equation}
	\end{corollary}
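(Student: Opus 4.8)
The plan is to insert the explicit promotion response \eqref{eq:p_q_s} into the stationarity condition \eqref{eq:H_comp} of Theorem~\ref{lem:opt_M} and solve the two resulting algebraic equations for $s_p$ and $s_q$. The first step is to make the dependence of $H$ on the controls explicit. On the homogeneous complete network $f=1-[S^1]$, so $[S]=[S^1]$ in \eqref{eq:H_comp}, and substituting the state equations \eqref{eq:S_M} for $d[S^n]/dt$ (and for $d[S]/dt=d[S^1]/dt$) rewrites $H$ as an affine function of $p(s_p(t))$ and of $q(s_q(t))$, minus $e^{-\theta t}\bigl(s_p(t)+s_q(t)\bigr)$, with all other coefficients built from $\{[S^n]\}$, $\{\Psi_n\}$, $\gamma$ and $e^{-\theta t}$. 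Some care is needed in the index bookkeeping: in $\sum_{n=1}^{M}\Psi_n\,d[S^n]/dt$ the term $[S^{M+1}]$ never appears because its coefficient $M\tfrac{M-M}{M-1}$ vanishes, and after collecting terms the factor multiplying $q(s_q(t))$ is a sum running only over $n=1,\dots,M-1$, while the factor multiplying $p(s_p(t))$ runs over $n=1,\dots,M$.

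The second step is differentiation and inversion. Since $p(s_p)=p_0+b_p\sqrt{s_p}$ and $q(s_q)=q_0+b_q\sqrt{s_q}$, we have $\partial p/\partial s_p=b_p/(2\sqrt{s_p})$ and $\partial q/\partial s_q=b_q/(2\sqrt{s_q})$, so each of $\partial H/\partial s_p$ and $\partial H/\partial s_q$ is of the form $(\text{coefficient})/(2\sqrt{s})-e^{-\theta t}$, where the coefficient is exactly the combination of $[S^n]$'s and $\Psi_n$'s that multiplies $p(s_p)$, resp.\ $q(s_q)$, in the expanded $H$. Setting $\partial H/\partial s_p=\partial H/\partial s_q=0$ and solving for $\sqrt{s_p}$ and $\sqrt{s_q}$ gives $\sqrt{s_p^{\rm opt}}=\tfrac{b_p}{2}(\,\cdot\,)$ and $\sqrt{s_q^{\rm opt}}=\tfrac{b_q}{2}(\,\cdot\,)$, where (after factoring out $e^{\theta t}$) the brackets have the structure displayed in \eqref{eq:s_lambda_pq_M}: the $\gamma[S]$ and $\gamma([S]-[S^2])$ pieces come from the profit part $-\gamma\,d[S]/dt$ of $H$, and the $\Psi$-weighted sums come from the $\sum_n\Psi_n\,d[S^n]/dt$ part. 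Squaring then yields the stated formulas.

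Finally, one should confirm that this stationary point is the maximizer and not merely a critical point: for fixed $t$ and fixed $\{[S^n]\},\{\Psi_n\}$, the map $s_p\mapsto H$ is of the form $c_1\sqrt{s_p}-e^{-\theta t}s_p+(\text{const})$, hence strictly concave on $s_p>0$ (and likewise in $s_q$), so the first-order condition is also sufficient; when the relevant bracket is non-positive, $\partial H/\partial s$ stays negative for all $s>0$ and the maximum sits at the boundary $s=0$, consistent with reading \eqref{eq:s_lambda_pq_M} as the squared positive part. The computation is otherwise routine; the only real obstacle is the bookkeeping of the first two steps — cleanly separating, after substituting \eqref{eq:S_M}, the part of $H$ that depends on $s_p$ from the part that depends on $s_q$ and keeping the index ranges of the emerging sums straight — together with the minor concavity/boundary check that makes the stationarity condition genuinely characterize ${\bf s}^{\rm opt}$.
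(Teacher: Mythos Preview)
Your proposal is correct and follows essentially the same route as the paper: substitute the state equations \eqref{eq:S_M} and the response functions \eqref{eq:p_q_s} into the Hamiltonian \eqref{eq:H_comp}, then solve the two first-order conditions $\partial H/\partial s_p=\partial H/\partial s_q=0$ for $\sqrt{s_p}$ and $\sqrt{s_q}$. Your additional observations --- that the $n=M$ term drops out of the $q$-sum because $n\tfrac{M-n}{M-1}$ vanishes, and that strict concavity of $H$ in each control upgrades the stationarity condition to a genuine maximum (with the boundary case $s=0$ covered by reading the bracket as a positive part) --- go slightly beyond what the paper spells out but are entirely in the same spirit.
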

	\begin{proof}
		See Appendix~\ref{app:cor:opt_complete_M}.
	\end{proof}
	
	Figure~\ref{fig:pq_opt_M2} presents the numerical solution of~(\ref{eqs:pq_opt_M},~\ref{eq:s_lambda_pq_M}) for a small population ($M=3$) and an infinite planning horizon $\left(T=\infty\right)$.\footnote{See Appendix~\ref{app:numerical} for the numerical methods used in this paper.} The ``external promotion'' $s_p^{\rm opt}(t)$ starts from a positive value and decreases. This is to be expected, because as more individuals adopt the product, there are less potential adopters, and so $s_p^{\rm opt}(t)$ becomes less effective. In contrast, the ``internal promotion'' $s_q^{\rm opt}(t)$ starts from zero, increases to a global maximum, and then decreases. 
	Intuitively, $s_q^{\rm opt}(0)=0$, since there are no adopters at $t=0$, and so a promotion in~$q$ does not boost peer effects. As the number of adopters increases, $s_q^{\rm opt}(t)$ becomes more effective in boosting adoptions, and so $s_q^{\rm opt}(t)$ increases. As the number of adopters further increases, however, there are fewer non-adopters, and so $s_q^{\rm opt}(t)$ becomes less effective. Therefore, $s_q^{\rm opt}(t)$ decreases. 
	
	The optimal promotion leads to a considerable increase of the adoption level, that is, $f^{\rm opt}(t)$ is considerably higher than $f^{0}(t):=f(t; {\bf s}\equiv 0)$, see Figure~\ref{fig:pq_opt_M2}B. This increase translates into a relative increase of the overall profits of $\Delta\Pi^{\rm opt}_{\rm rel} \approx 12\%$, see~\eqref{eq:delta_pi}. 
	
	As the population size is increased, the qualitative dynamics of $s_p(t)$ and of $s_q(t)$ remain unchanged, namely, $s_p(t)$ decreases and $s_q(t)$ increases from zero to a global maximum and then decreases~(Figure~\ref{fig:pq_opt_M}). However, the relative magnitudes of $s_p(t)$ and $s_q(t)$ change. Thus, when $M=3$, $s_q(t)$ remains smaller than $s_p(t)$ at all times, since there are few non-adopters for peer effects to have a significant impact. As $M$ increases, $s_q(t)$ becomes higher than $s_p(t)$ as soon as enough individuals adopt the product, as there are still numerous non-adopters for the  promotion in~$q$ to be effective. 
	
	The extra profits due to the optimal promotion {\em decrease} with~$M$, from $\Delta\Pi^{\rm opt}_{\rm rel}\approx 14\%$ for $M=2$ to $\Delta\Pi^{\rm opt}_{\rm rel}\approx 8.6\%$ for $M=100$ (Figure~\ref{fig:pq_opt_M}E). Since in the absence of promotions the expected adoption level $f^{\rm complete}(t; M)$ increases with $M$, see~\cite{Bass-monotone-convergence-23}, 
	this is a manifestation of the ``principle'' that promotions are less effective on ``faster'' networks (Section~\ref{sec:Emerging-picture}).

	\begin{figure}[ht!]
		\centering
		%\scalebox{0.71}{\includegraphics{M3_opt.eps}}
		\scalebox{0.65}{\includegraphics{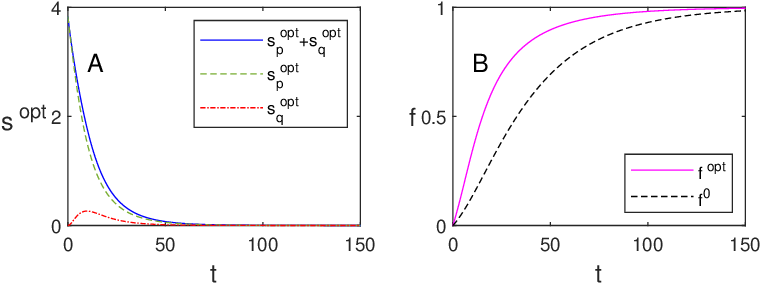}}
		\caption{A) The optimal promotion on a complete homogeneous network with $M=3$ nodes. The dash, dash-dot, and solid lines are $s_p^{\rm opt}(t)$, $s_q^{\rm opt}(t)$, and $s_p^{\rm opt}(t)+s_q^{\rm opt}(t)$, respectively. B) The expected adoption levels $f^{\rm opt}(t)$ in the presence of the optimal promotion (solid) and $f^{0}(t)$ in the absence of a promotion (dashes). Here $p_0=0.01$, $b_p=0.01$, $q_0=0.1$, $b_q=0.1$, $\gamma=1000$, $\theta=0.01$, and $T=\infty$. Here, $\Delta\Pi^{\rm opt}_{\rm rel}\approx 12\%$.} 
		\label{fig:pq_opt_M2}
	\end{figure}

	\begin{figure}[ht!]
		\centering
		\scalebox{0.65}{\includegraphics{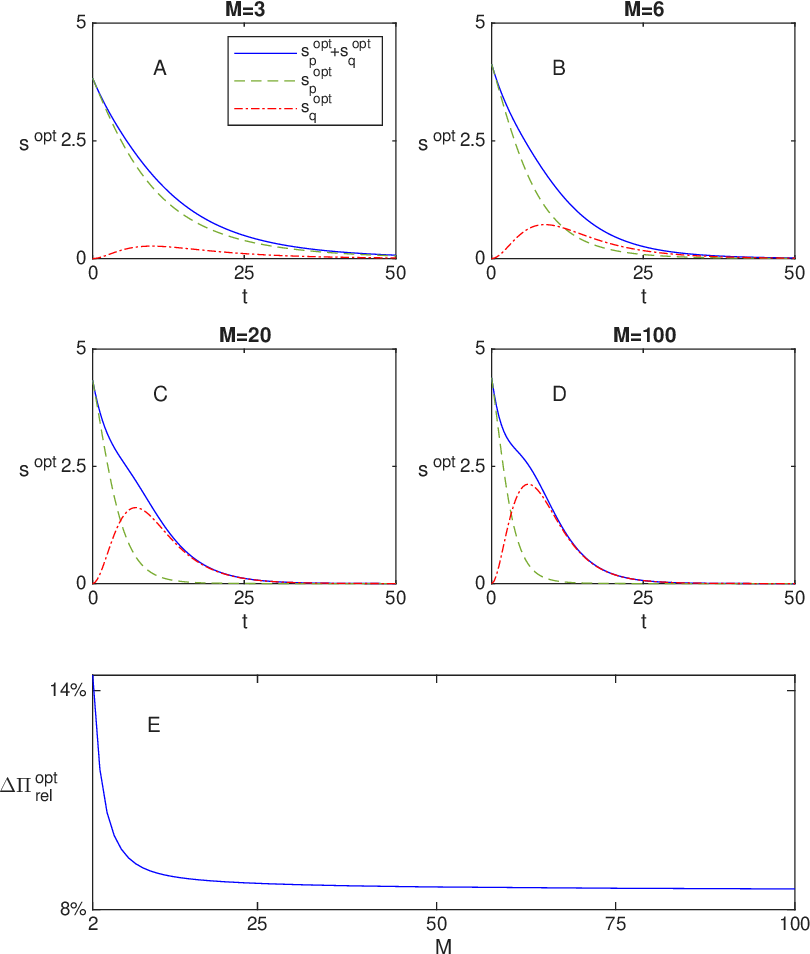}}
		\caption{A-D)~Same as Figure~\ref{fig:pq_opt_M2}A, for various values of $M$. E)~The dependence of $\Delta\Pi^{\rm opt}_{\rm rel}$ on $M$.}
		\label{fig:pq_opt_M}
	\end{figure}

	\section{Optimal promotions on infinite complete networks}
	\label{sec:infinite}
	
	In Section~\ref{sec:gen}, we showed that computing the optimal promotion on general networks is challenging. In Section~\ref{sec:finite}, we demonstrated that, for complete homogeneous networks with $M$~nodes, the optimal promotion is characterized by a system of $M$~coupled boundary-value problems. We now show that, for infinite complete networks, the problem can be reduced, without any approximation, to a single boundary-value problem involving only two equations. This reduction enables not only efficient numerical computation of the optimal strategy but also a deeper analytical understanding of its properties. To this end, we first recall the following convergence result.\footnote{Niu~\cite{Niu-02} established the convergence of~$f^{\rm complete}$ to $f^{\rm compart}$ under the assumption that the rate parameters~$p$ and~$q$ are constant over time. In the context of optimal promotions, however, it is necessary to extend this result to the case where these parameters vary over time~\cite{Bass-monotone-convergence-23}.}
	
	\begin{theorem}[\cite{Bass-monotone-convergence-23}]
		\label{thm:complete_convergence}
		Let $f^{\rm complete}(t;p(t),q(t),M)$ denote the expected adoption level in the Bass model~\eqref{eqs:Bass-model} on the complete network~\eqref{eq:complete} with time-dependent parameters
		$p(t)$ and~$q(t)$. Then $\lim\limits_{M\to\infty}f^{\rm complete}(t;p(t),q(t),M)=f^{\rm compart}(t; p(t),q(t))$, where $f^{\rm compart}(t; p(t),q(t))$ is the solution of the compartmental Bass model with time-dependent parameters
		\begin{equation}
			\label{eq:complete_limit}
			\frac{df}{dt}=(1-f)\Big(p(t)+q(t)f\Big), \qquad f(0)=0.
		\end{equation}
	\end{theorem}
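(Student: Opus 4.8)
\emph{Proof sketch.}\ The plan is to work with the symmetry-reduced master equations for the adoption dynamics on the complete network, which are the $[S^n]$-part of~\eqref{eqs:pq_opt_M}: with the promotion absorbed into the rates (reading $p(t)$ for $p(s_p(t))$ and $q(t)$ for $q(s_q(t))$ in~\eqref{eq:S_M}), the non-adoption probabilities $\{[S^n](t;M)\}_{n=1}^M$ solve
\begin{equation}
	\label{eq:Sn_base}
	\frac{d[S^n]}{dt}=-\Big(np(t)+n\tfrac{M-n}{M-1}\,q(t)\Big)[S^n]+n\tfrac{M-n}{M-1}\,q(t)\,[S^{n+1}],\qquad [S^n](0)=1,
\end{equation}
with the convention $[S^{M+1}]\equiv 0$, and $f^{\rm complete}(t;p(t),q(t),M)=1-[S^1](t;M)$. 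Let $g:=1-f^{\rm compart}$, so that $g(0)=1$ and $g'=-\big(p(t)+q(t)(1-g)\big)g$ by~\eqref{eq:complete_limit}; a one-line computation shows that the powers $\sigma_n:=g^n$ satisfy the \emph{infinite} hierarchy obtained from~\eqref{eq:Sn_base} by replacing the finite-size factor $\tfrac{M-n}{M-1}$ with $1$,
\begin{equation}
	\label{eq:Sn_lim}
	\frac{d\sigma_n}{dt}=-n\big(p(t)+q(t)\big)\sigma_n+n\,q(t)\,\sigma_{n+1},\qquad \sigma_n(0)=1,\qquad n\ge 1.
\end{equation}
Since $\tfrac{M-n}{M-1}\to1$ for each fixed $n$, this already identifies the limit formally, so it suffices to show $[S^n](t;M)\to g(t)^n$ uniformly on compact $t$-intervals for each fixed $n$ --- the case $n=1$ being the assertion.

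I would make this rigorous with a direct estimate on $e_n(t):=[S^n](t;M)-g(t)^n$. Subtracting~\eqref{eq:Sn_lim} from~\eqref{eq:Sn_base} and using $1-\tfrac{M-n}{M-1}=\tfrac{n-1}{M-1}$, the function $e_n$ solves a linear equation with $e_n(0)=0$ whose inhomogeneity is the single term $n\,q(t)\tfrac{n-1}{M-1}\,g^n(1-g)$, of size $O(n^2/M)$, and whose only coupling is the upward term $n\tfrac{M-n}{M-1}q(t)\,e_{n+1}$. Writing the equation in Duhamel form, using $|e_n|\le1$ and the boundedness of $p,q$ on $[0,T]$ (put $\bar q:=\sup_{[0,T]}q$), and iterating $k$ times: the accumulated coefficient is $n(n+1)\cdots(n+k-1)=\tfrac{(n+k-1)!}{(n-1)!}$, but the $k$ nested time integrals also contribute $t^k/k!$, so the $k$-th remainder carries $\binom{n+k-1}{k}(\bar q t)^k$, which is polynomial in $k$ for fixed $n$ and hence tends to $0$ as $k\to\infty$ whenever $\bar q t<1$; bootstrapping over consecutive short subintervals extends this to all of $[0,T]$. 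The surviving geometric-type series of inhomogeneous contributions is then $O(1/M)$, giving in fact $\sup_{[0,T]}\big|[S^n](\cdot;M)-g^n\big|=O(1/M)$, and for $n=1$ this reads $\sup_{[0,T]}\big|f^{\rm complete}(\cdot;M)-f^{\rm compart}\big|=O(1/M)$.

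The step I expect to be the main obstacle is precisely the control of this infinite, non-closing hierarchy: the multiplicity of the upward coupling in~\eqref{eq:Sn_base} grows like $n$, so neither Picard--Lindel\"of nor a naive Gronwall inequality applies directly, and the estimate hinges on the observation that those $n$-factors telescope against the $1/k!$ from the iterated time integrals into slowly growing binomial coefficients. Two alternatives are worth keeping in mind. A more modular analytic argument drops the rate: the bounds $0\le[S^n]\le1$ make $\{[S^n](\cdot;M)\}_M$ equi-Lipschitz on $[0,T]$, so by Arzel\`a--Ascoli and a diagonal extraction every subsequence has a further subsequence converging, simultaneously in $n$, to a solution of~\eqref{eq:Sn_lim}; one then needs uniqueness for~\eqref{eq:Sn_lim} within $|\sigma_n|\le1$ --- obtainable either by the same telescoping iteration, or by noting that the generating function $G(x,t):=\sum_{n\ge1}\sigma_n(t)x^n$ solves a linear first-order transport equation with initial datum $x/(1-x)$ and is therefore unique --- whence every subsequential limit equals $g^n$ and the whole sequence converges. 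A probabilistic route writes $[S^1](t;M)=\E\big[\exp\!\big(-\!\int_0^t(p(s)+\tfrac{q(s)}{M-1}N_{-1}(s))\,ds\big)\big]$, where $N_{-1}$ is the adopter count among $M-1$ of the nodes under the dynamics with node $1$ frozen at $0$; this $N_{-1}$ is a density-dependent Markov jump process with rate $\big(M-1-N_{-1}\big)\big(p(t)+\tfrac{q(t)}{M-1}N_{-1}\big)=(M-1)\big(1-\tfrac{N_{-1}}{M-1}\big)\big(p(t)+q(t)\tfrac{N_{-1}}{M-1}\big)$, so Kurtz's law of large numbers gives $N_{-1}/(M-1)\to f^{\rm compart}$ uniformly in probability, and bounded convergence yields $[S^1](t;M)\to e^{-\int_0^t(p+qf^{\rm compart})}=1-f^{\rm compart}(t)$; there the obstacle is the time-inhomogeneity of the rates, which lies just outside the textbook form of Kurtz's theorem but is a routine extension.
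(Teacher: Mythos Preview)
The paper does not prove this theorem; it is quoted from~\cite{Bass-monotone-convergence-23} and used as a black box. So there is no ``paper's own proof'' to compare against here, and your task reduces to giving a self-contained argument.

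Your main line---subtract the finite hierarchy~\eqref{eq:Sn_base} from the limiting hierarchy~\eqref{eq:Sn_lim}, write Duhamel for the errors $e_n=[S^n]-g^n$, and iterate upward---is sound. The key computation, that the forcing term is exactly $n\,q(t)\tfrac{n-1}{M-1}\,g^n(1-g)$, is correct, and your observation that the $n(n+1)\cdots(n+k-1)$ from the couplings telescopes against the $t^k/k!$ from the nested integrals into $\binom{n+k-1}{k}(\bar q t)^k$ is the right mechanism for controlling the non-closing hierarchy. Two points deserve a sentence each when you write this out in full. First, the iteration terminates at level $M$ because the coupling coefficient $\tfrac{M-n}{M-1}$ vanishes there, so you are really taking $k=M-n$ and letting $M\to\infty$; for fixed $n$ this still drives the remainder to zero since $\binom{M-1}{n-1}$ grows only polynomially in $M$ while $(\bar q t)^{M-n}$ decays geometrically. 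Second, the bootstrap over subintervals requires propagating an $O(1/M)$ error in the initial data through the same estimate; this is routine but should be stated.

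Your two alternatives are also correct. The Arzel\`a--Ascoli route is clean once you have uniqueness for~\eqref{eq:Sn_lim} in the class $|\sigma_n|\le 1$, and the generating-function transport equation is the most direct way to get it. The probabilistic route via Kurtz is valid; the representation $[S^1]=\E\big[\exp(-\int_0^t(p+\tfrac{q}{M-1}N_{-1}))\big]$ with $N_{-1}$ taken under the node-1-frozen dynamics follows from the graphical construction (node~1's incoming Poisson clocks are independent of the frozen process), and the time-inhomogeneous extension of Kurtz's theorem is indeed standard. Given the title of the cited reference, the proof there presumably exploits monotonicity of $f^{\rm complete}(\cdot;M)$ in $M$, which is a different organizing principle from any of your three approaches; your direct-estimate route has the advantage of giving an explicit $O(1/M)$ rate.
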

	The reduction of the infinite system of master equations to a single ordinary differential equation enables us to express the optimal promotion as the solution of a single boundary-value problem:
	\begin{theorem}
		\label{lem:opt_complete_pq}
		Consider the Bass model~\eqref{eqs:Bass-model} on the homogeneous complete network~\eqref{eq:complete} as $M\to\infty$. Then the optimal promotion ${\bf s}^{\rm opt}:=(s_p^{\rm opt}, s_q^{\rm opt})$, see~\eqref{eq:profit}, satisfies the equation
		\begin{equation}
			\label{eq:H_inf}
			\frac{\partial H}{\partial s_p}=\frac{\partial H}{\partial s_q}=0, \qquad
			H:=\Big(\gamma \frac{df}{dt}-s_p(t)-s_q(t)\Big)e^{-\theta t}+\Psi\frac{df}{dt},
		\end{equation}
		where $f(t)$ and $\Psi(t)$ are the solutions of the boundary-value problem 
		\begin{equation}
			\label{eq:bvp_pq}
			\begin{aligned}
				\frac{df}{dt}&=(1-f)\Big(p(s_p(t))+q(s_q(t))f\Big),\qquad &f(0)=0,\\
				\frac{d\Psi}{dt}&=(\gamma e^{-\theta t}+\Psi)\Big(p(s_p(t))+q(s_q(t))(2f-1)\Big),\qquad &\Psi(T)=0.
			\end{aligned}
		\end{equation}
	\end{theorem}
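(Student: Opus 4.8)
The plan is to use Theorem~\ref{thm:complete_convergence} to collapse the stochastic control problem~\eqref{eq:profit} on the infinite complete network into a classical deterministic optimal control problem with a single state variable, and then to apply Pontryagin's maximum principle~\cite{kamienschwartz}.

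First I would observe that on the complete homogeneous network the expected adoption level is $f(t)=1-[S^1](t)=1-[S](t)$, so that $\gamma\,df/dt=-\gamma\,d[S]/dt$ matches the first term of the Hamiltonian~\eqref{eq:H_comp}. Applying Theorem~\ref{thm:complete_convergence} with the piecewise-continuous time-dependent parameters $p(s_p(t))$ and $q(s_q(t))$, as $M\to\infty$ this $f$ converges to the solution of $df/dt=(1-f)\big(p(s_p(t))+q(s_q(t))f\big)$, $f(0)=0$, which is the first line of~\eqref{eq:bvp_pq}. Hence the profit functional~\eqref{eq:5b}--\eqref{eq:frac_adopters} on the infinite complete network equals $\Pi[{\bf s}]=\int_0^T e^{-\theta t}\big(\gamma\,df/dt-s_p(t)-s_q(t)\big)\,dt$ with this $f$, and~\eqref{eq:profit} becomes the optimal control problem with state $f$, controls $(s_p,s_q)\in{\mathcal S}^2$, state equation $df/dt=g(f,s_p,s_q):=(1-f)\big(p(s_p)+q(s_q)f\big)$, $f(0)=0$, and running payoff $e^{-\theta t}\big(\gamma g-s_p-s_q\big)$.

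Next I would apply the maximum principle to this problem. With the running payoff written as $e^{-\theta t}(\gamma g-s_p-s_q)$ and $g$ the right-hand side of the state equation, the Hamiltonian with costate $\Psi$ is exactly $H$ in~\eqref{eq:H_inf}. The costate equation is $d\Psi/dt=-\partial H/\partial f$; since $\partial g/\partial f=-\big(p(s_p)+q(s_q)(2f-1)\big)$ this gives $\partial H/\partial f=-\big(\gamma e^{-\theta t}+\Psi\big)\big(p(s_p)+q(s_q)(2f-1)\big)$, i.e.\ the second line of~\eqref{eq:bvp_pq}; and since $f(T)$ is free and the payoff carries no terminal term, the transversality condition is $\Psi(T)=0$. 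Together with the state equation and $f(0)=0$ this is the boundary-value problem~\eqref{eq:bvp_pq}. The maximum principle also yields ${\bf s}^{\rm opt}(t)\in{\rm argmax}_{{\bf s}\in{\mathcal S}^2}H$ pointwise in $t$; since $0\le f\le1$ and a variation-of-constants argument on the linear costate equation gives $\gamma e^{-\theta t}+\Psi>0$ on $[0,T]$, the coefficients $(\gamma e^{-\theta t}+\Psi)(1-f)$ and $(\gamma e^{-\theta t}+\Psi)(1-f)f$ of $p(s_p)$ and $q(s_q)$ in $H$ are nonnegative, so $H$ is concave in $(s_p,s_q)$ (a sum of concave functions and a linear one), and the pointwise maximization reduces to the first-order condition~\eqref{eq:H_inf} whenever the maximizer lies in the interior.

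The step I expect to require the most care is not the maximum-principle computation, which is routine, but the two ``soft'' points. The first is the precise meaning of ``$M\to\infty$'': either one takes the infinite complete network to be defined by the limiting compartmental dynamics of Theorem~\ref{thm:complete_convergence} --- in which case there is nothing further to justify --- or one must show that the optimum itself passes to the limit, i.e.\ that ${\rm argmax}$ commutes with $M\to\infty$ and not merely $f^{\rm complete}\to f^{\rm compart}$ for each fixed ${\bf s}$, which needs a uniform-convergence argument on ${\mathcal S}^2$ together with existence of an optimal control. The second is verifying that the optimal controls are interior, so that~\eqref{eq:H_inf} holds with equality: for the response functions~\eqref{eq:p_q_s}, $p'(s_p)\to\infty$ as $s_p\to0^+$ forces $s_p^{\rm opt}(t)>0$, and likewise $s_q^{\rm opt}(t)>0$, at every $t$ where the corresponding coefficient above is strictly positive, while at $t=0$ that coefficient for $q$ vanishes and one recovers $s_q^{\rm opt}(0)=0$; for the response functions~\eqref{eq:p_q_s_ln} the controls may genuinely hit the constraint $s\ge0$, and~\eqref{eq:H_inf} should then be read together with the nonnegative truncation, as in~\eqref{eq:s_lambda_pq_M_ln}. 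Alternatively one could prove the theorem by passing to the limit directly in Theorem~\ref{lem:opt_M}, using $[S^n]\to(1-f)^n$ and identifying the limit of $\sum_n\Psi_n\,d[S^n]/dt$ with $\Psi\,df/dt$, but reconciling the costate variables this way is messier than re-deriving the conditions from the compartmental limit.
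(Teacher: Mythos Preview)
Your approach is essentially identical to the paper's: invoke Theorem~\ref{thm:complete_convergence} to reduce the $M\to\infty$ problem to the single-state compartmental dynamics~\eqref{eq:complete_limit}, then apply Pontryagin's principle (Theorem~\ref{thm:pontryagin}) directly to that one-state problem to read off $H$, the costate equation $d\Psi/dt=-\partial H/\partial f$, and the stationarity condition~\eqref{eq:H_inf}. The paper's proof in Appendix~\ref{app:lem:opt_complete_pq} does exactly this computation and nothing more; your additional remarks about concavity, interiority of the controls, and whether ${\rm argmax}$ commutes with $M\to\infty$ are legitimate concerns that the paper simply does not address---it implicitly adopts your first interpretation, taking the compartmental dynamics as the definition of the infinite network.
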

	\begin{proof}
		See Appendix~\ref{app:lem:opt_complete_pq}.
	\end{proof}
	
	%	{\bf AMIT REMEMBER TO DISCUSS REMOVING}
	%	\begin{corollary}
		%		\label{cor:unique_comp}
		%		Assume the conditions of Theorem~{\rm \ref{lem:opt_complete_pq}}. Let $p(s_p)$ and $q(s_q)$ be strictly concave. Then the optimal promotion ${\bf s}^{\rm opt}$ is unique.
		%	\end{corollary}
	%	\begin{proof}
		%		The Hamiltonian~\eqref{eq:H_inf} for the optimal promotion can be rewritten as
		%		\begin{equation*}
			%			H=(\gamma e^{-\theta t}+\Psi)\frac{df}{dt}-(s_p(t)+s_q(t))e^{-\theta t}.
			%		\end{equation*}
		%		Since $p(s_p)$ and $q(s_q)$ are strictly concave, by~\eqref{eq:bvp_pq}, $\frac{df}{dt}$ is strictly concave in $s_p$ and $s_q$. Hence, $H$ is strictly concave in $s_p$ and $s_q$ so long as $\Psi(t)>-\gamma e^{-\theta t}$. Assume by contradiction that there exists $t_1$ such that $\Psi(t_1)\leq -\gamma e^{-\theta t}$. Let $A(t):=\Big(p(s_p(t))+q(s_q(t))(2f-1)\Big)$. If $f(t_1)\geq \frac{1}{2}$ then $A(t_1)>0$. Hence, $\frac{d\Psi}{dt}<0$, which contradicts $\Psi(T)=0$. If $f(t_1)<\frac{1}{2}$, then $\Psi(t)$ must cross the curve $-\gamma e^{-\theta t}$ before $f(t)=\frac{1}{2}$. However, if $\Psi(t)=-\gamma e^{-\theta t}$, then $\frac{d\Psi}{dt}=0$, which is a contradiction. 
		%		
		%		Therefore, $H$ is strictly concave in $s_p$ and $s_q$, so the optimal promotion is unique.
		%	\end{proof}
	
	\begin{corollary}
		\label{cor:opt_complete_pq}
		Assume the conditions of Theorem~{\rm \ref{lem:opt_complete_pq}}. Let the effect of the promotion be given by~\eqref{eq:p_q_s}. Then equation~\eqref{eq:H_inf} reduces to
		\begin{equation}
			\label{eq:s_lambda_pq}
			s^{\rm opt}_p(t)=\frac{b_p^2}{4}\left((1-f)(\Psi e^{\theta t}+\gamma)\right)^2 \!,\qquad
			s^{\rm opt}_q(t)=\frac{b_q^2}{b_p^2}f^2s^{\rm opt}_p(t).
		\end{equation}
	\end{corollary}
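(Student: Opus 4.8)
The plan is a direct computation. In the first-order conditions \eqref{eq:H_inf}, the functions $f(t)$ and $\Psi(t)$ of \eqref{eq:bvp_pq} are the state and co-state and are held fixed while $H$ is differentiated with respect to the controls. The only control-dependence in $H$ enters through the linear term $-(s_p+s_q)e^{-\theta t}$ and through $\frac{df}{dt}=(1-f)\big(p(s_p)+q(s_q)f\big)$, so by the chain rule
\[
\frac{\partial}{\partial s_p}\frac{df}{dt}=(1-f)\,p'(s_p),\qquad
\frac{\partial}{\partial s_q}\frac{df}{dt}=(1-f)f\,q'(s_q),
\]
and from \eqref{eq:p_q_s} one has $p'(s_p)=\tfrac{b_p}{2\sqrt{s_p}}$ and $q'(s_q)=\tfrac{b_q}{2\sqrt{s_q}}$. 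Substituting into \eqref{eq:H_inf}, the equation $\partial H/\partial s_p=0$ becomes $(1-f)\tfrac{b_p}{2\sqrt{s_p}}\big(\gamma e^{-\theta t}+\Psi\big)=e^{-\theta t}$; I solve for $\sqrt{s_p}$, square, and multiply through by $e^{\theta t}$ to get $s_p^{\rm opt}=\tfrac{b_p^2}{4}\big((1-f)(\Psi e^{\theta t}+\gamma)\big)^2$, the first identity in \eqref{eq:s_lambda_pq}.

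The equation $\partial H/\partial s_q=0$ is identical except for the extra factor $f$, giving $s_q^{\rm opt}=\tfrac{b_q^2}{4}\big(f(1-f)(\Psi e^{\theta t}+\gamma)\big)^2$. Dividing this by the expression just obtained for $s_p^{\rm opt}$ cancels the common factor $(1-f)^2(\Psi e^{\theta t}+\gamma)^2$ and yields $s_q^{\rm opt}=\tfrac{b_q^2}{b_p^2}f^2\,s_p^{\rm opt}$, which is the second identity in \eqref{eq:s_lambda_pq}.

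The only step I expect to require a little care — the main (and really the sole non-mechanical) obstacle — is confirming that the stationary point furnished by \eqref{eq:H_inf} is genuinely the maximizer of $H$ over the admissible set $s_p,s_q\ge 0$. Since $\sqrt{\cdot}$ is concave and appears in $H$ multiplied by the coefficient $\mu:=\gamma e^{-\theta t}+\Psi$, which is nonnegative on $[0,T]$ (for instance $\mu(T)=\gamma e^{-\theta T}>0$ and $\mu$ solves a linear ODE read off from \eqref{eq:bvp_pq}, from which $\mu\ge 0$ propagates backward in time; alternatively this is inherited from the optimality analysis in Appendix~\ref{app:lem:opt_complete_pq}), the map $H$ is concave in each control, so an interior critical point is the global maximum. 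Moreover $p'(s_p),q'(s_q)\to\infty$ as the control tends to $0^+$, so $\partial H/\partial s_p$ and $\partial H/\partial s_q$ are positive near the boundary and the maximum is interior whenever $1-f>0$ and $f>0$; the degenerate instants where $f(t)=0$ (namely $t=0$) give $s_q^{\rm opt}=0$, which is consistent with the stated formulas because they carry the factor $f^2$. This establishes that \eqref{eq:s_lambda_pq} characterizes $s_p^{\rm opt}$ and $s_q^{\rm opt}$.
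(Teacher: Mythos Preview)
Your proposal is correct and follows essentially the same direct computation as the paper's proof: substitute \eqref{eq:p_q_s} and \eqref{eq:bvp_pq} into \eqref{eq:H_inf}, write out $\partial H/\partial s_p=0$ and $\partial H/\partial s_q=0$, and solve for $s_p$ and $s_q$. Your additional paragraph verifying concavity of $H$ in the controls (via nonnegativity of $\gamma e^{-\theta t}+\Psi$) goes beyond what the paper's proof records, but it is a sound and welcome check.
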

	\begin{proof}
		See Appendix~\ref{app:cor:opt_complete_pq}.
	\end{proof}
	As could be expected from Theorem~\ref{thm:complete_convergence}, the optimal promotion on infinite complete networks~(Figure~\ref{fig:pq_opt}A) is close to that on a complete network with $M=100$ nodes (Figure~\ref{fig:pq_opt_M}D).
	
	\begin{figure}[ht!]
		\centering
		\scalebox{0.65}{\includegraphics{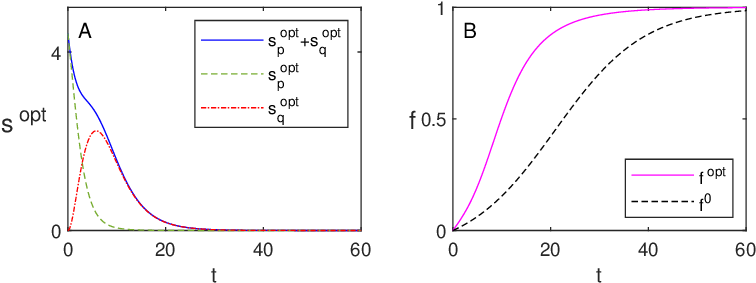}}
		\caption{Same as Figure~\ref{fig:pq_opt_M2} for an infinite complete network. Here, $\Delta\Pi^{\rm opt}_{\rm rel}=8.5\%$.}
		\label{fig:pq_opt}
	\end{figure}

	\subsection{Initial and final stages}
	\label{sec:init}
	
	We can analyze the behavior of $s_p^{\rm opt}(t)$ and $s_q^{\rm opt}(t)$ at the beginning of the promotion:
	\begin{corollary}
		\label{cor:sp_prime_zero}
		Assume the conditions of Corollary~{\rm \ref{cor:opt_complete_pq}}. Assume also that $q_0>\theta$. Then 
		$$
		s_p^{\rm opt}(0)>0, \qquad \frac{d}{dt}s_p^{\rm opt}(0)<0.
		$$
	\end{corollary}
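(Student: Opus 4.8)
The plan is to introduce the auxiliary function $\Phi(t):=\gamma e^{-\theta t}+\Psi(t)$, which is the combination of the costate that appears naturally both in the $\Psi$-equation of~\eqref{eq:bvp_pq} and in~\eqref{eq:s_lambda_pq}. Since $(1-f)(\Psi e^{\theta t}+\gamma)=(1-f)e^{\theta t}\Phi$, Corollary~\ref{cor:opt_complete_pq} reads $s_p^{\rm opt}(t)=\tfrac{b_p^2}{4}u(t)^2$ with $u(t):=(1-f(t))e^{\theta t}\Phi(t)$; and because $f(0)=0$, it also gives $s_q^{\rm opt}(0)=\tfrac{b_q^2}{b_p^2}f(0)^2 s_p^{\rm opt}(0)=0$, hence $q(s_q^{\rm opt}(0))=q_0$ and $u(0)=\Phi(0)$. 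So the statement reduces to proving $\Phi(0)>0$ together with $u'(0)<0$.

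First I would show $\Phi(t)>0$ on $[0,T]$. From the second equation of~\eqref{eq:bvp_pq}, $\Phi$ solves the linear scalar ODE $\Phi'=a(t)\Phi-\theta\gamma e^{-\theta t}$ with $a(t):=p(s_p^{\rm opt}(t))+q(s_q^{\rm opt}(t))(2f(t)-1)$ and terminal value $\Phi(T)=\gamma e^{-\theta T}>0$. Multiplying by the positive integrating factor $\mu(t):=\exp\!\big(-\!\int_0^t a\big)$ and integrating from $t$ to $T$ yields
\begin{equation*}
\mu(t)\Phi(t)=\mu(T)\,\gamma e^{-\theta T}+\theta\gamma\int_t^T\mu(\tau)e^{-\theta\tau}\,d\tau,
\end{equation*}
and both terms on the right are strictly positive since $\gamma,\theta>0$; hence $\Phi(t)>0$ on $[0,T]$, in particular $\Phi(0)>0$. (For $T=\infty$ one either passes to the limit of the finite-horizon costates or reads off positivity directly from the convergent integral term, which is already strictly positive on its own.) Since also $b_p>0$, this already gives $s_p^{\rm opt}(0)=\tfrac{b_p^2}{4}\Phi(0)^2>0$.

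Next, differentiating $s_p^{\rm opt}=\tfrac{b_p^2}{4}u^2$ gives $\tfrac{d}{dt}s_p^{\rm opt}=\tfrac{b_p^2}{2}u\,u'$, so it suffices to sign $u'(0)$. Differentiating $u=e^{\theta t}(1-f)\Phi$ and substituting $f'$ and $\Phi'$ from~\eqref{eq:bvp_pq}, the $\pm p(s_p^{\rm opt})$ and the $q(s_q^{\rm opt})f$ contributions cancel and one is left with
\begin{equation*}
u'(t)=e^{\theta t}(1-f)\Big(\Phi\big(\theta-q(s_q^{\rm opt}(t))(1-f)\big)-\theta\gamma e^{-\theta t}\Big).
\end{equation*}
Evaluating at $t=0$ with $f(0)=0$ and $q(s_q^{\rm opt}(0))=q_0$ gives $u'(0)=\Phi(0)(\theta-q_0)-\theta\gamma$. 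This is exactly where the hypothesis $q_0>\theta$ enters: it makes $\theta-q_0<0$, so $\Phi(0)(\theta-q_0)\le 0$ and therefore $u'(0)\le-\theta\gamma<0$; combined with $u(0)=\Phi(0)>0$ this yields $\tfrac{d}{dt}s_p^{\rm opt}(0)=\tfrac{b_p^2}{2}\Phi(0)\,u'(0)<0$.

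The one step that needs care is the positivity of $\Phi$: it relies on the terminal condition $\Psi(T)=0$ being propagated backward correctly, and on handling $T=\infty$ (where $\Phi$ is understood as the limit of the finite-horizon costates, or characterized directly via the convergent identity $\mu(t)\Phi(t)=\theta\gamma\int_t^\infty\mu(\tau)e^{-\theta\tau}\,d\tau$). Everything else is differentiation of the explicit formulas~\eqref{eq:s_lambda_pq} and the ODEs~\eqref{eq:bvp_pq} together with the elementary facts $f(0)=0$ and $s_q^{\rm opt}(0)=0$, with no estimates required. The differentiability of $s_p^{\rm opt}$ at $t=0$ is not an issue, since substituting~\eqref{eq:s_lambda_pq} into~\eqref{eq:bvp_pq} closes the system into smooth ODEs for $(f,\Psi)$, so $f,\Psi\in C^1$ near $t=0$.
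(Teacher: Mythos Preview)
Your proof is correct, and the computation for $\tfrac{d}{dt}s_p^{\rm opt}(0)<0$ is essentially the paper's derivation in different variables: your identity $u'(0)=\Phi(0)(\theta-q_0)-\theta\gamma$ is exactly the paper's equation $s_p'(0)+2s_p(0)(q_0-\theta)+\sqrt{s_p(0)}\,\theta\gamma b_p=0$, obtained there by inverting~\eqref{eq:s_lambda_pq} to write $\Psi$ in terms of $s_p$ and differentiating.

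Where you genuinely diverge is in establishing $s_p^{\rm opt}(0)>0$. The paper argues by contradiction: if $s_p(0)=0$ then either $s_p$ vanishes on an interval (forcing $\Psi=-\gamma e^{-\theta t}$, incompatible with the costate equation) or $s_p>0$ immediately, and the ODE for $s_p$ then forces $s_p'<0$, a contradiction. Your route is direct: you write $\Phi=\gamma e^{-\theta t}+\Psi$ as the solution of a linear scalar ODE with positive terminal datum, and the variation-of-constants formula makes $\Phi(t)>0$ on all of $[0,T]$ transparent. This is cleaner, does not require the hypothesis $q_0>\theta$ for the positivity step, and in fact yields the lower bound $\Psi(t)>-\gamma e^{-\theta t}$ globally—stronger than what the paper later proves only for $t\ge (f^{\rm opt})^{-1}(\tfrac12)$ in Lemma~\ref{lem:lambda}. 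The paper's contradiction argument, on the other hand, stays entirely in the $s_p$ variable and avoids any discussion of the $T=\infty$ limit for the costate.
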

	\begin{proof}
		See Appendix~\ref{app:cor_sp_prime_zero}.
	\end{proof}
	Therefore, $s_p^{\rm opt}(t)$ initially decreases, which is consistent with the discussion in Section~\ref{sec:Emerging-picture}.
	
	\begin{corollary}
		\label{cor:s(0)}
		Assume the conditions of Corollary~{\rm \ref{cor:sp_prime_zero}}. Then
		\begin{equation*}
			s_q^{\rm opt}(0)=0, \qquad \frac{d}{dt}{s_q^{\rm opt}}(0)=0, \qquad \frac{d^2}{dt^2}s_q^{\rm opt}(0)=2\frac{b_q^2}{b_p^2}s^{\rm opt}_p(0)>0.
		\end{equation*}
	\end{corollary}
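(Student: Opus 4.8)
The starting point is the closed-form characterization of Corollary~\ref{cor:opt_complete_pq}, which expresses the optimal word-of-mouth promotion purely algebraically in terms of $f$ and $s_p^{\rm opt}$,
\[
s_q^{\rm opt}(t)=\frac{b_q^2}{b_p^2}\,f(t)^2\,s_p^{\rm opt}(t),
\]
where $f$ solves the boundary-value problem~\eqref{eq:bvp_pq}, so in particular $f(0)=0$. The first assertion is then immediate: $s_q^{\rm opt}(0)=\frac{b_q^2}{b_p^2}f(0)^2 s_p^{\rm opt}(0)=0$. For the two derivatives, the plan is simply to differentiate this product identity once and then twice, evaluating at $t=0$ and using $f(0)=0$ at each stage to annihilate every term that still carries a factor of $f$.

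For the first derivative, the product rule gives $\frac{d}{dt}s_q^{\rm opt}=\frac{b_q^2}{b_p^2}\big(2ff'\,s_p^{\rm opt}+f^2\,\tfrac{d}{dt}s_p^{\rm opt}\big)$; both summands contain a factor of $f$ and hence vanish at $t=0$, so $\frac{d}{dt}s_q^{\rm opt}(0)=0$. For the second derivative it is cleanest to regard $s_q^{\rm opt}=\frac{b_q^2}{b_p^2}\,(f^2)\cdot s_p^{\rm opt}$ as a product of the two factors $f^2$ and $s_p^{\rm opt}$ and apply the Leibniz rule. At $t=0$ one has $f^2=0$ and $\frac{d}{dt}(f^2)=2ff'=0$, while $\frac{d^2}{dt^2}(f^2)=2(f')^2+2ff''=2f'(0)^2$; hence only one of the three Leibniz terms survives, namely
\[
\frac{d^2}{dt^2}s_q^{\rm opt}(0)=2\,\frac{b_q^2}{b_p^2}\,f'(0)^2\,s_p^{\rm opt}(0).
\]
Finally, evaluating the $f$-equation in~\eqref{eq:bvp_pq} at $t=0$ with $f(0)=0$ gives $f'(0)=p(s_p^{\rm opt}(0))$, which is positive; since also $s_p^{\rm opt}(0)>0$ by Corollary~\ref{cor:sp_prime_zero}, the right-hand side above is a strictly positive multiple of $s_p^{\rm opt}(0)$, which is the last assertion.

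There is no genuine obstacle here: once Corollary~\ref{cor:opt_complete_pq} is available, the argument is a bookkeeping exercise with the product rule, and the sign is supplied by Corollary~\ref{cor:sp_prime_zero}. The only point worth a remark is the tacit use of $C^2$-regularity of $f$ and $s_p^{\rm opt}$ near $t=0$, needed to justify the Leibniz expansion; this holds because $s_p^{\rm opt}(0)>0$ (Corollary~\ref{cor:sp_prime_zero}) keeps the optimal trajectory away from the point $s_p=0$ at which $\sqrt{s_p}$ in~\eqref{eq:p_q_s} is not smooth, so the right-hand sides of~\eqref{eq:bvp_pq} are smooth along the trajectory in a neighbourhood of $t=0$. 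One can bypass the regularity discussion entirely by substituting the expansion $f(t)=p(s_p^{\rm opt}(0))\,t+O(t^2)$ into the combined closed form $s_q^{\rm opt}(t)=\frac{b_q^2}{4}\,f(t)^2\,(1-f(t))^2\,(\Psi(t)e^{\theta t}+\gamma)^2$ (obtained by eliminating $s_p^{\rm opt}$ between the two displays of Corollary~\ref{cor:opt_complete_pq}) and reading off the coefficients of $t^0$, $t^1$ and $t^2$ directly.
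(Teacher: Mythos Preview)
Your approach is exactly the paper's: the paper's entire proof is the one-liner ``Since $f(0)=0$, this follows from~\eqref{eq:s_lambda_pq} and Corollary~\ref{cor:sp_prime_zero}'', and you have carried out precisely that computation via the product rule applied to $s_q^{\rm opt}=\frac{b_q^2}{b_p^2}f^2 s_p^{\rm opt}$.

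One point worth flagging: your computation gives
\[
\frac{d^2}{dt^2}s_q^{\rm opt}(0)=2\,\frac{b_q^2}{b_p^2}\,f'(0)^2\,s_p^{\rm opt}(0),\qquad f'(0)=p\big(s_p^{\rm opt}(0)\big)=p_0+b_p\sqrt{s_p^{\rm opt}(0)},
\]
which is correct, and you rightly note that this is only a strictly positive multiple of $s_p^{\rm opt}(0)$ rather than the exact constant $2\frac{b_q^2}{b_p^2}s_p^{\rm opt}(0)$ stated in the corollary. The discrepancy is not a gap in your argument; the factor $f'(0)^2$ genuinely belongs there, and the paper's displayed constant appears to be an oversight. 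The qualitative conclusion the paper actually uses---that $s_q^{\rm opt}$ starts at zero with vanishing first derivative and strictly positive second derivative, hence is initially increasing---is exactly what you have established.
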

	\begin{proof}
		Since $f(0)=0$, this follows from~\eqref{eq:s_lambda_pq} and Corollary~\ref{cor:sp_prime_zero}.
	\end{proof}
	Thus, $s_q^{\rm opt}(t)$ starts from zero and initially increases. Indeed, there is no point in enhancing peer effects at $t=0$, since there are no adopters. Furthermore, as the number of adopters initially increases, so does $s_q^{\rm opt}(t)$. Note that Corollary~\ref{cor:s(0)} shows that if $T$ is sufficiently small, $s_q^{\rm opt}(t)$ is increasing in $0\leq t\leq T$, which is consistent with~\cite{Dockner-1988}.

	Let us consider the adoption level and the spending rate at the end of the planning horizon:
	\begin{corollary}
		\label{cor:s_infty}
		Assume the conditions of Corollary~{\rm \ref{cor:opt_complete_pq}}.
		\begin{enumerate}
			\item 
			If $T<\infty$, then $f^{\rm opt}(T)<1$, $s_p^{\rm opt}(T)>0$, and $s_q^{\rm opt}(T)>0$.
			\item 
			If $T=\infty$, then $\lim\limits_{t\to\infty}f^{\rm opt}(t)=1$ and
			$
			\lim\limits_{t\to\infty}s_p^{\rm opt}(t)=\lim\limits_{t\to\infty}s_q^{\rm opt}(t)=0.
			$
		\end{enumerate}
	\end{corollary}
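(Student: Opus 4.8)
The plan is to combine the explicit feedback laws~\eqref{eq:s_lambda_pq} with monotonicity and integral estimates for the solution $(f,\Psi)$ of the boundary-value problem~\eqref{eq:bvp_pq}, treating the finite- and infinite-horizon cases separately.

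\emph{Finite horizon $(T<\infty)$.} The equation for $f$ in~\eqref{eq:bvp_pq} integrates to $1-f(t)=\exp\!\big(-\int_0^t[p(s_p(\tau))+q(s_q(\tau))f(\tau)]\,d\tau\big)$; since $p,q\ge 0$ and the integrand is continuous on the compact interval $[0,T]$, the exponent is finite, so $f^{\rm opt}(T)<1$. The same identity together with $f'(0)=p(s_p(0))\ge p_0>0$ gives $f^{\rm opt}(t)>0$ for $t\in(0,T]$. Substituting the terminal condition $\Psi(T)=0$ into~\eqref{eq:s_lambda_pq} then yields $s_p^{\rm opt}(T)=\tfrac{b_p^2\gamma^2}{4}\big(1-f^{\rm opt}(T)\big)^2>0$ and $s_q^{\rm opt}(T)=\tfrac{b_q^2}{b_p^2}f^{\rm opt}(T)^2\,s_p^{\rm opt}(T)>0$, which settles Part~1.

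\emph{Infinite horizon $(T=\infty)$.} Since $f'\ge 0$ and $f<1$, the limit $f_\infty:=\lim_{t\to\infty}f(t)\le 1$ exists; if $f_\infty<1$ then $f'(t)=(1-f)(p+qf)\ge(1-f_\infty)p_0>0$ for all $t$, forcing $f(t)\ge(1-f_\infty)p_0\,t\to\infty$, a contradiction. Hence $\lim_{t\to\infty}f^{\rm opt}(t)=1$. It remains to prove $s_p^{\rm opt}(t)\to 0$, since $s_q^{\rm opt}=\tfrac{b_q^2}{b_p^2}f^2\,s_p^{\rm opt}$ then also tends to $0$. Pick $t_0$ with $f(t_0)>\tfrac12$; for $t\ge t_0$ the coefficient $a(t):=p(s_p(t))+q(s_q(t))(2f(t)-1)$ satisfies $a(t)\ge p_0>0$, so $\int_{t_0}^\infty a=\infty$. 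Applying the integrating factor $\exp(-\int_{t_0}^t a)$ to the $\Psi$-equation of~\eqref{eq:bvp_pq}, letting the upper endpoint tend to $\infty$, and using the transversality condition $\lim_{t\to\infty}\Psi(t)=0$ (the $T=\infty$ reading of $\Psi(T)=0$, which also guarantees that $\Psi$ is bounded so the boundary term drops), gives, for $t\ge t_0$,
\[
\Psi(t)e^{\theta t}=-\gamma\int_t^\infty a(s)\,e^{-\theta(s-t)}\,e^{-\int_t^s a(r)\,dr}\,ds\ \in\ (-\gamma,0],
\]
because $0<e^{-\theta(s-t)}\le 1$ and $\int_t^\infty a(s)\,e^{-\int_t^s a(r)\,dr}\,ds=1$. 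Therefore $0<\Psi(t)e^{\theta t}+\gamma\le\gamma$, and hence $0\le s_p^{\rm opt}(t)=\tfrac{b_p^2}{4}\big((1-f)(\Psi e^{\theta t}+\gamma)\big)^2\le\tfrac{b_p^2\gamma^2}{4}\big(1-f(t)\big)^2\to 0$ as $t\to\infty$, and $s_q^{\rm opt}(t)\to 0$ follows.

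The step I expect to be the crux is this last one: the feedback law multiplies the vanishing factor $1-f(t)$ by $\Psi(t)e^{\theta t}+\gamma$, which a priori could be unbounded (if $\Psi$ decayed more slowly than $e^{-\theta t}$), so the limit is of the indeterminate form $0\cdot\infty$. Resolving it requires extracting from the adjoint equation and the transversality condition the sharp two-sided bound $-\gamma<\Psi e^{\theta t}\le 0$; the ancillary points to check carefully are that $\lim_{t\to\infty}\Psi(t)=0$ is indeed the correct transversality condition for the $T=\infty$ problem and that $\Psi$ is bounded on $[0,\infty)$, so that the boundary contribution vanishes when passing to the limit in the integrating-factor identity.
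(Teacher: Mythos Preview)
Your argument is correct and follows the same overall architecture as the paper: for $T<\infty$, bound the right-hand side of the $f$-equation to conclude $f^{\rm opt}(T)<1$, then read off $s_p^{\rm opt}(T),\,s_q^{\rm opt}(T)>0$ from~\eqref{eq:s_lambda_pq} using $\Psi(T)=0$; for $T=\infty$, first show $f\to 1$, then establish the two-sided bound $-\gamma\le\Psi(t)e^{\theta t}\le 0$ for $t$ past the time where $f=\tfrac12$, and combine this with $1-f\to 0$ in the feedback law.

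The only substantive difference is \emph{how} the bound on $\Psi e^{\theta t}$ is obtained. The paper proves it as an auxiliary lemma by a sign-and-contradiction argument directly on the $\Psi$-equation: once $2f-1>0$, the sign of $\Psi'$ is that of $A(t):=\gamma e^{-\theta t}+\Psi$, and one checks that $\Psi$ can never exit the strip $[-\gamma e^{-\theta t},0]$ and still satisfy the terminal condition. You instead derive the explicit integrating-factor representation for $\Psi$ and bound the integral using $0<e^{-\theta(s-t)}\le 1$ and $\int_t^\infty a(s)e^{-\int_t^s a}\,ds=1$. Your route is more constructive and makes the structure of the bound transparent; the paper's route is slightly more economical in that it never has to justify dropping a boundary term at infinity. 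Your own closing caveat is on target: both arguments ultimately rely on interpreting $\Psi(T)=0$ as $\lim_{t\to\infty}\Psi(t)=0$ (with $\Psi$ bounded) in the infinite-horizon case, and neither you nor the paper supplies an independent justification of that transversality condition.
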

	\begin{proof}
		See appendix~\ref{app:cor:s_infty}.
	\end{proof}
	
	\begin{figure}[ht!]
		\centering
		\scalebox{0.65}{\includegraphics{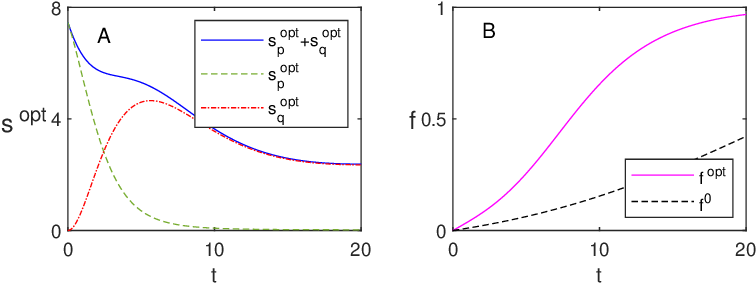}}
		\caption{Same as Figure~\ref{fig:pq_opt} for T=20. Here, $\Delta\Pi^{\rm opt}_{\rm rel}=118\%$.}
		\label{fig:T20_opt}
	\end{figure}
	
	Indeed, the optimal promotion at~$T$ is the maximizer of the profit rate at $T$, see~\eqref{eq:argmaxTgen}.
	Hence, $s_p^{\rm opt}(T)$ and $s_q^{\rm opt}(T)$  are positive if and only if there are still non-adopters at time~$T$. 
	When $T<\infty$, there exist nonadopters as $t\to T$ even in the presence of the optimal promotion, see Figure~\ref{fig:T20_opt}B. Therefore, the promotion continues until the last minute, see Figure~\ref{fig:T20_opt}A. In contrast, when $T$ is infinite there are no non-adopters  as $t\to\infty$, and so $s_p^{\rm opt}$ and $s_q^{\rm opt}$ vanish as $t\to T$, see Figure~\ref{fig:pq_opt}. 
	%Indeed, by~\eqref{eq:s_lambda_pq},
	%$$
	%  s_p^{\rm opt}(T)= \frac{b_p^2\gamma^2}{4}\left(1-f(T)\right)^2,\qquad s_q^{\rm opt}(T)=\frac{b_q^2\gamma^2}{4}(1-f(T))^2f^2(T).
	%$$ 
	% 
	%  that is, 
	%$$
	%  (s_p^{\rm opt}(T),s_q^{\rm opt}(T))	 = 	{\rm argmax}_{{\bf s}(T)} \pi(T),
	%$$
	%where
	%$
	%	\pi(T):=e^{-\theta T} \Big( \gamma (1-f(T))\big( p(s_p(T))+	
	%	q(s_q(T))f(T) \big) -s_p(T)-s_q=(T) \Big).
	%$
	%	\begin{proof}
		%			{\bf AMIT: MOVE TO APPENDIX}
		%		By~\eqref{eq:profit-b},
		%		\begin{equation*}
			%			\pi(T)=e^{-\theta t}\left[(1-f)\gamma\left(p_0+b_p\sqrt{s_p(T)}\right)-s_p(T)+f(1-f)\gamma\left(q_0+b_q\sqrt{s_q(T)}\right)-s_q(T)\right].
			%		\end{equation*}
		%		Setting $\frac{d\pi(T)}{d s_p(T)}=\frac{d\pi(T)}{d s_q(T)}=0$ yields~\eqref{eq:argmaxT}.
		%	\end{proof}
	%	By~\eqref{eq:s_lambda_pq} and~\eqref{eq:argmaxT},
	%	\begin{equation*}
		%		{\bf s}^{\rm opt}(T)={\rm argmax}_{{\bf s}(T)} \pi(T).
		
		\subsection{The ratio $s_q^{\rm opt}(t)/s_p^{\rm opt}(t)$}
		
		Since $s_p(0)>0$ and $s_q(0)=0$  (see Section~\ref{sec:init}), it follows that initially $s_p^{\rm opt}(t)>s_q^{\rm opt}(t)$. The following lemma determines whether and when this inequality reverses. 
		\begin{lemma}
			Assume the conditions of Corollary~{\rm \ref{cor:opt_complete_pq}}. 
			\begin{itemize}
				\item If $b_q<b_p$, then $s_p^{\rm opt}(t)>s_q^{\rm opt}(t)$ for all~$t$. 
				
				\item If $b_q>b_p$, then $s_p^{\rm opt}(t)>s_q^{\rm opt}(t)$ initially, and this inequality holds until the time at which $b_q f^{\rm opt}(t)=b_p$. Beyond this point, the inequality reverses, and $s_q^{\rm opt}(t)>s_p^{\rm opt}(t)$. 
			\end{itemize}
		\end{lemma}
		\begin{proof}
			The stated results follow directly from $\frac{s_q^{\rm opt}(t)}{s_p^{\rm opt}(t)}=\frac{b_q^2}{b_p^2}f^2(t)$, see~\eqref{eq:s_lambda_pq}.
		\end{proof}
		
		This lemma admits a natural interpretation. By~\eqref{eq:p_q_s} and~\eqref{eq:bvp_pq},
		$$
		\frac{df}{dt} =(1-f)\Big(p_0+q_0f\Big) +(1-f) \Big(b_p\sqrt{s_p} +b_q f \sqrt{s_q}\Big).
		$$	
		Consequently, when $b_qf<b_p$, the impact of~$s_p$ on the growth rate of~$f$ exceeds that of~$s_q$. Conversely, when $b_qf>b_p$, promotion in~$q$ becomes more effective.

		\subsection{Two scenarios for the total spending rate}

		Let us consider the total spending rate $s_p^{\rm opt}(t)+s_q^{\rm opt}(t)$ on infinite complete networks.
		\begin{corollary}
			Assume the conditions of Corollary~{\rm \ref{cor:sp_prime_zero}}. Then
			the total spending rate initially decreases.
		\end{corollary}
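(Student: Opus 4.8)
The plan is to reduce the statement to the two preceding corollaries, so that almost no new computation is needed. Write $\sigma(t) := s_p^{\rm opt}(t) + s_q^{\rm opt}(t)$ for the total spending rate. By Corollary~\ref{cor:opt_complete_pq}, both $s_p^{\rm opt}$ and $s_q^{\rm opt}$ are given by the explicit formulas~\eqref{eq:s_lambda_pq} in terms of $f(t)$ and $\Psi(t)$, which in turn solve the boundary-value problem~\eqref{eq:bvp_pq} with smooth right-hand sides; hence $\sigma$ is continuously differentiable in a neighborhood of $t=0$. It therefore suffices to show $\sigma'(0) < 0$, since then $\sigma' < 0$ on some interval $[0,\varepsilon)$ by continuity, i.e., $\sigma$ initially decreases.

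First I would differentiate and evaluate at the origin: $\sigma'(0) = \tfrac{d}{dt}s_p^{\rm opt}(0) + \tfrac{d}{dt}s_q^{\rm opt}(0)$. Corollary~\ref{cor:s(0)} gives $\tfrac{d}{dt}s_q^{\rm opt}(0) = 0$ — the internal promotion starts from zero with a vanishing first derivative, because $f(0)=0$ — so the second term disappears. Corollary~\ref{cor:sp_prime_zero}, whose hypothesis $q_0 > \theta$ we are assuming, gives $\tfrac{d}{dt}s_p^{\rm opt}(0) < 0$. Combining, $\sigma'(0) = \tfrac{d}{dt}s_p^{\rm opt}(0) < 0$, which completes the argument.

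There is essentially no obstacle here: the content is already packaged in Corollaries~\ref{cor:sp_prime_zero} and~\ref{cor:s(0)}. The only point that deserves a sentence is the regularity of $\sigma$ near $t=0$, which follows from the smoothness of the formulas~\eqref{eq:s_lambda_pq} together with standard ODE regularity for~\eqref{eq:bvp_pq}. It is worth noting where the hypothesis $q_0 > \theta$ enters: it is used only inside Corollary~\ref{cor:sp_prime_zero}, where it rules out $\tfrac{d}{dt}s_p^{\rm opt}(0) = 0$ and forces a strict initial decrease of $s_p^{\rm opt}$, and hence of $\sigma$.
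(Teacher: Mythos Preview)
Your proposal is correct and follows exactly the same approach as the paper: the paper's proof is the single line ``From Corollaries~\ref{cor:sp_prime_zero} and~\ref{cor:s(0)} it follows that $\frac{d}{dt}\big(s_p^{\rm opt}+s_q^{\rm opt}\big)(0)<0$,'' which is precisely what you do. Your added remarks on regularity and on where the hypothesis $q_0>\theta$ enters are accurate but go beyond what the paper bothers to say.
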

		\begin{proof}
			From Corollaries~\ref{cor:sp_prime_zero}~and~\ref{cor:s(0)} it follows that $\frac{d}{dt}\big(s_p^{\rm opt}+s_q^{\rm opt}\big)(0)<0$.
		\end{proof}
		This result is different from the one for 
		models where single promotion~$s^{\rm opt}(t)$ simultaneously influences~$p$ and~$q$,
		where $s^{\rm opt}(t)$ can be initially increasing or decreasing,
		depending on the parameters~\cite{Dockner-1988, Teng-1983, Teng-1984}.
		
		In Section~\ref{sec:init} we saw that $s_p^{\rm opt}(t)$ decreases throughout the promotion, whereas $s_q^{\rm opt}(t)$ increases from zero, and if $T$ is not ``short'', it later decreases. Therefore, depending on the relative magnitudes of $s_p^{\rm opt}(t)$ and $s_q^{\rm opt}(t)$, there are two possible scenarios: %for the qualitative behavior of the total spending rate:
		\begin{enumerate}
			\item
			$s_p^{\rm opt}(t)+s_q^{\rm opt}(t)$ decreases throughout the promotion.
			\item
			$s_p^{\rm opt}(t)+s_q^{\rm opt}(t)$ initially decreases and  then  increases. If $T$ is not ``short'', there is a third region where  $s_p^{\rm opt}(t)+s_q^{\rm opt}(t)$ decreases again.
		\end{enumerate} 
		
		These two scenarios can be achieved by varying a single parameter. For example, in Figure~\ref{fig:p_effect} we vary the value of $p_0$, while keeping all other parameters fixed. When $p_0$ is small, $s_p^{\rm opt}(t)$ should be sufficiently large throughout the promotion, in order to compensate for the small value of~$p_0$. Therefore, $s_p^{\rm opt}(t)+s_q^{\rm opt}(t)$ behaves as~$s_p^{\rm opt}(t)$ and decreases. Conversely, for higher values of $p_0$, $s_p^{\rm opt}(t)$ can be small, as~$p_0$ is sufficiently high without any additional external promotion. Hence, initially $s_p^{\rm opt}(t)+s_q^{\rm opt}(t)$ behaves as $s_p^{\rm opt}(t)$ and decreases. As more individuals adopt the product, $s_q^{\rm opt}(t)$ becomes dominant over~$s_p^{\rm opt}(t)$, and so $s_p^{\rm opt}(t)+s_q^{\rm opt}(t)$ behaves as~$s_q^{\rm opt}(t)$, i.e., it increases and then decreases. 
		
		% further increasing $p_0$ will not make $s_p^{\rm opt}(t)+s_q^{\rm opt}(t)$ increase and then decrease because the total spending rate is always initially decreasing.
		
		\begin{figure}[ht!]
			\centering
			\begin{minipage}{.28\textwidth}
				\centering
				\scalebox{0.65}{\includegraphics{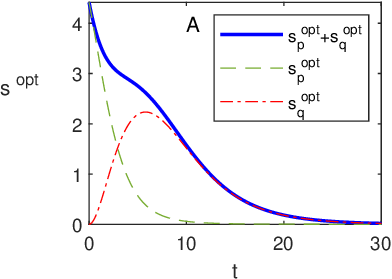}}
			\end{minipage}
			\begin{minipage}{.28\textwidth}
				\centering
				\scalebox{0.65}{\includegraphics{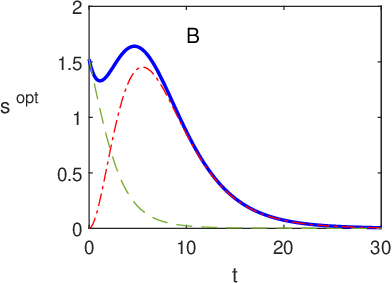}}
			\end{minipage}
			\caption{Same as Figure~\ref{fig:pq_opt}A for various values of $p_0$. A) $p_0=0.01$. B) $p_0=0.03$.}
			\label{fig:p_effect}
		\end{figure}

		\subsection{Effectiveness of optimal promotions}
		\label{subsec:eff}
		As noted in~\cite{Libai-2013}, a promotion can increase the profits by {\em acceleration} (i.e., by speeding up adoptions that would have happened even without the promotion) and by {\em expansion} (i.e., by leading to adoptions that would not have occurred otherwise by time $T$). When $T=\infty$, everyone adopts as $t\to T$ even without a promotion (since $\lim_{t\to \infty}f^0(t)=1$), and so a promotion ``only'' leads to acceleration. When $T<\infty$, however, $f^{0}(T)<1$. Hence, the promotion also leads to an expansion.

		%The behavior of $s_q(t)$ also slightly differs between finite and infinite values of $T$. The key difference is that $s_q(t)$ does not fully decay to $0$ when $T$ is finite because there are still enough nonadopters at $t=T$ to justify a promotional campaign. In contrast, when $T$ is infinite, $f^{\rm opt}(T)=1$, and there are no nodes which can be impacted by a promotion.
		
		%\begin{lemma}
		%	\label{lem:T_0}
		%	$
		%	\lim\limits_{T\to 0}\delta \Pi =\frac{\gamma b_p^2}{4p}
		%	$
		%\end{lemma}
		%\begin{proof}
		%	See appendix~\ref{app:T_0}.
		%\end{proof}
		Moving beyond the dichotomy of $T=\infty$ versus $T<\infty$ in Corollary~\ref{cor:s_infty}, we can ask:
		\begin{question}
			How does the effectiveness of the optimal promotion depend on the planing horizon~$T$?
		\end{question}
		
		The effectiveness of the optimal promotion is given by~$\Delta\Pi^{\rm opt}_{\rm rel}$; the relative increase in the overall profits due to the optimal promotion in $[0,T]$, see~\eqref{eq:delta_pi}.
		\begin{itemize}
			\item 
			When $T$ is short, the promotion mainly leads to an expansion. Therefore, as~$T$ increases,
			\begin{enumerate}
				\item 
				The promotion can influence more individuals.
				\item 
				Individuals who adopted because of the promotion have more time to lead to ``secondary adoptions'' through peer effects.
			\end{enumerate}
			Hence, $\Delta\Pi^{\rm opt}_{\rm rel}$ increases with $T$. 
			\item 
			When $T$ is long, most individuals will adopt by $t=T$ even without a promotion. Therefore, as~$T$ increases, the promotion leads to less expansion and more acceleration. Since an expansion is much more profitable than an acceleration, $\Delta\Pi^{\rm opt}_{\rm rel}$ decreases with $T$. 
		\end{itemize} 
		
		Indeed, Figure~\ref{fig:delta_pi} shows that $\Delta\Pi^{\rm opt}_{\rm rel}$ is increasing in $T$ for small $T$ and decreasing for large~$T$. The impact of the promotion on $\Delta\Pi^{\rm opt}_{\rm rel}$ over short planning horizons can be an order of magnitude larger than for long planning horizons. For example, in Figure~\ref{fig:delta_pi},~$\Delta\Pi^{\rm opt}_{\rm rel}$ can be above $800\%$ when $T$ is small, but is only about $8\%$ when $T$ is large. As noted, this is because having additional adoptions is much more profitable than having the adoptions occur earlier. 
		
		\begin{figure}[ht!]
			\centering
			\scalebox{0.5}{\includegraphics{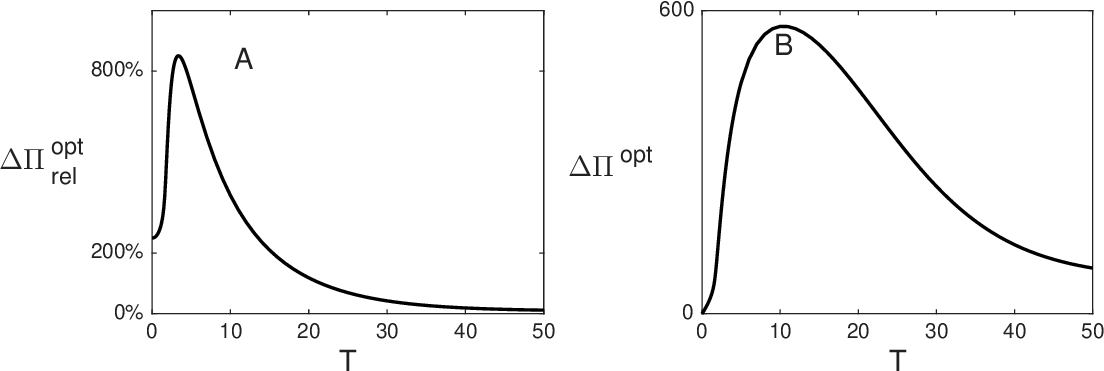}}
			\caption{The dependence of on $T$ of A)~$\Delta\Pi^{\rm opt}_{\rm rel}$, and B) $\Delta\Pi^{\rm opt}$. Parameters are as in Figure~\ref{fig:pq_opt}.}
			\label{fig:delta_pi}
		\end{figure}
		
		When selecting the optimal promotion time, a more relevant quantity for the firm is the absolute impact of the promotion, defined as $\Delta\Pi^{\rm opt}:=\Pi[{\bf s}^{\rm opt}]-\Pi^0$, where $\Pi^0:=\int_{t=0}^{T} e^{-\theta t}\gamma \frac{df^0}{dt}\, dt$ is the promotion-free profit. The maximum of $\Delta\Pi^{\rm opt}(T)$ occurs at a later time than that of~$\Delta\Pi^{\rm opt}_{\rm rel}(T)$, see Figure~\ref{fig:delta_pi}B, because the promotion-free profit increases with~$T$.

		\section{Optimal promotions on infinite lines}
		\label{sec:line}
		
		Consider now the ``opposite'' of a complete network, where each node can only be influenced by two nodes. Specifically, we consider the infinite homogeneous discrete line $\mathbb{Z}=\{\dots,-1,0,1,\dots\},$ where each node can be influenced by its two nearest neighbors. Thus, 
		\begin{equation}
			\label{eq:circle_conditions}
			p_j(t)=p(t),\qquad q_{k\to j}(t)=\frac{q(t)}{2}\mathbbm{1}_{|k-j|=1}, \qquad k,j\in\mathbb{Z}.
		\end{equation}
		Therefore, the adoption rate of node $j$ is
		$$
		\lambda_j^{\rm 1D}(t)= p(t)+\frac{q(t) }{2}\Big(X_{j-1}(t)+X_{j+1}(t)\Big), \qquad j\in\mathbb{Z}.
		$$
		
		The aggregate dynamics on the infinite line can also be reduced to a single ordinary differential equation:
		\begin{theorem}[\cite{Bass-monotone-convergence-23}]
			\label{thm:circle_convergence}
			Let $f^{\rm 1D}(t;p(t),q(t))$ denote the expected adoption level in the Bass model~\eqref{eqs:Bass-model} on the infinite line with time-dependent parameters~\eqref{eq:circle_conditions}. Then $f^{\rm 1D}(t)$ is the solution of the equation
			\begin{equation}
				\label{eq:circle_limit}
				\frac{df}{dt}=\left(p(t)+q(t)\Big(1-e^{-\int_{0}^{t}p}\Big)\right)(1-f), \qquad f(0)=0.
			\end{equation}
		\end{theorem}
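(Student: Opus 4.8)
The plan is to collapse the (infinitely many) master equations~\eqref{eq:s_prime_general} on the line into a single-index hierarchy that can be solved in closed form. By translation invariance of the Bass model~\eqref{eqs:Bass-model} with parameters~\eqref{eq:circle_conditions}, $[S_\Omega](t)$ depends only on the shape of $\Omega$ up to a shift; in particular $f^{\rm 1D}(t)=1-[S_{\{j\}}](t)$ is the same for every $j$. The key observation is combinatorial: if $\Omega$ is a block of $n$ consecutive nodes, the only $k\in\Omega^{c}$ with $q_{k\to\Omega}\neq0$ are the two nodes adjacent to the ends of the block, adjoining either one produces a block of $n+1$ consecutive nodes, and in each case $q_{k\to\Omega}=q(t)/2$. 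Hence, writing $[S^{n}](t)$ for the common value of $[S_\Omega](t)$ over all length-$n$ blocks, equation~\eqref{eq:s_prime_general} reduces to the closed system
\begin{equation*}
	\frac{d[S^{n}]}{dt}=-\big(np(t)+q(t)\big)[S^{n}]+q(t)\,[S^{n+1}],\qquad [S^{n}](0)=1,\qquad n\ge1 .
\end{equation*}
To avoid any issue with ``infinitely many master equations,'' I would derive this identity on the cycle $C_{M}$, where it holds for $n<M-1$, and pass to the limit $M\to\infty$ for each fixed $n$.

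Next I would solve it. The substitution $v_{n}(t):=[S^{n}](t)\exp\!\big(\int_{0}^{t}q+n\!\int_{0}^{t}p\big)$ removes the diagonal term and gives $v_{n}'=q(t)e^{-\int_{0}^{t}p}\,v_{n+1}$, $v_{n}(0)=1$, which admits the $n$-independent solution $v_{n}\equiv v$, where $v'=q(t)e^{-\int_{0}^{t}p}v$, $v(0)=1$; equivalently
\begin{equation*}
	[S^{n}](t)=\exp\!\left(\int_{0}^{t}q(s)e^{-\int_{0}^{s}p}\,ds-n\!\int_{0}^{t}p-\int_{0}^{t}q\right).
\end{equation*}
I would then verify directly that this satisfies the hierarchy and the initial conditions, noting that each increment $n\mapsto n+1$ multiplies it by $e^{-\int_{0}^{t}p}$ and that its exponent equals $-\int_{0}^{t}q(s)\big(1-e^{-\int_{0}^{s}p}\big)\,ds-n\int_{0}^{t}p\le0$, so $0<[S^{n}]\le1$.

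To conclude that this is the relevant solution I need uniqueness. Among $[0,1]$-valued solutions the hierarchy is unique: for the difference $d_{n}$ of two such solutions, the integrating-factor form gives $d_{n}(t)=\int_{0}^{t}q(s)\,e^{-\int_{s}^{t}(np+q)}\,d_{n+1}(s)\,ds$, hence $\sup_{[0,t]}|d_{n}|\le\|q\|_{\infty}\,t\,\sup_{[0,t]}|d_{n+1}|\le(\|q\|_{\infty}t)^{k}\to0$ on a short interval, and one bootstraps over $[0,T]$. Therefore the explicit formula is the solution, so $[S^{2}]=[S^{1}]e^{-\int_{0}^{t}p}$; substituting this into the $n=1$ equation yields $\frac{d[S^{1}]}{dt}=-[S^{1}]\big(p(t)+q(t)(1-e^{-\int_{0}^{t}p})\big)$, and since $f^{\rm 1D}=1-[S^{1}]$ with $f^{\rm 1D}(0)=0$, this is exactly~\eqref{eq:circle_limit}.

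The step I expect to require the most care is the uniqueness of the infinite hierarchy (equivalently, the legitimacy of the $M\to\infty$ passage on $C_{M}$); the closure in the first step and the verification of the explicit formula are then routine. The one genuine idea is spotting the self-similar ratio $[S^{n+1}]/[S^{n}]=e^{-\int_{0}^{t}p}$, independent of $n$, which is what makes the hierarchy explicitly solvable and, once recognized, drives the rest of the argument.
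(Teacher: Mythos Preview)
The paper does not prove this theorem; it is quoted from the companion reference~\cite{Bass-monotone-convergence-23} and stated here without argument. Your proposal therefore cannot be compared against a proof in this paper, but it stands on its own and is correct.

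Your route is the natural one and essentially the classical derivation (for constant $p,q$ it goes back to~\cite{OR-10}), carried through with time-dependent coefficients: collapse the master equations to the one-index hierarchy for consecutive blocks $[S^n]$, observe that the ansatz $[S^{n+1}]/[S^n]=e^{-\int_0^t p}$ closes it, and read off the ODE for $[S^1]=1-f^{\rm 1D}$. The only genuinely delicate step is uniqueness for the infinite hierarchy among bounded solutions, and your Picard-type estimate handles it; in fact, iterating the integral inequality $k$ times yields $\sup_{[0,t]}|d_n|\le(\|q\|_\infty t)^k/k!$, which removes the need to bootstrap over short subintervals. The detour through $C_M$ is optional once you accept the master equations on $\mathbb{Z}$ directly, but it is a legitimate way to keep every stage finite.
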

		
		This exact reduction enables us to formulate the optimal promotion as a boundary-value problem with four equations:\footnote{There are four rather than two equations due to the introduction of the state variable $y$. This variable is needed, since the optimal promotion is found by computing $\frac{\partial H}{\partial s_p}$, and without $y$, $s_p$ would appear within an integral, see~\eqref{eq:circle_limit}.}
		\begin{theorem}
			\label{lem:opt_circle_pq}
			Consider the Bass model~\eqref{eqs:Bass-model} on the homogeneous infinite line~\eqref{eq:circle_conditions}. Let $f(t)$, $y(t)$, $\Psi_1(t)$, and $\Psi_2(t)$ be the solutions of the boundary-value problem 
			\begin{equation}
				\label{eq:bvp_pq_circle}
				\begin{aligned}
					\frac{df}{dt}&=\left(p(s_p(t))+q(s_q(t))(1-e^{-y})\right)(1-f), &f(0)=0,\\
					\frac{dy}{dt}&=p(s_p(t)),   &y(0)=0,\\
					\frac{d\Psi_1}{dt}
					&=(\gamma e^{-\theta t}+\Psi_1) \left(p(s_p(t))+q(s_q(t))(1-e^{-y})\right),  &\Psi_1(T)=0,\\
					\frac{d\Psi_2}{dt}
					&=-(\gamma e^{-\theta t}+\Psi_1)(1-f)q(s_q(t))e^{-y}, &\Psi_2(T)=0.
				\end{aligned}
			\end{equation}
			Then the optimal promotion ${\bf s}^{\rm opt}:=(s_p^{\rm opt}, s_q^{\rm opt})$, see~\eqref{eq:profit}, satisfies the condition
			\begin{equation}
				\label{eq:H_line}
				\frac{\partial H}{\partial s_p}=\frac{\partial H}{\partial s_q}=0, \qquad
				H:=\Big(\gamma\frac{df}{dt}-s_p(t)-s_q(t)\Big)e^{-\theta t}+\Psi_1\frac{df}{dt}+\Psi_2p(s_p(t)).
			\end{equation}
		\end{theorem}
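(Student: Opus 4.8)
The plan is to follow exactly the route used for Theorems~\ref{thm:gen}, \ref{lem:opt_M}, and~\ref{lem:opt_complete_pq}: first replace the stochastic network dynamics by the exact low-dimensional deterministic system available on this network — here, the single ODE of Theorem~\ref{thm:circle_convergence} — and then apply Pontryagin's maximum principle to the resulting finite-dimensional optimal control problem. The one genuinely new feature compared with the complete-network case is that the reduced dynamics~\eqref{eq:circle_limit} is not an ODE in $f$ alone, because its coefficient contains the history term $e^{-\int_0^t p}$; since Pontryagin's principle requires Markovian state dynamics of the form $\dot x = g(x,\mathbf{s},t)$, the first step must be to absorb this term into an enlarged state.

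First I would introduce the auxiliary state $y(t):=\int_0^t p(s_p(\tau))\,d\tau$, so that $\dot y = p(s_p(t))$ with $y(0)=0$, and rewrite~\eqref{eq:circle_limit}, with $p(t)$ replaced by $p(s_p(t))$ and $q(t)$ by $q(s_q(t))$, as the two-dimensional ODE system
\begin{align*}
\frac{df}{dt} &= \bigl(p(s_p(t)) + q(s_q(t))(1-e^{-y})\bigr)(1-f), & f(0) &= 0, \\
\frac{dy}{dt} &= p(s_p(t)), & y(0) &= 0.
\end{align*}
With this substitution, maximizing~\eqref{eq:profit} becomes a standard Lagrange-type control problem: two states $(f,y)$ with both terminal values free, two controls $(s_p,s_q)\ge 0$, and running payoff $e^{-\theta t}\bigl(\gamma\,\tfrac{df}{dt}-s_p-s_q\bigr)$, where $\tfrac{df}{dt}$ now abbreviates the right-hand side above.

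Next I would form the Hamiltonian with costates $\Psi_1$ for $f$ and $\Psi_2$ for $y$ — precisely the $H$ in~\eqref{eq:H_line} — and invoke the maximum principle. The state equations are recovered from $\dot f = \partial H/\partial\Psi_1$ and $\dot y = \partial H/\partial\Psi_2$. The costate equations are $\dot\Psi_1 = -\partial H/\partial f$ and $\dot\Psi_2 = -\partial H/\partial y$; differentiating $H$ gives $\partial H/\partial f = -(\gamma e^{-\theta t}+\Psi_1)\bigl(p(s_p)+q(s_q)(1-e^{-y})\bigr)$ and $\partial H/\partial y = (\gamma e^{-\theta t}+\Psi_1)(1-f)q(s_q)e^{-y}$, which reproduce the third and fourth equations of~\eqref{eq:bvp_pq_circle}. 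Since neither $f(T)$ nor $y(T)$ is constrained and there is no terminal payoff term, the transversality conditions are $\Psi_1(T)=\Psi_2(T)=0$, which — together with $f(0)=y(0)=0$ — gives the stated two-point boundary-value problem. Finally, assuming the optimal promotion is interior (the regime these equations describe), the pair $(s_p^{\rm opt},s_q^{\rm opt})$ maximizes $H$ pointwise in $t$, so $\partial H/\partial s_p = \partial H/\partial s_q = 0$, i.e.~\eqref{eq:H_line}.

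The step I expect to need the most care is verifying that the extra state $y$ does not alter the problem and keeping track of the fact that $s_p$ now enters $H$ through three channels — the explicit $-s_p e^{-\theta t}$, the terms $(\gamma e^{-\theta t}+\Psi_1)\tfrac{df}{dt}$, and the new term $\Psi_2\,p(s_p)$ — so that $\partial H/\partial s_p = 0$ correctly picks up the $\Psi_2\,p'(s_p)$ contribution; this is exactly the point flagged in the footnote. The remaining computations of partial derivatives are routine and parallel to the proof of Theorem~\ref{lem:opt_complete_pq}, so I would, as the authors do, place them in an appendix.
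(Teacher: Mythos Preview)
Your proposal is correct and follows essentially the same route as the paper: introduce the auxiliary state $y=\int_0^t p(s_p)$ to remove the history term from~\eqref{eq:circle_limit}, apply Pontryagin's principle (Theorem~\ref{thm:pontryagin}) to the resulting two-state, two-control problem, and read off the costate equations from $\dot\Psi_i=-\partial H/\partial x_i$ together with the transversality conditions $\Psi_i(T)=0$. Your identification of the three channels through which $s_p$ enters $H$ matches exactly the point the paper makes in the footnote and in the appendix computation.
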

		\begin{corollary}
			\label{cor:opt_circle_pq}
			Assume the conditions of Theorem~{\rm \ref{lem:opt_circle_pq}}. Let the effect of the promotion be given by~\eqref{eq:p_q_s}. Then equation~\eqref{eq:H_line} reduces to
			\begin{equation}
				\label{eq:s_lambda_pq_circle}
				s^{\rm opt}_p(t)=\frac{b_p^2}{4}\Big((\gamma +\Psi_1e^{\theta t})(1-f)+\Psi_2 e^{\theta t}\Big)^2,\quad
				s^{\rm opt}_q(t)=\frac{b_q^2}{4}\Big((\gamma +\Psi_1e^{\theta t})(1-f)(1-e^{-y})\Big)^2.
			\end{equation}
		\end{corollary}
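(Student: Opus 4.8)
The plan is to substitute the promotion-effect functions \eqref{eq:p_q_s} into the Hamiltonian $H$ of \eqref{eq:H_line} and then solve the two stationarity conditions $\partial H/\partial s_p=\partial H/\partial s_q=0$ explicitly for $s_p$ and $s_q$. First I would replace $df/dt$ in $H$ by its expression from the first equation of \eqref{eq:bvp_pq_circle}, so that
$$H=\Bigl(\gamma\bigl(p(s_p)+q(s_q)(1-e^{-y})\bigr)(1-f)-s_p-s_q\Bigr)e^{-\theta t}+\Psi_1\bigl(p(s_p)+q(s_q)(1-e^{-y})\bigr)(1-f)+\Psi_2\,p(s_p),$$
where $f,y,\Psi_1,\Psi_2$ are held fixed, as in Pontryagin's principle the controls are varied with the state and costate frozen. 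Since $p(s_p)=p_0+b_p\sqrt{s_p}$ we have $\partial p/\partial s_p=b_p/(2\sqrt{s_p})$, and $s_p$ enters $H$ only through $p(s_p)$ and the linear term $-s_p e^{-\theta t}$; symmetrically $s_q$ enters only through $q(s_q)$ and $-s_q e^{-\theta t}$. Hence each stationarity condition takes the form $\tfrac{b}{2\sqrt{s}}\cdot(\text{coefficient})=e^{-\theta t}$ and is solved for $\sqrt{s}$ directly.

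Carrying this out for $s_p$: the coefficient of $p(s_p)$ in $H$ is $\gamma(1-f)e^{-\theta t}+\Psi_1(1-f)+\Psi_2$, so $\partial H/\partial s_p=0$ reads
$$\frac{b_p}{2\sqrt{s_p}}\Bigl(\gamma(1-f)e^{-\theta t}+\Psi_1(1-f)+\Psi_2\Bigr)=e^{-\theta t},$$
i.e. $\sqrt{s_p^{\rm opt}}=\tfrac{b_p}{2}\bigl((\gamma+\Psi_1 e^{\theta t})(1-f)+\Psi_2 e^{\theta t}\bigr)$, whose square is the first line of \eqref{eq:s_lambda_pq_circle}. For $s_q$, the coefficient of $q(s_q)$ in $H$ is $(1-e^{-y})(1-f)(\gamma e^{-\theta t}+\Psi_1)$, so $\partial H/\partial s_q=0$ gives $\sqrt{s_q^{\rm opt}}=\tfrac{b_q}{2}(\gamma+\Psi_1 e^{\theta t})(1-f)(1-e^{-y})$, and squaring yields the second line of \eqref{eq:s_lambda_pq_circle}.

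There is no deep obstacle here; the computation is short. The only points requiring care are the following. First, one should check that the stationary point is a maximum of $H$ over the feasible control set $\mathcal{S}$: this holds because $H$ is strictly concave in $(s_p,s_q)$ on $s_p,s_q>0$, being a positive combination of the concave functions $\sqrt{s_p},\sqrt{s_q}$ minus the linear term $s_p+s_q$. Second, the explicit formulas describe the argmax only when the brackets are non-negative, so that the implied $\sqrt{s}$ is a legitimate non-negative quantity; where a bracket would be negative the maximum is attained at the corner $s=0$. In particular, since $y(0)=0$, the factor $1-e^{-y}$ vanishes at $t=0$, so the formula correctly returns $s_q^{\rm opt}(0)=0$, matching the corner solution there. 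I would note, as is done elsewhere in the paper, that the relevant brackets — controlled by the sign of $\gamma e^{-\theta t}+\Psi_1$ and of $\Psi_2$ via the costate equations in \eqref{eq:bvp_pq_circle} — stay non-negative along the optimal trajectory, so that \eqref{eq:s_lambda_pq_circle} indeed gives the optimal promotion throughout $[0,T]$.
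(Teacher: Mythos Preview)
Your proof is correct and follows essentially the same approach as the paper: substitute \eqref{eq:p_q_s} into the Hamiltonian, take $\partial H/\partial s_p$ and $\partial H/\partial s_q$, and solve the resulting equations for $\sqrt{s_p}$ and $\sqrt{s_q}$. Your additional remarks on concavity of $H$ in the controls and on the non-negativity of the bracketed expressions go a bit beyond what the paper writes out explicitly, but they are consistent with the paper's treatment (the sign results you invoke are exactly those established in Lemma~\ref{lem:lambda_1d}).
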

		\begin{proof}
			See Appendix~\ref{app:lem:opt_circle_pq}.
		\end{proof}
		
		We can use~(\ref{eq:bvp_pq_circle},~\ref{eq:s_lambda_pq_circle}) to deduce some properties of ${\bf s}^{\rm opt}(t)$ on the infinite line, which mimic those on complete networks:
		\begin{corollary}
			\label{cor:s(0)_circle}
			Assume the conditions of Corollary~\ref{cor:opt_circle_pq}. Then $s_q^{\rm opt}(0)=0$.
		\end{corollary}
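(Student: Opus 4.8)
The plan is to read off $s_q^{\rm opt}(0)$ directly from the explicit formula in Corollary~\ref{cor:opt_circle_pq} and the initial conditions in the boundary-value problem~\eqref{eq:bvp_pq_circle}. By Corollary~\ref{cor:opt_circle_pq}, under the square-root promotion model~\eqref{eq:p_q_s} we have
\begin{equation*}
s^{\rm opt}_q(t)=\frac{b_q^2}{4}\Big((\gamma +\Psi_1(t)e^{\theta t})\,(1-f(t))\,(1-e^{-y(t)})\Big)^2 ,
\end{equation*}
so it suffices to evaluate the three factors inside the square at $t=0$.

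First I would invoke the initial conditions of~\eqref{eq:bvp_pq_circle}, namely $f(0)=0$ and $y(0)=0$. The factor $1-e^{-y(t)}$ then vanishes at $t=0$ since $1-e^{-y(0)}=1-e^{0}=0$. (The factor $1-f(0)=1$ is irrelevant, and $\gamma+\Psi_1(0)e^{0}=\gamma+\Psi_1(0)$ is finite because $\Psi_1$ solves a linear ODE on $[0,T]$ and is therefore bounded, though one does not even need boundedness here.) Hence the bracketed expression is $0$ at $t=0$, and squaring and multiplying by $\tfrac{b_q^2}{4}$ gives $s_q^{\rm opt}(0)=0$.

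There is essentially no obstacle: the statement is an immediate consequence of the closed-form expression~\eqref{eq:s_lambda_pq_circle} together with $y(0)=0$. The only thing worth flagging is the \emph{interpretive} point that the relevant vanishing factor is $1-e^{-y(0)}$ rather than $1-f(0)$; both happen to be useful on complete networks (cf.\ Corollary~\ref{cor:s(0)}, where $f(0)=0$ drives $s_q^{\rm opt}(0)=0$), but on the line the auxiliary state $y$, which tracks the cumulative external-influence exposure $\int_0^t p$, is what encodes that there are no adopters at $t=0$ and therefore no peer effects to boost. This is exactly the same intuition as in Section~\ref{sec:init}: promoting $q$ at $t=0$ is pointless because $s_q$ multiplies a term proportional to the current adoption level, which is zero initially.
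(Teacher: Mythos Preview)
Your proof is correct and follows exactly the same route as the paper: the paper's proof is the single line ``This follows directly from~\eqref{eq:s_lambda_pq_circle},'' and you have simply spelled out that $y(0)=0$ makes the factor $1-e^{-y}$ vanish.
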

		\begin{proof}
			This follows directly from~\eqref{eq:s_lambda_pq_circle}.
		\end{proof}
		
		\begin{corollary}
			\label{cor:s_infty_circle}
			Assume the conditions of Corollary~\ref{cor:opt_circle_pq}.
			\begin{enumerate}
				\item 
				If $T<\infty$, then $f^{\rm opt}(T)<1$. Hence, $s_p^{\rm opt}(T)>0$ and $s_q^{\rm opt}(T)>0$.
				\item 
				If $T=\infty$, then $\lim\limits_{t\to\infty}f^{\rm opt}(t)=1$. Hence,
				$
				\lim\limits_{t\to\infty}s_p^{\rm opt}(t)=\lim\limits_{t\to\infty}s_q^{\rm opt}(t)=0.
				$
			\end{enumerate}
		\end{corollary}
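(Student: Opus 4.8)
The plan is to read both parts directly off the boundary-value problem~\eqref{eq:bvp_pq_circle} and the optimality formulas~\eqref{eq:s_lambda_pq_circle}, in the same spirit as the proof of Corollary~\ref{cor:s_infty} for the infinite complete network. Set $g(t):=p(s_p^{\rm opt}(t))+q(s_q^{\rm opt}(t))\bigl(1-e^{-y(t)}\bigr)$; since $q(s_q^{\rm opt})\ge 0$ and $p(s_p^{\rm opt})=p_0+b_p\sqrt{s_p^{\rm opt}}\ge p_0>0$, we have $g(t)\ge p_0>0$, the $f$-equation integrates to $1-f^{\rm opt}(t)=\exp\bigl(-\int_0^t g\bigr)$, and $y(t)=\int_0^t p(s_p^{\rm opt})\ge p_0 t$, so $e^{-y(t)}\le e^{-p_0 t}$ and $y(t)>0$ for $t>0$.

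\emph{Part~1 ($T<\infty$).} As $s_p^{\rm opt},s_q^{\rm opt}\in{\mathcal S}$ are piecewise continuous on the compact interval $[0,T]$, they are bounded, so $g$ is bounded and $\int_0^T g<\infty$; hence $1-f^{\rm opt}(T)=\exp\bigl(-\int_0^T g\bigr)>0$, i.e.\ $f^{\rm opt}(T)<1$. Substituting the terminal conditions $\Psi_1(T)=\Psi_2(T)=0$ into~\eqref{eq:s_lambda_pq_circle} collapses the two formulas to $s_p^{\rm opt}(T)=\tfrac{b_p^2\gamma^2}{4}\bigl(1-f^{\rm opt}(T)\bigr)^2$ and $s_q^{\rm opt}(T)=\tfrac{b_q^2\gamma^2}{4}\bigl(1-f^{\rm opt}(T)\bigr)^2\bigl(1-e^{-y(T)}\bigr)^2$; both are strictly positive because $f^{\rm opt}(T)<1$ and $y(T)\ge p_0 T>0$.

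\emph{Part~2 ($T=\infty$).} From $g\ge p_0>0$ we get $\int_0^t g\to\infty$, so $1-f^{\rm opt}(t)\to 0$, i.e.\ $\lim_{t\to\infty}f^{\rm opt}(t)=1$, and also $e^{-y(t)}\to 0$. To carry this over to the spending rates I would first control $\Psi_1$: integrating the linear $\Psi_1$-equation with integrating factor $e^{-\int_0^t g}$ and imposing the infinite-horizon transversality condition $\lim_{t\to\infty}\Psi_1(t)=0$ yields $\Psi_1(t)=-\gamma\int_t^\infty e^{-(G(\tau)-G(t))}e^{-\theta\tau}g(\tau)\,d\tau$, where $G(\tau):=\int_0^\tau g$. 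In particular $\Psi_1(t)<0$, and since $\int_t^\infty e^{-(G(\tau)-G(t))}g(\tau)\,d\tau=1$ while the extra factor $e^{-\theta(\tau-t)}$ is $<1$ on $(t,\infty)$, one obtains the two-sided bound $0<\gamma+\Psi_1(t)e^{\theta t}<\gamma$. Plugging this into the $s_q^{\rm opt}$-formula of~\eqref{eq:s_lambda_pq_circle} gives $s_q^{\rm opt}(t)\le\tfrac{b_q^2\gamma^2}{4}\bigl(1-f^{\rm opt}(t)\bigr)^2\to 0$; moreover $s_q^{\rm opt}(t)\le\tfrac{b_q^2\gamma^2}{4}e^{-2p_0 t}$, so $q(s_q^{\rm opt}(t))=q_0+b_q\sqrt{s_q^{\rm opt}(t)}$ is bounded. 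The $\Psi_2$-equation then integrates (using $\lim_{t\to\infty}\Psi_2(t)=0$) to $\Psi_2(t)=\int_t^\infty\bigl(\gamma e^{-\theta\tau}+\Psi_1(\tau)\bigr)\bigl(1-f^{\rm opt}(\tau)\bigr)q(s_q^{\rm opt}(\tau))e^{-y(\tau)}\,d\tau$, and the bounds $0<\gamma e^{-\theta\tau}+\Psi_1(\tau)<\gamma e^{-\theta\tau}$, $1-f^{\rm opt}(\tau)\le e^{-p_0\tau}$, $e^{-y(\tau)}\le e^{-p_0\tau}$, together with the boundedness of $q(s_q^{\rm opt})$, force $e^{\theta t}\Psi_2(t)\to 0$. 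Since $s_p^{\rm opt}(t)=\tfrac{b_p^2}{4}\bigl((\gamma+\Psi_1e^{\theta t})(1-f^{\rm opt})+\Psi_2e^{\theta t}\bigr)^2$ and both terms in the bracket vanish as $t\to\infty$ (the first because $\gamma+\Psi_1e^{\theta t}$ is bounded and $1-f^{\rm opt}\to 0$, the second by the preceding estimate), we conclude $\lim_{t\to\infty}s_p^{\rm opt}(t)=0$.

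The only genuinely delicate point is the infinite-horizon case: one must know that the boundary-value problem~\eqref{eq:bvp_pq_circle} on $[0,\infty)$ is well posed with the terminal conditions read as the transversality conditions $\lim_{t\to\infty}\Psi_1(t)=\lim_{t\to\infty}\Psi_2(t)=0$, which is exactly what legitimizes the integral representations of $\Psi_1$ and $\Psi_2$ used above (they are then self-consistent, since the integrands decay exponentially). This is the same issue that arises for the infinite complete network, so I would simply invoke the argument behind Corollary~\ref{cor:s_infty}; everything else above is elementary manipulation of the linear equations for $f,y,\Psi_1,\Psi_2$ together with the explicit formulas~\eqref{eq:s_lambda_pq_circle}.
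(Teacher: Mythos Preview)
Your argument is correct and shares the paper's overall architecture (Part~1 via a bound on the adoption rate; Part~2 by first controlling $\gamma+\Psi_1 e^{\theta t}$, then $s_q^{\rm opt}$, then $\Psi_2 e^{\theta t}$, then $s_p^{\rm opt}$), but the individual steps are executed differently. For the $\Psi_1$-bound the paper isolates an auxiliary lemma (Lemma~\ref{lem:lambda_1d}) and proves $-\gamma e^{-\theta t}\le\Psi_1\le 0$ by a sign/contradiction argument on $A(t):=\gamma e^{-\theta t}+\Psi_1$, whereas you derive the same two-sided bound directly from the integral representation of the linear $\Psi_1$-equation together with $\int_t^\infty g\,e^{-(G(\tau)-G(t))}\,d\tau=1$. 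For $\Psi_2 e^{\theta t}\to 0$ the paper applies L'H\^opital's rule to $\Psi_2/e^{-\theta t}$, while you integrate the $\Psi_2$-equation and dominate the integrand by $Ce^{-(\theta+2p_0)\tau}$. Your route is more constructive and yields explicit decay rates (e.g.\ $e^{\theta t}\Psi_2(t)=O(e^{-2p_0 t})$), at the price of invoking the transversality conditions $\Psi_i(\infty)=0$ explicitly to write the integral formulas, a point you correctly flag; the paper's contradiction argument for $\Psi_1$ is formulated for finite $T$ and passes to $T=\infty$ under the same assumption, so neither approach avoids that issue.
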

		\begin{proof}
			See appendix~\ref{app:cor:s_infty_circle}.
		\end{proof}
		
		Similarly to a complete network, $s_p^{\rm opt}(t)$ decreases, whereas $s_q^{\rm opt}(t)$ increases from zero to a global maximum and then decreases (Figure~\ref{fig:pq_opt_circle}). The relative sizes of $s_p^{\rm opt}(t)$ and $s_q^{\rm opt}(t)$ are similar to those in a complete network with $M=6$ individuals~(Figure~\ref{fig:pq_opt_M}B), in the sense that $s_q^{\rm opt}(t)$ barely makes it above $s_p^{\rm opt}(t)$. In particular, on the infinite line, $s_q^{\rm opt}(t)$ does not reach as high of a value as on an infinite complete network with the same parameters~(Figure~\ref{fig:pq_opt}). Intuitively, this is because on the infinite line, an increase in~$q$ only impacts the few individuals that are sufficiently close to individuals which have already adopted. Hence, $s_q(t)$ has less of an impact on the adoption level.
		
		The relative increase of the profit on the infinite line (with an infinite planning horizon) due to the optimal promotion is $\Delta\Pi^{\rm opt}_{\rm rel}\approx 12\%$. This value is about $50\%$ higher than for the infinite complete network with the same parameters, see Figure~\ref{fig:pq_opt}. Since the expected adoption level in the infinite complete network is much higher than on the infinite line~\cite{OR-10}, this serves as another manifestation of the ``principle'' that promotions are more effective on ``slower'' networks (Section~\ref{sec:Emerging-picture}).
		
		\begin{figure}[ht!]
			\centering
			\scalebox{0.65}{\includegraphics{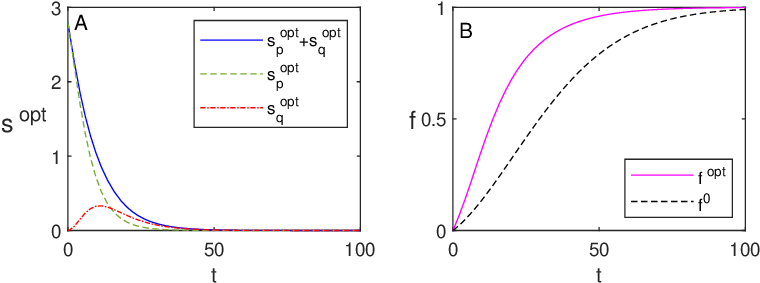}}
			\caption{Same as Figure~\ref{fig:pq_opt_M2}, on the infinite line. Here, $\Delta\Pi^{\rm opt}_{\rm rel}\approx 12\%$.}
			\label{fig:pq_opt_circle}
		\end{figure}

		\section{Optimal strategies on heterogeneous networks}
		\label{sec:heterogeneous}
		
		In order to allow for heterogeneity among consumers, we consider a complete heterogeneous network that consists of two equal-size groups denoted by $\mathcal{M}_1:=\{1,\dots,\frac{M}{2}\}$ and $\mathcal{M}_2:=\{\frac{M}{2}+1,\dots,M\}$, each of which is homogeneous. 	For any node in group~$\mathcal{M}_k$, the external and internal influence rates are~$p^k(t)$ and ~$q^k(t)$, i.e.,
		\begin{equation}
			\label{eq:het_conditions}
			p_{j}(t):=p^k(t),\qquad q_{m\to j}(t)=\frac{q^k(t)}{M-1}\mathbbm{1}_{m\neq j},\qquad j\in \mathcal{M}_k, \qquad m\in\mathcal{M}, \qquad k=1,2.
		\end{equation}
		Hence, the adoption rate of node $j$ is
		\begin{equation*}
			\label{eq:discrete_group_model}
			\lambda_j(t)=p^{k}(t)+\frac{q^k(t)}{M-1}N(t),\qquad j\in\mathcal{M}_k, \qquad k=1,2,
		\end{equation*}
		where $N(t)=\sum_{j=1}^{M}X_j(t)$ is the number of adopters at time~$t$. We also allow the two groups to differ in their response to the promotion, so that if we apply a uniform promotion $(s_p,s_q)$ to both groups, then
		\begin{equation}
			\label{eq:p_q_s_het}
			p^k(s_{p}):=p_0^k+b_{p}^k\sqrt{s_p}, \qquad q^k(s_{q}):=q_0^k+b_{q}^k\sqrt{s_q}, \qquad k=1,2, \qquad s_p,s_q\geq 0,
		\end{equation}
		where $b_{q}^k\sqrt{s_q}$ represents the extent to which promotional spending amplifies the sensitivity to peer influence of individuals of type~$k$.
		
		Let us denote the expected adoption level by
		$
		f^{\rm complete-het}(t)=f_{1}^{\rm complete}(t)+f_{2}^{\rm complete}(t),
		$
		where $f_{k}^{\rm complete}(t):=\frac{1}{M}\sum_{j\in\mathcal{M}_k}\E[X_j]$ is the expected population fraction of adopters from group $\mathcal{M}_k$. As in the homogeneous case, as $M\to\infty$, the heterogeneous complete network model approaches the corresponding heterogeneous compartmental model:
		\begin{theorem}[\cite{Bass-monotone-convergence-23}]
			\label{thm:groups_convergence}
			Consider the Bass model~{\rm (\ref{eqs:Bass-model},~\ref{eq:het_conditions})} on a heterogeneous complete network with two homogeneous groups of equal size and with time-dependent parameters. Then $\lim\limits_{M\to\infty}f^{\rm complete-het}=f_1^{\rm compart-het}+f_2^{\rm compart-het}$, where $\big\{f_k^{\rm compart-het}\big\}_{k=1}^2$ are the solutions of the equation
			\begin{equation}
				\label{eq:groups_limit}
				\frac{df_k}{dt}=\Big(\frac{1}{2}-f_k\Big)
				\Big(p^k(t)+q^k(t)(f_1+f_2)\Big),\qquad f_k(0)=0,\qquad k=1,2.
			\end{equation}
		\end{theorem}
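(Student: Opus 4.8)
The plan is to exploit the symmetry of the network inside each of the two groups to collapse the master equations to a closed finite system, to identify the limiting functions through a product ansatz, and then to make the passage $M\to\infty$ rigorous by a Gr\"onwall estimate anchored to the sub-Markov structure of the master-equation generator. As the name of~\cite{Bass-monotone-convergence-23} suggests, the same conclusion can alternatively be obtained from a monotone-convergence argument, which I would use as a fallback.

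\emph{Step 1 (symmetry reduction).} By~\eqref{eq:het_conditions}, the non-adoption probability $[S_\Omega]$ of a nonempty set $\Omega\subset\mathcal{M}$ depends only on $(n_1,n_2):=\big(|\Omega\cap\mathcal{M}_1|,|\Omega\cap\mathcal{M}_2|\big)$; write $[S^{n_1,n_2}]$ for this common value. Substituting~\eqref{eq:het_conditions} into the general master equations~\eqref{eq:s_prime_general} and sorting the sum over the added node $k\in\Omega^c$ by its group yields the closed system of $\sim M^2/4$ equations
\begin{equation*}
\frac{d[S^{n_1,n_2}]}{dt}=-(n_1p^1+n_2p^2)[S^{n_1,n_2}]-\frac{n_1q^1+n_2q^2}{M-1}\Big((M-n_1-n_2)[S^{n_1,n_2}]-\big(\tfrac{M}{2}-n_1\big)[S^{n_1+1,n_2}]-\big(\tfrac{M}{2}-n_2\big)[S^{n_1,n_2+1}]\Big),
\end{equation*}
with $[S^{n_1,n_2}](0)=1$. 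Since $\E[X_j]$ is constant on each group, $f_1^{\rm complete}=\tfrac12\big(1-[S^{1,0}]\big)$ and $f_2^{\rm complete}=\tfrac12\big(1-[S^{0,1}]\big)$, so it suffices to control $[S^{1,0}]$ and $[S^{0,1}]$ on $[0,T]$ as $M\to\infty$.

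\emph{Step 2 (the limit).} Let $\{f_k^{\rm compart-het}\}$ solve~\eqref{eq:groups_limit} and set $u_k:=1-2f_k^{\rm compart-het}\in[0,1]$, so $u_k(0)=1$; using $1-\tfrac{u_1+u_2}{2}=f_1^{\rm compart-het}+f_2^{\rm compart-het}$, equation~\eqref{eq:groups_limit} is equivalent to $\frac{du_k}{dt}=-u_k\big(p^k+q^k(1-\tfrac{u_1+u_2}{2})\big)$. A direct substitution shows that $V_{n_1,n_2}:=u_1^{n_1}u_2^{n_2}$ satisfies the system of Step 1 \emph{exactly} apart from replacing $\tfrac{M-n_1-n_2}{M-1}$ by $1$ and $\tfrac{M/2-n_i}{M-1}$ by $\tfrac12$; the residual is $r_{n_1,n_2}=\tfrac{n_1q^1+n_2q^2}{M-1}V_{n_1,n_2}\big[-(n_1+n_2-1)+(n_1-\tfrac12)u_1+(n_2-\tfrac12)u_2\big]$, which satisfies $|r_{n_1,n_2}|\le\tfrac{C}{M}(n_1+n_2)^2|V_{n_1,n_2}|$ with $C$ depending only on $\sup_{[0,T]}(q^1+q^2)$. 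Hence the candidate limits are $V_{1,0}=u_1$ and $V_{0,1}=u_2$, and the theorem follows once $[S^{1,0}]\to u_1$ and $[S^{0,1}]\to u_2$ uniformly on $[0,T]$, because then $f_k^{\rm complete}\to\tfrac12(1-u_k)=f_k^{\rm compart-het}$ and $f^{\rm complete-het}\to f_1^{\rm compart-het}+f_2^{\rm compart-het}$. (Equivalently, in terms of the equations for $\E[X_j]$, this is the assertion that the cross-correlations $\E[X_\ell X_j]-\E[X_\ell]\E[X_j]$ are $O(1/M)$, i.e.\ that mean-field factorization becomes exact on the complete graph.)

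\emph{Step 3 (convergence, and the main obstacle).} The error $D_{n_1,n_2}:=[S^{n_1,n_2}]-V_{n_1,n_2}$ solves the linear time-dependent system of Step 1 with zero initial data and $-r_{n_1,n_2}$ as source. That system is governed by a matrix with nonpositive diagonal, nonnegative off-diagonal entries and row sums $-(n_1p^1+n_2p^2)\le0$ (the ``killing rate'' being exactly the external-influence rate), so it generates a sub-stochastic evolution family $U(t,\tau)$; Duhamel's formula and $|V_{n_1,n_2}|\le1$ give
\begin{equation*}
|D_{1,0}(t)|\le\int_0^t\sum_{(n_1,n_2)}U(t,\tau)_{(1,0),(n_1,n_2)}\,|r_{n_1,n_2}(\tau)|\,d\tau\le\frac{C}{M}\int_0^t\sum_{(n_1,n_2)}U(t,\tau)_{(1,0),(n_1,n_2)}\,(n_1+n_2)^2\,d\tau,
\end{equation*}
and likewise for $D_{0,1}$. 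Everything therefore reduces to the single estimate that must be uniform in $M$, namely $\sup_{M}\sup_{0\le\tau\le t\le T}\sum_{(n_1,n_2)}U(t,\tau)_{(1,0),(n_1,n_2)}(n_1+n_2)^2<\infty$. This is a second-moment bound on the ``level'' $n_1+n_2$ along the killed chain started from the single-node configuration; since along that chain the level increases at rate at most $c(n_1+n_2)$ with $c$ independent of $M$, and killing only lowers it, the level is stochastically dominated by a Yule-type pure-birth process, whose moments grow at most exponentially in $t$ uniformly in $M$. Granting this, $|D_{1,0}(t)|,|D_{0,1}(t)|=O(1/M)$ uniformly on $[0,T]$, whence $[S^{1,0}]\to u_1$, $[S^{0,1}]\to u_2$, and summing the two group adoption levels gives the claim. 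I expect the displayed moment bound to be the only genuine obstacle; alternatively one bypasses it entirely via the monotone-convergence route of~\cite{Bass-monotone-convergence-23}, showing that $f^{\rm complete-het}(t;M)$ is increasing in $M$ and bounded above by $1$, hence convergent, and identifying its limit by passing to the limit in the equations of Step 1 and invoking uniqueness for~\eqref{eq:groups_limit}.
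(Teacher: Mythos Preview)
The paper does not actually prove this theorem; it is quoted verbatim from~\cite{Bass-monotone-convergence-23} and used as a black box. So there is no ``paper's own proof'' to match; what can be compared is your direct argument against the monotone-convergence route that the title of~\cite{Bass-monotone-convergence-23} (and your own fallback remark) suggests.

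Your approach is genuinely different and, as far as I can see, correct. The symmetry reduction in Step~1 is right: under~\eqref{eq:het_conditions} the influence rate $q_{k\to\Omega}$ depends only on the receiving nodes, so $[S_\Omega]$ indeed depends only on $(n_1,n_2)$, and the reduced master equation you wrote is exact. The product ansatz $V_{n_1,n_2}=u_1^{n_1}u_2^{n_2}$ is the key idea and it works cleanly because the limiting system factorizes. The residual bound $|r_{n_1,n_2}|\le\tfrac{C}{M}(n_1+n_2)^2$ is correct. In Step~3, the generator has nonnegative off-diagonal entries and row sums $-(n_1p^1+n_2p^2)\le0$, so the evolution family is sub-stochastic and Duhamel applies; the quantity $\sum U(t,\tau)_{(1,0),(n_1,n_2)}(n_1+n_2)^2$ is precisely $\E_{(1,0)}\big[(n_1+n_2)^2;\text{ not killed by }t-\tau\big]$ for the associated level-increasing chain, and since the total up-rate from level $\ell$ is $\tfrac{(n_1q^1+n_2q^2)(M-\ell)}{M-1}\le\bar q\,\ell$ with $\bar q=\sup_{[0,T]}\max(q^1,q^2)$, stochastic domination by a Yule process with rate~$\bar q$ gives the uniform-in-$M$ second-moment bound (e.g.\ $\E N_s^2=2e^{2\bar qs}-e^{\bar qs}$). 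That closes the argument with the quantitative rate $O(1/M)$.

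What each approach buys: the monotone-convergence route of~\cite{Bass-monotone-convergence-23} avoids the moment estimate entirely by showing $f^{\rm complete-het}(t;M)$ is nondecreasing in~$M$, so existence of the limit is free and one only needs to identify it; this is softer but gives no rate. Your Duhamel/Gr\"onwall route is more hands-on but yields an explicit $O(1/M)$ error bound uniformly on compact time intervals, which is strictly more information than the theorem asserts.
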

		
		The exact reduction of the aggregate dynamics to two ODEs enables us to formulate the optimal promotion as a boundary-value problem with four equations (see Theorem~\ref{thm:opt_group_same}). Solving these equations numerically suggests that the heterogeneity of the population does not qualitatively affect the optimal promotion. For example, In Figure~\ref{fig:pq_opt_groups} we plot the optimal promotion when the first group has the same parameters as in Figure~\ref{fig:pq_opt}, and all the parameters of the second group are doubled. As always, $s_p^{\rm opt}(t)$ decreases, whereas $s_q^{\rm opt}(t)$ increases from zero to a global maximum and then decreases. The relative sizes of $s_p^{\rm opt}(t)$ and $s_q^{\rm opt}(t)$ are similar to those in the homogeneous case, i.e., $s_p^{\rm opt}(t)$ is initially dominant, but as more individuals adopt, $s_q^{\rm opt}(t)$ becomes dominant. Additionally, $s_p^{\rm opt}(t)+s_q^{\rm opt}(t)$ approaches zero when $f^{\rm opt}$ approaches one. Hence, in this case, the heterogeneity of the network does not qualitatively affect the optimal promotion.

		\begin{figure}[ht!]
			\centering
			%\scalebox{0.71}{\includegraphics{opt_groups.eps}}
			\scalebox{0.65}{\includegraphics{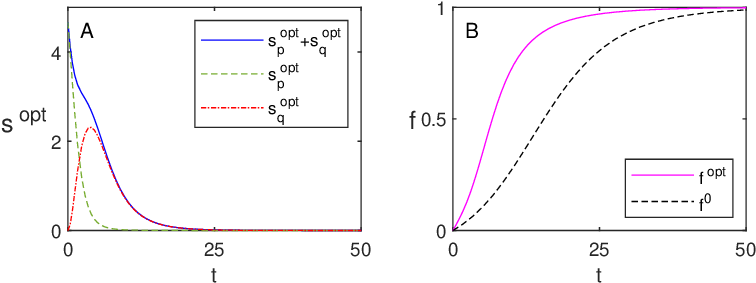}}
			\caption{Same as Figure~\ref{fig:pq_opt_M2}, on a heterogeneous network~\eqref{eq:het_conditions} with two homogeneous groups where $p_0^1=0.01$, $p_0^2=0.02$, $q_0^1=0.1$, $q_0^2=0.2$, $b_{p}^1=0.01$, $b_{p}^2=0.02$, $b_{q^1}=0.1$, and $b_{q^2}=0.2$. Here $\Delta\Pi^{\rm opt}_{\rm rel}\approx 6.2\%$.}
			\label{fig:pq_opt_groups}
		\end{figure}

		\subsection{Targeted promotions}
		
		Assume that each group can be targeted with a distinct promotion policy $(s_p^k, s_q^k)$ for $k=1,2$. Then
		\begin{equation}
			\label{eq:p_q_s_diff}
			p_k(s_{p}^k):=p_0^k+b_{p}^k\sqrt{s_{p}^k}, \qquad q_k(s_{q}^k):=q_0^k+b_{q}^k\sqrt{s_{q}^k}, \qquad k=1,2.
		\end{equation}
		The exact reduction of the aggregate dynamics to two ODEs enables us to formulate the optimal promotion as a boundary-value problem with four equations (see Theorem~\ref{thm:opt_group_diff}). Figure~\ref{fig:pq_opt_groups_diff} illustrates the targeted optimal promotion policies for the same heterogeneous population considered in Figure~\ref{fig:pq_opt_groups}. We observe that the promotion level for group~2 is significantly higher than under the uniform policy, which itself exceeds the promotion level for group~1, i.e., $s_{p}^{2, \rm opt}(t)+s_{q}^{2, \rm opt}(t)> s_p^{\rm opt}+s_q^{\rm opt}>s_{p}^{1, \rm opt}(t)+s_{q}^{1, \rm opt}(t)$, reflecting the larger values of $b_p^2$ and $b_q^2$. Hence, the faster adoption in group~2 compensates for the lower promotion effort allocated to group~1. Interestingly, {\em the effectiveness of the heterogeneous optimal promotion $(\Delta\Pi^{\rm opt}_{\rm rel}\approx 6.3\%)$ is only marginally higher than of a homogeneous one $(\Delta\Pi^{\rm opt}_{\rm rel}\approx 6.2\%)$}. 
		
		%so interestingly, $s_{p}^{1, \rm opt}(t)+s_{q}^{1, \rm opt}(t)$ behaves similarly to the behavior in Figure~\ref{fig:p_effect}B. That is, the optimal strategy in group 1 is similar to that in an infinite homogeneous complete network with a slightly higher value of $p_0$. This can be explained by the fact that the higher parameter values in the second groups speed up the adoption rate in the first group in the absence of a promotion. Hence $s_{p}^{1, \rm opt}(t)$ does not need to reach as high of a value. 

		\begin{figure}[ht!]
			\centering
			%\scalebox{0.71}{\includegraphics{opt_groups_diff.eps}}
			\scalebox{0.65}{\includegraphics{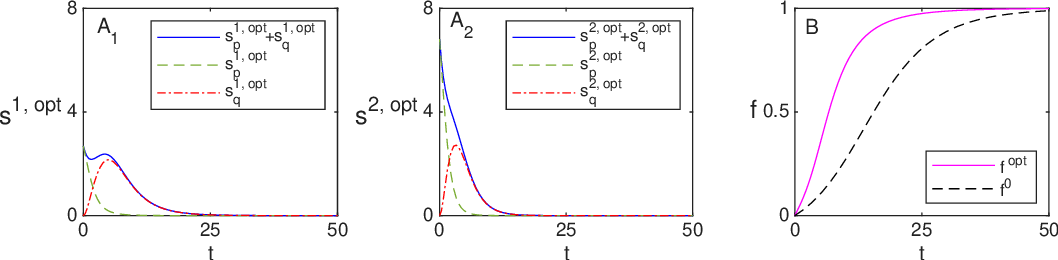}}
			\caption{{Targeted promotion for the heterogeneous population considered in Figure~\ref{fig:pq_opt_groups}.} Panels $A_1$ and $A_2$ show the optimal promotion policies for groups~1 and~2, respectively, while panel~$B$ displays the corresponding adoption dynamics. Here, $\Delta\Pi^{\rm opt}_{\rm rel}\approx 6.3\%$.}
			\label{fig:pq_opt_groups_diff}
		\end{figure}

			\subsection{One-sided spillover effects}
			
			A different two-group setup arises when promotion directed at group~1 also influences individuals in group~2, whereas promotion directed at group~2 affects only that group. This situation naturally occurs, for example, when group~1 consists of native speakers and group~2 represents a linguistic minority. In this case, promotion aimed at group~1 is conducted in the dominant (formal) language, which is also understood by the minority, while promotion aimed at group~2 is delivered in the minority language and is not understood by the native speakers. Therefore, 
			%by the work in~\cite{Fruchter-22}, in which optimal promotions in $p$ in the compartmental Bass model with innovator-imitator dynamics were studied. In particular, Fruchter et al. analyzed adoption in populations in which adoptions in the innovator group increased the adoption rate of the imitator group, and adoptions in the imitator group could trigger disadoptions in the innovator group. They showed that when imitators can cause disadoption in the innovator group, $s_p^{\rm opt}$ is U-shaped, and if there is no disadoption, then $s_p^{\rm opt}$ is monotonically decreasing. 
			%
			%In our work, to be able to use the convergence results from~\cite{Bass-monotone-convergence-23} through Theorem~\ref{thm:groups_convergence}, we slightly change the formulation of the model so that the dynamics are similar to innovator-imitator dynamics, but are expressed through $p(s_p)$ and $q(s_q)$:
			\begin{equation}
				\label{eq:innovator}
				\begin{aligned}
					&p_1(s_p^1):=p_0^1+b_p^1\sqrt{s_p^1}, \quad &&q_1(s_q^1):=q_0^1+b_q^1\sqrt{s_q^1},\\		
					&p_2(s_p^2):=p_0^2+b_p^2\sqrt{s_p^2}+b_p^{1\to 2}\sqrt{s_p^1}, \quad &&q_2(s_q^2):=q_0^2+b_q^2\sqrt{s_q^2}+b_q^{1\to 2}\sqrt{s_q^1},
				\end{aligned}
			\end{equation}
			where the terms $b_p^{1\to 2}\sqrt{s_p^1}$ and $b_q^{1\to 2}\sqrt{s_q^1}$ represent the spillover effect of promotion directed at group~1 on individuals in group~2.
			%
			%In this model, each segment of the population has its own spending rate, but a promotion on group 1 also boosts the adoption rate of group 2. Furthermore, there is no disadoption or negative paramter values, as those contradict the convergence result in Theorem~\ref{thm:groups_convergence}. 
			Figure~\ref{fig:inn} shows the optimal promotion for the same heterogeneous population considered in Figure~\ref{fig:pq_opt_groups_diff}, with the addition of the spillover effect. In this case, $s_{p}^{1,\rm opt}(t)+ s_{q}^{1,\rm opt}(t)> s_{p}^{2,\rm opt}(t)+s_{q}^{1,\rm opt}(t)$, which is the opposite of the behavior observed in Figure~\ref{fig:pq_opt_groups_diff}. Indeed, although group~1 benefits less directly from promotion, its promotion now affects both groups, creating a stronger incentive to allocate resources to it. Furthermore, the effectiveness of the targeted optimal promotion $(\Delta\Pi^{\rm opt}_{\rm rel}\approx 8.5\%)$ is much higher than in the case without spillover  effects $(\Delta\Pi^{\rm opt}_{\rm rel}\approx 6.3\%)$, since the any spending level $(s_p^1,s_q^1)$ generates a larger overall impact.
			
			\begin{figure}[ht!]
				\centering
				%\scalebox{0.71}{\includegraphics{opt_groups_diff.eps}}
				\scalebox{0.5}{\includegraphics{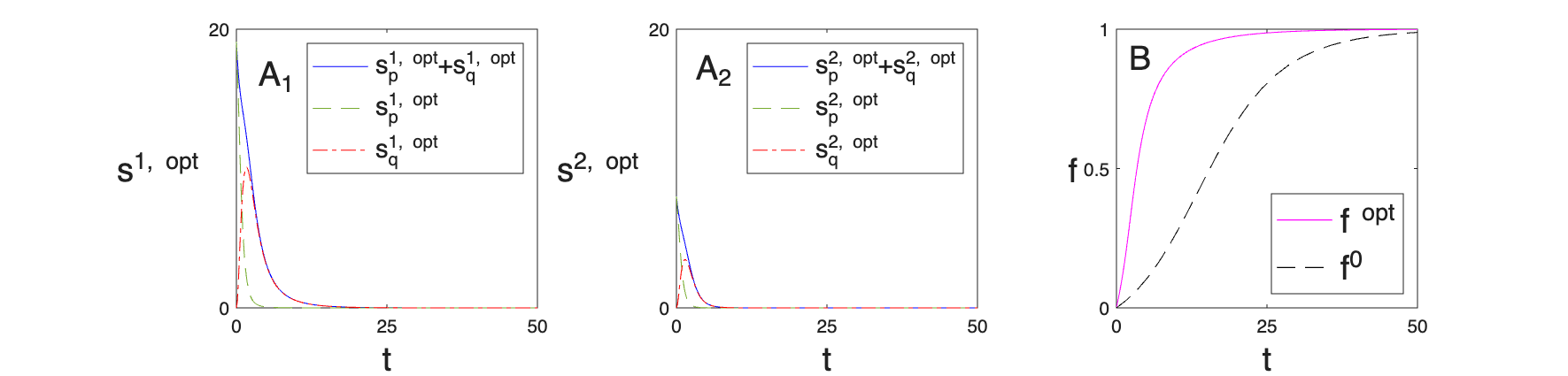}}
				\caption{Same as in Figure~\ref{fig:pq_opt_groups_diff}, but with one-sided spillover effects, see~\eqref{eq:innovator}. Here $b_p^{1\to 2}=b_{p}^2$,  $b_q^{1\to 2}=b_{q}^2$, and $\Delta\Pi^{\rm opt}_{\rm rel}\approx 8.5\%$.}
				\label{fig:inn}
			\end{figure}
			
			\section{Suboptimality of non-uniform promotions}
			\label{sec:suboptimal}
			
			In a homogeneous infinite complete network, is it optimal to promote uniformly across the entire population? The answer is not immediately obvious, since one could instead concentrate advertising efforts on one half of the population and rely on internal influence to drive adoption in the other half. To address this question, we artificially partition the population into two identical subgroups, each of size~$1/2$, which may be subject to different promotion policies  $(s_p^k, s_q^k)$ for $k=1,2$. Let $f_k(t) \in [0,1/2]$ denote the adoption level in subgroup $k=1,2$ at time $t$. By Theorem~\ref{thm:groups_convergence}, the adoption dynamics are given by
			\begin{equation}
				\label{eq:f_k-proof}
				\frac{df_k}{dt}(t) = \Big(\frac{1}{2} - f_k\Big) \Big( p_0 + b_p\sqrt{s_p^k(t)} + \bigl(q_0 +b_q \sqrt{s_q^k(t)}\bigr)\bigl(f_1+f_2\bigr) \Big),
				\quad f_k(0)=0, \qquad k=1,2.
			\end{equation}
			The firm discounted profit is
			\[
			\Pi[s_p^1,s_p^2,s_q^1,s_q^2]
			:= \int_0^T e^{-\theta t}
			\left(
			\gamma \left(\frac{df_1}{dt}(t)+\frac{df_2}{dt}(t)\right)
			- \left(\frac{s_p^1(t)+s_p^2(t)}{2} + \frac{s_q^1(t)+s_q^2(t)}{2}\right)
			\right)dt.
			\]
			We compare this policy with a uniform promotion defined as
			\[
			\sqrt{s_p(t)} := \frac{\sqrt{s_p^1(t)} + \sqrt{s_p^2(t)}}{2},
			\qquad
			\sqrt{s_q(t)} := \frac{\sqrt{s_q^1(t)} + \sqrt{s_q^2(t)}}{2}.
			\]
			Let $f(t)$ be the corresponding adoption trajectory. Then 
			\[
			\frac{df}{dt}(t)
			= (1-f)
			\left(p_0 + b_p\sqrt{s_p(t)} + \bigl(q_0+b_q\sqrt{s_q(t)}\bigr)f\right),
			\qquad f(0)=0.
			\]
			The associated firm profit is
			\[
			\Pi[s_p,s_q]
			:= \int_0^T e^{-\theta t}
			\left(
			\gamma \frac{df}{dt}(t) - \bigl(s_p(t)+s_q(t)\bigr)
			\right)dt.
			\]

			\begin{theorem}
				\label{thm:non_uniform}
				Assume $\gamma > 0$, $\theta \ge 0$, and $0<T\le \infty$. For any nonnegative piecewise-continuous promotions $\{s_p^1(t), s_p^2(t), s_q^1(t), s_q^2(t)\}$, the uniform promotion dominates the non-uniform policy, that is, 
				$
				\Pi[s_p^1,s_p^2,s_q^1,s_q^2] \le \Pi[s_p,s_q].
				$ 
				Furthermore, an equality holds if and only if $s_p^1(t)=s_p^2(t)$ and $s_q^1(t) = s_q^2(t)$ for all $t \in [0, T]$. 
			\end{theorem}
			\begin{proof}
				See Appendix~\ref{app:thm:non_uniform}.
			\end{proof}
			
			Since Theorem~\ref{thm:non_uniform} applies to all non-uniform promotion policies, it follows in particular that the optimal promotion strategy is uniform:
			\begin{corollary}
				\label{cor:non_uniform}
				In a homogeneous infinite complete network, it is suboptimal to implement a different promotion to each half of the population.
			\end{corollary}
			
			\section{Alternative models for the impact of promotions}
			\label{sec:Alternative-models}
			
			Thus far, we have assumed that~$p$ and~$q$ depend on promotional spending through the square-root functional form in~\eqref{eq:p_q_s}. In this section, we examine the robustness of this assumption by considering several alternative specifications. A natural generalization is given by a 
			$k$th root representation:
			\begin{equation}
				\label{eq:p_q_s_1/n}
				p(s_p) := p_0 + b_p \sqrt[k]{s_p}, \quad
				q(s_q) := q_0 + b_q \sqrt[k]{s_q}, \qquad
				s_p, s_q \geq 0, \quad k = 2,3,\dots
			\end{equation}
			Alternatively, following \cite{Horsky-83}, one may consider a logarithmic form:
			\begin{equation}
				\label{eq:p_q_s_ln}
				p(s_p) := p_0 + b_p \ln(1 + s_p), \qquad
				q(s_q) := q_0 + b_q \ln(1 + s_q).
			\end{equation}
			Both forms are increasing and concave in promotional spending, thereby capturing diminishing marginal returns.
			
			Applying the methodology developed in this manuscript to these alternative spend-to-influence functional forms is straightforward. In particular, in the case of {\em infinite complete networks}, Corollary~\ref{cor:opt_complete_pq} extends as follows.
			
			\begin{corollary}
				\label{cor:opt_complete_1/n}
				Assume the conditions of Theorem~{\rm \ref{lem:opt_complete_pq}}. 
				\begin{enumerate}
					\item If the effect of promotion is given by~\eqref{eq:p_q_s_1/n}, then equation~\eqref{eq:H_inf} simplifies to
					\begin{equation}
						\label{eq:s_lambda_1/n}
						s^{\rm opt}_p(t)
						=
						\left(\frac{b_p(1-f)(\Psi e^{\theta t}+\gamma)}{k}\right)^{\frac{k}{k-1}},
						\quad
						s^{\rm opt}_q(t)
						=
						\left(\frac{b_q}{b_p}f\right)^{\frac{k}{k-1}} s^{\rm opt}_p(t),
						\qquad k=2,3,\dots
					\end{equation}
					
					\item If the effect of promotion is given by~\eqref{eq:p_q_s_ln}, then equation~\eqref{eq:H_inf} simplifies to
					\begin{equation}
						\label{eq:s_lambda_ln}
						s^{\rm opt}_p(t)
						=
						\Big( b_p(1-f)(\Psi e^{\theta t}+\gamma) - 1 \Big)^+,
						\quad
						s^{\rm opt}_q(t)
						=
						\Big( b_q(1-f)f(\Psi e^{\theta t}+\gamma) - 1 \Big)^+,
					\end{equation}
				\end{enumerate}
				where $x^+ := \max\{x,0\}$ denotes the positive part of $x$.
			\end{corollary}
			\begin{proof}
				See Appendix~\ref{app:cor:opt_complete_1/n}.
			\end{proof}

			\begin{figure}[ht!]
				\centering
				\scalebox{0.35}{\includegraphics{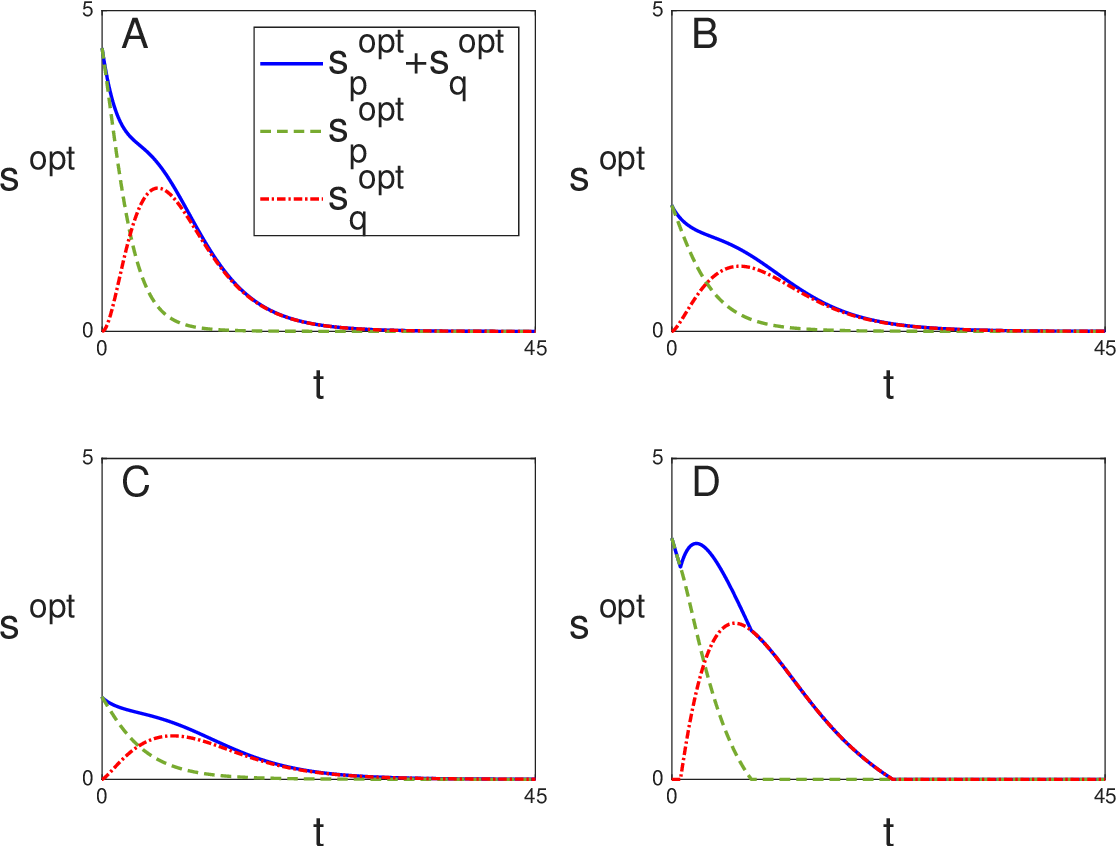}}
				\caption{Optimal promotion on an infinite complete network. 
					(A) Same as Figure~\ref{fig:pq_opt}. 
					(B) Same as (A), but with $p(s_p)$ and $q(s_q)$ given by~\eqref{eq:p_q_s_1/n} with $k=3$ ($\Delta\Pi^{\rm opt}_{\rm rel}=8.5\%$). 
					(C) Same as (A), but with $k=4$ ($\Delta\Pi^{\rm opt}_{\rm rel}=8.8\%$). 
					(D) Same as (A), but with $p(s_p)$ and $q(s_q)$ given by the logarithmic relation~\eqref{eq:p_q_s_ln} ($\Delta\Pi^{\rm opt}_{\rm rel}=6.0\%$).}
				%	\caption{The optimal promotion on an infinite complete network. (A)~Same as Figure~\ref{fig:pq_opt}. 
					%	(B)~Same as~(A), but with  $p(s_p)$ and~$q(s_q)$ given by~\eqref{eq:p_q_s_1/n} with $k=3$ ($\Delta\Pi^{\rm opt}_{\rm rel}=8.5\%$). (C) Same as~(B), but with $k=4$ ($\Delta\Pi^{\rm opt}_{\rm rel}=8.8\%$). (D) Same as~(B), but with $p(s_p)$ and~$q(s_q)$ given by the logarithmic relation~\eqref{eq:p_q_s_ln} ($\Delta\Pi^{\rm opt}_{\rm rel}=6.0\%$).}
				\label{fig:robust}
			\end{figure}
			
			Plotting the optimal promotion strategies for these alternative models in Figure~\ref{fig:robust} reveals the same qualitative behavior observed in Figure~\ref{fig:pq_opt}: $s_p^{\rm opt}(t)$ decreases monotonically over time, whereas $s_q^{\rm opt}(t)$ initially increases from zero to a peak and subsequently decreases.
			
			Similarly, in the case of {\em finite complete networks}, Corollary~\ref{cor:opt_complete_M} extends as follows.
			
			\begin{corollary}
				\label{cor:opt_complete_M_1/n}
				Assume the conditions of Theorem~{\rm \ref{lem:opt_M}}. 
				\begin{enumerate}
					\item If the effect of promotion is given by~\eqref{eq:p_q_s_1/n}, then equation~\eqref{eq:H_comp} simplifies to 
					\begin{equation}
						\label{eq:s_lambda_pq_M_1/n}
						\begin{aligned}
							s^{\rm opt}_p(t)
							&=
							\left(\frac{b_p}{k}\Big(\gamma[S]-e^{\theta t}\sum_{n=1}^{M}\Psi_n [S^n]\Big)\right)^{\frac{k}{k-1}},\\
							s^{\rm opt}_q(t)
							&=
							\left(\frac{b_q}{k}\Big(\gamma\left([S]-[S^2]\right)-e^{\theta t}\sum_{n=1}^{M-1}\Psi_n n \frac{M-n}{M-1}\big([S^n]-[S^{n+1}]\big)\Big)\right)^{\frac{k}{k-1}}.
						\end{aligned}
					\end{equation}
					
					\item If the effect of promotion is given by~\eqref{eq:p_q_s_ln}, then equation~\eqref{eq:H_comp} simplifies to
					\begin{equation}
						\label{eq:s_lambda_pq_M_ln}
						\begin{aligned}
							s^{\rm opt}_p(t)
							&=
							\Big(b_p\Big(\gamma[S]-e^{\theta t}\sum_{n=1}^{M}\Psi_n [S^n]\Big)-1 \Big)^+,\\
							s^{\rm opt}_q(t)
							&=
							\Big(b_q\Big(\gamma\left([S]-[S^2]\right)-e^{\theta t}\sum_{n=1}^{M-1}\Psi_n n \frac{M-n}{M-1}\big([S^n]-[S^{n+1}]\big)\Big)-1\Big)^+.
						\end{aligned}
					\end{equation}
					
				\end{enumerate}
			\end{corollary}
			%
			%Similarly, in the case of {\em finite complete networks}, Corollary~\ref{cor:opt_complete_M} generalizes as follows. 
			%\begin{corollary}
			%\label{cor:opt_complete_M_1/n}	Assume the conditions of Theorem~{\rm \ref{lem:opt_M}}. 
			%\begin{enumerate}
			%\item If the effect of the promotion be given by~\eqref{eq:p_q_s_1/n}, then  equation~\eqref{eq:H_comp} reduces to 
			%\begin{equation}
			%	\label{eq:s_lambda_pq_M_1/n}
			%	\begin{aligned}
				%				s^{\rm opt}_p(t)&=\Bigg(\frac{b_p}{k}\Big(\gamma[S]-e^{\theta t}\sum_{n=1}^{M}\Psi_n [S^n]\Big)\Bigg)^{\frac{k}{k-1}},\\
				%				s^{\rm opt}_q(t)&=\Bigg(\frac{b_q}{k}\Big(\gamma\left([S]-[S^2]\right)-e^{\theta t}\sum_{n=1}^{M-1}\Psi_n n \frac{M-n}{M-1}\big([S^n]-[S^{n+1}]\big)\Big)\Bigg)^{\frac{k}{k-1}}.
				%			\end{aligned}
			%		\end{equation}
		%		
		%\item If the effect of the promotion be given by~\eqref{eq:p_q_s_ln}, then equation~\eqref{eq:H_comp} reduces to
		%\begin{equation}
		%			\label{eq:s_lambda_pq_M_ln}
		%			\begin{aligned}
			%				s^{\rm opt}_p(t)&= \Big(b_p\Big(\gamma[S]-e^{\theta t}\sum_{n=1}^{M}\Psi_n [S^n]\Big)-1 \Big)^+,\\
			%				s^{\rm opt}_q(t)&=\Big( b_q\Big(\gamma\left([S]-[S^2]\right)-e^{\theta t}\sum_{n=1}^{M-1}\Psi_n n \frac{M-n}{M-1}\big([S^n]-[S^{n+1}]\big)\Big)-1\Big)^+.
			%			\end{aligned}
		%		\end{equation}
	%		
	%	\end{enumerate}
%\end{corollary}
\begin{proof}
	See Appendix~\ref{app:cor:opt_complete_M_1/n}.
\end{proof}

\begin{figure}[ht!]
	\centering
	\scalebox{0.4}{\includegraphics{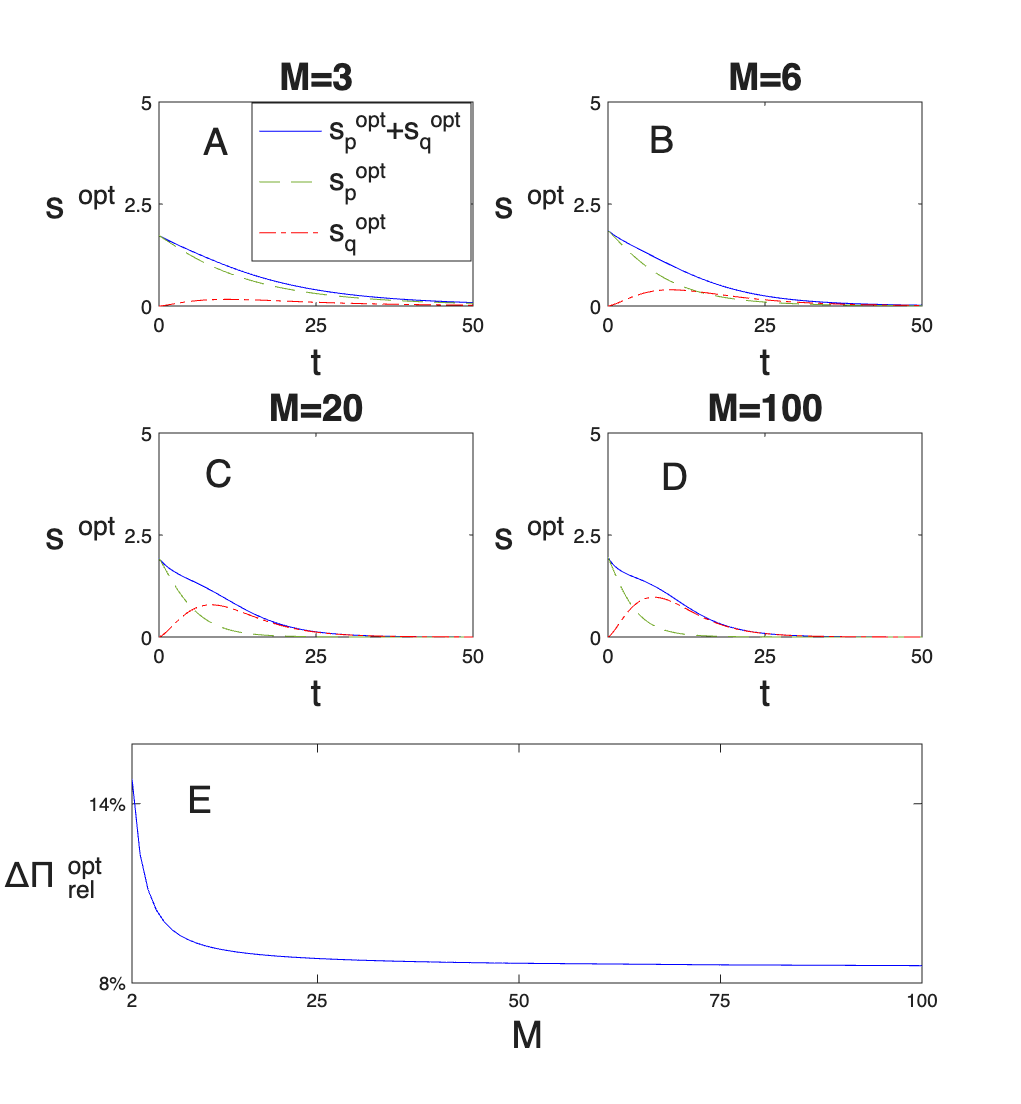}}
	\caption{Same as Figure~\ref{fig:pq_opt_M}, but with~$p(s_p)$ and~$q(s_q)$ given by~\eqref{eq:s_lambda_1/n} with $k=3$.}
	\label{fig:pq_opt_M_1/n}
\end{figure}

\begin{figure}[ht!]
	\centering
	\scalebox{0.4}{\includegraphics{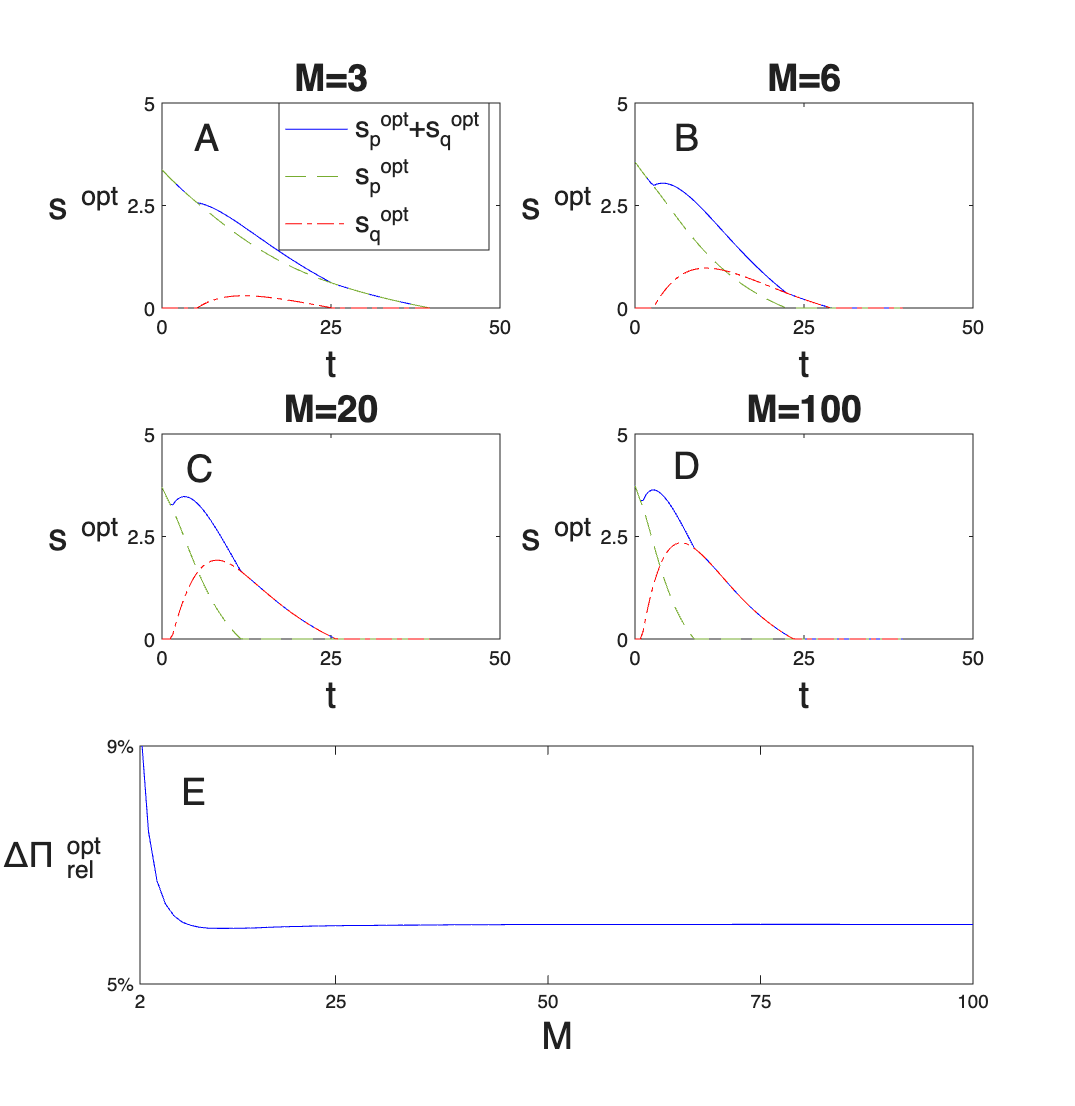}}
	\caption{Same as Figure~\ref{fig:pq_opt_M}, but with~$p(s_p)$ and~$q(s_q)$ given by~\eqref{eq:s_lambda_ln}.}
	\label{fig:pq_opt_M_ln}
\end{figure}

Plotting the optimal promotion strategies for these alternative models in Figures~\ref{fig:pq_opt_M_1/n} and~\ref{fig:pq_opt_M_ln} reveals the same qualitative behavior observed in Figure~\ref{fig:pq_opt_M}: $s_p^{\rm opt}(t)$ decreases monotonically over time, while $s_q^{\rm opt}(t)$ initially increases from zero to a peak and subsequently decreases. Moreover, as $M$ increases, the relative magnitude of $s_q^{\rm opt}(t)$ becomes larger, and the relative impact of the optimal promotion, $\Delta\Pi^{\rm opt}_{\rm rel}$, decreases.

There is, however, a {\em novel qualitative behavior} observed in Figures~\ref{fig:robust}D and~\ref{fig:pq_opt_M_ln}: when $p(s_p)$ and $q(s_q)$ follow the logarithmic relation~\eqref{eq:s_lambda_ln}, there exist time intervals during which either $s_p^{\rm opt}(t)$ or $s_q^{\rm opt}(t)$ is identically zero. To understand why this behavior arises under the logarithmic relation~\eqref{eq:s_lambda_ln}, but not under the $k$th root relation~\eqref{eq:p_q_s_1/n}, consider the profit rate for $0 < s_p \ll 1$ (the analysis for $s_q$ is analogous). For example, in the case of infinite complete networks, it follows from~\eqref{eq:profit-b} and~\eqref{eq:bvp_pq} that 
\[
\pi(s_p) - \pi(0) \sim e^{-\theta t}\Big(\gamma(1-f)\big(p(s_p)-p_0\big)-s_p \Big).
\]
In the case of the $k$-root relation,
\[
\gamma(1-f)\big(p(s_p)-p_0\big)-s_p 
= \gamma(1-f)b_p \sqrt[k]{s_p} - s_p > 0, 
\qquad 0 < s_p \ll 1.
\]
so even an arbitrarily small positive promotion level strictly improves profit relative to zero, implying that $s_p=0$ is never optimal. In contrast, under the logarithmic relation,
\[
\gamma(1-f)\big(p(s_p)-p_0\big)-s_p 
\sim \Big(\gamma(1-f)b_p - 1\Big)s_p,  
\qquad 0 < s_p \ll 1.
\]
This expression becomes negative when $\gamma(1-f)b_p < 1$. Therefore, as $f \to 1$, it is optimal to set the promotion level to zero.

\section{Feedback (closed-loop) optimal promotions}
\label{sec:closed-loop}

So far, we have considered promotional strategies in which the optimal control ${\bf s}^{\rm opt}(t)$ is predetermined and does not depend on the realized adoption process. In practice, however, it is desirable to allow the promotional strategy to respond to the current state of the system. For example, for the compartmental Bass model~\eqref{eq:Bass_intro}, the optimal feedback (closed-loop) control $s_p^{\text{opt-feedback}}$ depends both on time and on the current adoption level. It satisfies the optimality condition
\begin{equation*}
	s_p^{\text{opt-feedback}}(t,f)
	:=
	\arg\max_{s_p(t,f)}
	\int_{t_0}^{T}
	\Big(
	\gamma \frac{df}{dt}
	-
	s_p(t,f(t))
	\Big)
	e^{-\theta t}
	\, dt,
\end{equation*}
where
\begin{equation*}
	\frac{df}{dt}
	=
	(1 - f)\Big( p + b_p \sqrt{s_p(t,f(t))} + qf \Big),
	\qquad
	f(t_0) = f_0,
\end{equation*}
for all $0 \le t_0 < T$ and $0 \le f_0 < 1$. The computation of optimal feedback strategies is typically carried out using the Hamilton–Jacobi–Bellman (HJB) equation (see, e.g.,~\cite{kamienschwartz}). To the best of our knowledge, feedback-optimal promotion policies for compartmental Bass models have only been studied by Fruchter et al.~\cite{Fruchter-22}, who employed the HJB framework. 

We now turn to feedback-optimal promotions for the Bass model on networks. In this setting, the state of the system at time $t$ is given by the realized adoption level,
\(
\phi(t) := \frac{1}{M} \sum_{j=1}^{M} X_j(t),
\)
where the random variables $\{X_j(t)\}$ evolve according to~\eqref{eqs:Bass-model}. The optimal feedback strategy ${\bf s}^{\text{opt-feedback}}$ therefore depends on both time~$t$ and current adoption level~$\phi(t)$, and is defined as
\begin{equation*}
	{\bf s}^{\text{opt-feedback}}(t,\phi)
	:=
	\arg\max_{{\bf s}(t,\phi)}
	\int_{t_0}^{T} 
	e^{-\theta t} \Big(
	\gamma \frac{df}{dt}
	-
	\mathbb{E}\big[
	s_p(t,\phi(t)) + s_q(t,\phi(t))
	\big]
	\Big)
	\, dt,
\end{equation*}
where $f(t) := \mathbb{E}[\phi(t)]$. Thus, at any given time~$t_0$ and adoption level $\phi_0$, the optimal promotion policy is the one that maximizes the expected discounted profit over the remaining time horizon.

To apply the HJB methodology, one requires an ODE for~$\frac{df}{dt}$ on networks, analogous to equation~\eqref{eq:bvp_pq} in the open-loop setting. Deriving such an equation, however, is not a straightforward extension of our open-loop approach. Indeed, under feedback control, the adoption rate of node~$j$ changes from~\eqref{eq:pjqj} to
\begin{equation}
	\label{eq:lambda-feedback}
	\lambda_j^{\text{feedback}}(t)
	:=
	p_j\big(s_p(t,\phi(t))\big)
	+
	\sum_{k=1}^{M}
	q_{k\to j}\big(s_q(t,\phi(t))\big) X_k(t).
\end{equation}
Hence, the adoption rate depends nonlinearly on the $\{X_j(t)\}$ through the state variable~$\phi(t)$. Consequently, the system of master equations~\eqref{eq:master_general}, whose derivation relied on the linear dependencen~$\{X_j\}$, is no longer valid. One must therefore first derive a new system of master equations for the network Bass model~\eqref{eqs:Bass-model} with the feedback-dependent adoption rate~\eqref{eq:lambda-feedback}. After obtaining these equations, it would be necessary to derive reduced master equations for structured networks (e.g., complete graphs or line networks) in order to compute
$
\frac{df}{dt}:= \frac{d}{dt}\mathbb{E}[\phi(t)].
$
An additional complication arises from the fact that
\[
\mathbb{E}[{\bf s}(t,\phi(t))]
\neq
{\bf s}(t,\mathbb{E}[\phi(t)])
=
{\bf s}(t,f(t)).
\]
In summary, computing feedback-optimal strategies for the Bass model on networks constitutes an important and challenging problem. Addressing it, however, requires substantial additional analysis that lies beyond the scope of the present manuscript.

	\section{An alternative profit formulation}
	\label{sec:alternative}
	
	In the profit expression~\eqref{eq:profit-b} considered thus far, the costs of promotions in~$p$ and~$q$ are independent of the numbers of adopters and non-adopters. Implicitly, this assumes that all consumers are targeted by these promotions, regardless of their adoption status. In some cases, however, advertising costs may depend on the composition of the consumer base. For example, targeted advertising aimed specifically at non-adopters generates costs proportional to the number of non-adopters. Likewise, the cost of a promotion in~$q$ may scale with the number of adopter–non-adopter interactions, as in referral programs that offer discounts for successful recommendations. In this section, we show that our methodology can be extended to accommodate such alternative profit specifications. For simplicity, we restrict attention to infinite complete networks.

		Let $s_p$ denote the spending level on external promotions {\em per non-adopter} (rather than per individual). The total spending rate on external promotions is therefore proportional to the fraction of nonadopters and is given by $(1-f)s_p$. Similarly, Let~$s_q$ denote the spending level on internal promotions {\em per adopter-nonadopter interaction}. The corresponding spending rate is then proportional to the product of the fractions of adopters and nonadopters and given by  $f(1-f)s_q$. Under this specification, the instantaneous profit rate becomes
		\begin{equation}
			\label{eq:scale2}
			\pi({\bf s}(t)) = e^{-\theta t}\Big(\gamma \frac{df}{dt}-\big((1-f)s_p(t)+f(1-f)s_q(t)\big) \Big).
		\end{equation}
		Importantly, these modifications affect only the cost structure and do not alter the underlying Bass adoption dynamics.
		
		\begin{theorem}
			\label{thm:opt_complete_scale2}
			Consider the Bass model~\eqref{eqs:Bass-model} on the homogeneous complete network~\eqref{eq:complete} in the limit $M \to \infty$. Suppose that the promotional effects are given by~\eqref{eq:p_q_s} and that the profit rate is defined by~\eqref{eq:scale2}. Then the optimal promotion policy ${\bf s}^{\rm opt} := (s_p^{\rm opt}, s_q^{\rm opt})$, see~\eqref{eq:profit-a}, is given by 
			\begin{equation}
				\label{eq:s_lambda_pq_scale2}
				s^{\rm opt}_p(t)=\frac{b_p^2}{4}\Big((\Psi(t) e^{\theta t}+\gamma)\Big)^2,\qquad
				s^{\rm opt}_q(t)=\frac{b_q^2}{b_p^2}s^{\rm opt}_p(t),
			\end{equation}
			%	
			%	
			%	satisfies the first-order conditions
			%	\begin{equation}
				%		\label{eq:H_inf_scale2}
				%		\frac{\partial H}{\partial s_p} = \frac{\partial H}{\partial s_q} = 0,
				%		\qquad
				%		H := \Big(\gamma \frac{df}{dt} - (1-f)s_p(t) - f(1-f)s_q(t)\Big)e^{-\theta t} + \Psi \frac{df}{dt},
				%	\end{equation}
			where $f(t)$ and $\Psi(t)$ solve the boundary-value problem
			\begin{equation*}
				%	\label{eq:bvp_pq_scale2}
				\begin{aligned}
					\frac{df}{dt} &= (1-f)\Big(p(s_p(t)) + q(s_q(t))f\Big),
					\qquad && f(0) = 0, \\
					\frac{d\Psi}{dt} &= \Big(\gamma e^{-\theta t} + \Psi\Big)\Big(p(s_p(t)) + q(s_q(t))(2f - 1)\Big) 
					- \Big(s_p(t) + (2f - 1)s_q(t)\Big)e^{-\theta t},
					\qquad && \Psi(T) = 0.
				\end{aligned}
			\end{equation*}
		\end{theorem}
		\begin{proof}
			See Appendix~\ref{app:opt_complete_scale2}.
		\end{proof}	  
		
		\begin{figure}[ht!]
			\centering
			\scalebox{0.5}{\includegraphics{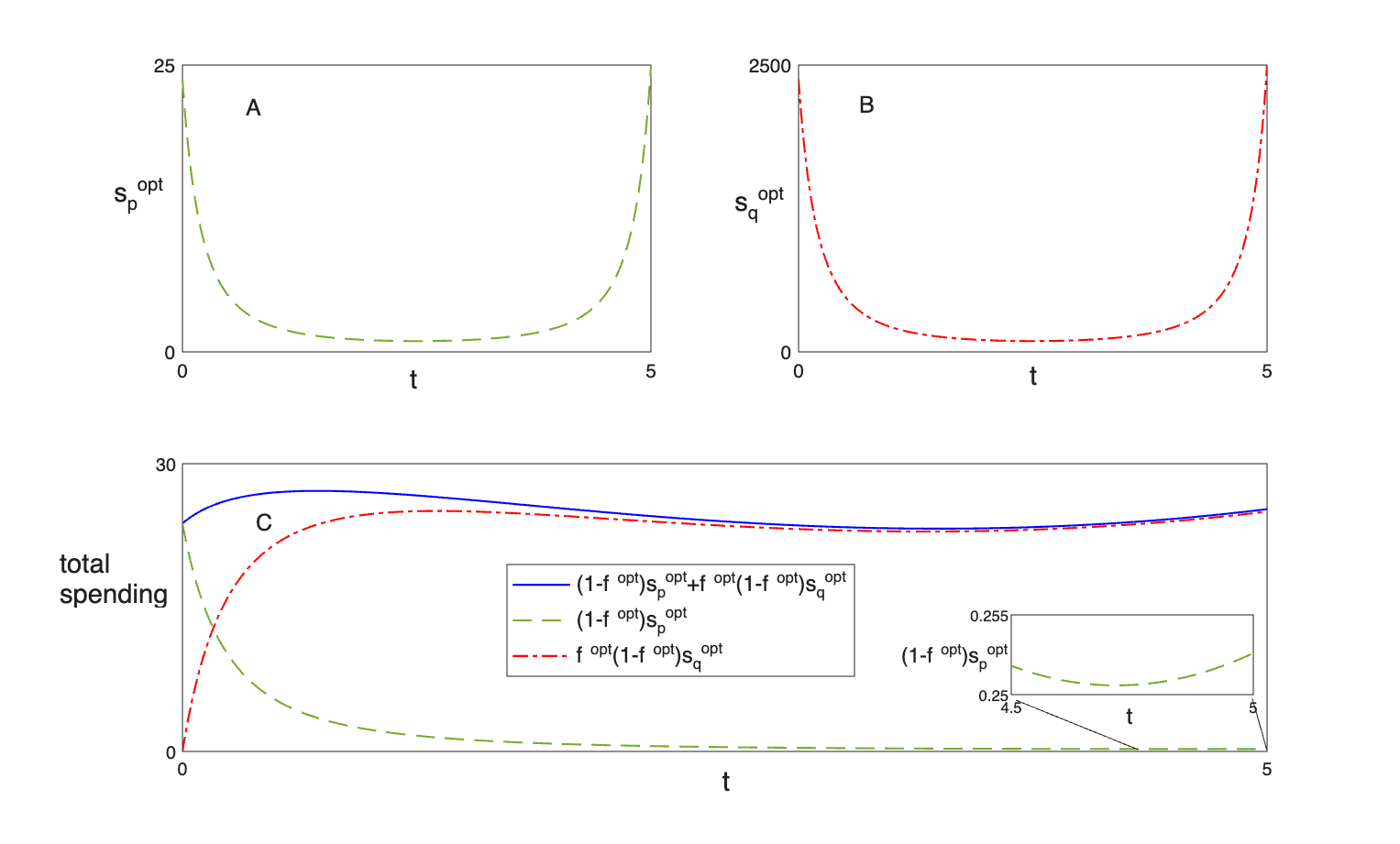}}
			\caption{(A) The optimal external promotion level as a function of time for the same Bass model as in Figure~\ref{fig:pq_opt}, but with the profit rate given by~\eqref{eq:scale2}. (B) Same as in (A), for the internal promotion level. (C) The corresponding spending rates on external promotion (dashed), internal promotion (dash-dot), and their total (solid).  Inset shows that $(1-f^{\rm opt})s_p^{\rm opt}$ undergoes a late-term surge.}
			\label{fig:prop2}
			\label{fig:prop2}
		\end{figure}
		
		%  This directly relates to the concepts of expansion and acceleration from~\ref{subsec:eff}. The benefit of investing in a promotion early is that it allows the early adopters to influence other nodes to adopt. However, every node that adopts the product during an early promotion, would have adopted the product without a promotion, so promoting early only accelerates their adoption. On the other hand, at the very end of the time horizon, a promotion reaches nodes that would not have adopted otherwise. 
		
		The dynamics of $s_p^{\rm opt}(t)$ and $s_q^{\rm opt}(t)$ are identical up to a multiplicative constant; see~\eqref{eq:s_lambda_pq_scale2} and Figure~\ref{fig:prop2}. Both decline monotonically over most of the horizon. Intuitively, earlier adoptions are more valuable, as they have more time to generate subsequent adoptions through peer effects. As the end of the promotion period approaches, however, both $s_p^{\rm opt}(t)$ and $s_q^{\rm opt}(t)$ increase sharply. This late-stage surge reflects the diminishing value of future diffusion: as $t\to T$, the planner can no longer influence post-horizon adoption. Consequently, the optimal promotion becomes myopic (see Corollary~\ref{cor:end}), and is directed toward attracting consumers who would otherwise not adopt before the terminal time, thereby increasing the {\em expansion}.
		%\big(s_p^{\rm opt}(T), s_q^{\rm opt}(T)\big)
		%=
		%\left(\frac{b_p^2\gamma^2}{4}, \frac{b_q^2\gamma^2}{4}\right)
		%=
		%\arg \max_{{\bf s}(T)} \pi(T),
		%\]
		%where
		%\begin{equation*}
		%	\pi(T)
		%	=
		%	e^{-\theta T}\Big(
		%	\gamma (1-f(T))\big(p(s_p(T)) + q(s_q(T)) f(T)\big)
		%	- (1-f(T))s_p(T)
		%	- (1-f(T)) f(T) s_q(T)
		%	\Big).
		%\end{equation*}
		%Thus, at the terminal time the planner maximizes instantaneous profit, disregarding any 
		%future diffusion effects beyond the horizon can be disregarded.
		
		The total spending rate on external promotions, $(1-f(t))s_p(t)$, declines over most of the horizon as the pool of susceptible consumers shrinks. As $t\to T$, however, it rises due to the late-stage myopic surge discussed above.
		The total spending rate on internal promotions, $f(t)(1-f(t))s_q(t)$, is zero at $t=0$, since no adopters are initially present. It increases as the adopter base expands, reflecting the growing scope for peer influence, then declines as the susceptible population contracts. As $t\to T$, however, internal promotion spending rises again, driven by the same myopic effect. The key distinction relative to the original profit formulation (see Section~\ref{sec:init}) is precisely this late-stage surge. It arises because promotions target only non-adopters, so even last-minute conversions yield positive returns.

			\appendix
			%%%%%%%%%%%%%%%%%%%%%%%%%%%%%%%%%%%%%%%%%%%%%%%%%%%%%%%%%%%%%%%%%%%%%%%%
			\section{Proof of Theorem~\ref{thm:gen}}
			\label{app:thm:gen}
			The expected adoption level is 
			\begin{equation}
				\label{eq:exp_adoption}
				f(t)=1-\frac{1}{M}\sum_{j=1}^{M}[S_{j}](t),
			\end{equation}
			see~\eqref{eq:frac_adopters}.
			Therefore, \eqref{eq:profit-b} can be rewritten as
			\begin{equation}
				\label{eq:profit_general}
				\pi({\bf s}(t))=
				e^{-\theta t} \Big(-\frac{\gamma}{M} \sum_{j=1}^{M}\frac{d[S_j]}{dt}-\big(s_p(t)+s_q(t)\big) \Big).
			\end{equation}
			Let us recall Pontryagin's Maximum Principle from optimal control theory (see e.g.,~\cite{kamienschwartz}):
			
			\begin{theorem}
				\label{thm:pontryagin}
				Consider the maximization problem
				\begin{equation}
					\label{eq:prob}
					{\bf s}^{\rm opt}(t)={\rm argmax}_{{\bf s}(t)}\Pi\left[{\bf s}(t)\right], \qquad \Pi:=\int_{t=0}^{T}\pi(t, {\bf x}(t), {\bf s}(t))\, dt,
				\end{equation}
				where ${\bf x}(t):=\left(x_1(t),\dots,x_n(t)\right): [0, T]\to\mathbb{R}^n$ is the continuous and differentiable solution of
				\begin{subequations}
					\label{eqs:max_prob}
					\begin{equation}
						\label{eq:f_ode}
						\frac{d{\bf x}}{dt}={\bf g}(t, {\bf x}(t),{\bf s}(t)), \qquad {\bf x}(0)={\bf x}_0,
					\end{equation}
					and ${\bf s}(t):=\left(s_1(t),\dots,s_m(t)\right)\in {\mathcal S}^m$ is the control function.
					Let
					$
					H(t,{\bf x},{\bf s},\boldsymbol{\Psi}):=\pi(t,{\bf x},{\bf s})+\sum_{k=1}^{n}\Psi_k(t)g_k(t,{\bf x},{\bf s}),
					$
					where $\boldsymbol{\Psi}(t)=\left(\Psi_1(t),\dots,\Psi_n(t)\right): [0, T]\to\mathbb{R}^n$, and $\Psi_i(t)$ satisfies
					\begin{equation}
						\label{eq:lambda_ode}
						\frac{d\Psi_i}{dt}=-\frac{\partial H}{\partial x_i}, \qquad \Psi_i(T)=0,\qquad i=1,\dots n.
					\end{equation}
				\end{subequations}
				In order for ${\bf s}^*(t)$ to solve~\eqref{eq:prob}, it is necessary that $\{{\bf x}^*(t),{\bf s}^*(t),\boldsymbol{\Psi}^*(t)\}$ be a solution of the boundary-value problem~\eqref{eqs:max_prob} that also satisfies the maximum condition $H\big(t;{\bf x}^*(t),{\bf s}^*(t),\boldsymbol{\Psi}^*(t)\big) = \max_{{\bf s}(t)\in {\mathcal S}^m}H\big(t;{\bf x}^*(t),{\bf s}(t),\boldsymbol{\Psi}^*(t)\big)$. In particular, if ${\bf s}^*(t)$ is an interior point of ${\mathcal S}^m$, then
				\begin{equation}
					\label{eq:s_condition}
					\frac{\partial H}{\partial s_j}\big(t,{\bf x}^*(t),{\bf s}^*(t),\boldsymbol{\Psi}^*(t)\big)=0, \qquad j=1,\dots,m.
				\end{equation}
			\end{theorem}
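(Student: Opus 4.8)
The plan is to establish this by the classical Lagrange-multiplier argument of the calculus of variations, in which the costate $\boldsymbol\Psi(t)$ enters as a time-dependent multiplier that adjoins the dynamical constraint~\eqref{eq:f_ode} to the objective. The starting observation is that along \emph{any} admissible trajectory the quantity $\int_0^T \sum_{k=1}^n \Psi_k(t)\big(g_k(t,\mathbf{x},\mathbf{s})-\tfrac{dx_k}{dt}\big)\,dt$ vanishes, for every choice of $\boldsymbol\Psi$. Hence one may replace $\Pi$ by the augmented functional
\[
\tilde\Pi[\mathbf{s}] = \int_0^T \Big( H(t,\mathbf{x},\mathbf{s},\boldsymbol\Psi) - \sum_{k=1}^n \Psi_k \tfrac{dx_k}{dt} \Big)\,dt,
\]
with $H$ the Hamiltonian defined in the statement, without altering its value at any admissible control. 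The freedom in the choice of $\boldsymbol\Psi$ is exploited later to eliminate an inconvenient term.

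First I would perturb the optimal control, writing $\mathbf{s}(t) = \mathbf{s}^*(t) + \varepsilon\,\boldsymbol\eta(t)$ for an arbitrary piecewise-continuous $\boldsymbol\eta$ and small $\varepsilon$. Standard dependence-on-parameters results for ODEs give $\mathbf{x}(t) = \mathbf{x}^*(t) + \varepsilon\,\boldsymbol\xi(t) + o(\varepsilon)$, where the variation $\boldsymbol\xi$ solves the linearized state equation with $\boldsymbol\xi(0)=\mathbf{0}$, since the initial datum $\mathbf{x}_0$ is fixed. Differentiating $\tilde\Pi$ at $\varepsilon=0$ then produces the first variation
\[
\delta\tilde\Pi = \int_0^T \Big( \sum_{k=1}^n \tfrac{\partial H}{\partial x_k}\,\xi_k + \sum_{j=1}^m \tfrac{\partial H}{\partial s_j}\,\eta_j - \sum_{k=1}^n \Psi_k\,\tfrac{d\xi_k}{dt} \Big)\,dt .
\]

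Next I would integrate the final term by parts, using $\int_0^T \Psi_k\,\tfrac{d\xi_k}{dt}\,dt = \big[\Psi_k\xi_k\big]_0^T - \int_0^T \tfrac{d\Psi_k}{dt}\,\xi_k\,dt$; the boundary contribution vanishes because $\xi_k(0)=0$ and because we now \emph{impose} the terminal condition $\Psi_k(T)=0$. Collecting terms gives $\delta\tilde\Pi = \int_0^T \big[\sum_k \big(\tfrac{\partial H}{\partial x_k} + \tfrac{d\Psi_k}{dt}\big)\xi_k + \sum_j \tfrac{\partial H}{\partial s_j}\eta_j\big]\,dt$. Choosing $\boldsymbol\Psi$ to satisfy the adjoint equation $\tfrac{d\Psi_k}{dt} = -\tfrac{\partial H}{\partial x_k}$ cancels the $\boldsymbol\xi$ contribution exactly, leaving $\delta\tilde\Pi = \int_0^T \sum_j \tfrac{\partial H}{\partial s_j}\,\eta_j\,dt$. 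Since $\mathbf{s}^*$ is optimal and interior, $\delta\tilde\Pi=0$ for every admissible $\boldsymbol\eta$, and the fundamental lemma of the calculus of variations forces $\tfrac{\partial H}{\partial s_j} = 0$ for each $j$ and almost every $t$, which is precisely~\eqref{eq:s_condition}.

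The main obstacle is conceptual rather than computational: the state perturbation $\boldsymbol\xi$ is not a free variation, since it is slaved to $\boldsymbol\eta$ through the linearized state equation, so one cannot directly extract any condition from the $\xi_k$ terms. The entire purpose of introducing the costate $\boldsymbol\Psi$, with its specific adjoint dynamics and the terminal condition $\Psi_k(T)=0$ (matched to the free terminal state and the fixed initial state), is to decouple these unknown state variations from the expression, so that only the genuinely free control variation $\boldsymbol\eta$ survives. A secondary technical point is justifying the expansion $\mathbf{x} = \mathbf{x}^* + \varepsilon\boldsymbol\xi + o(\varepsilon)$ uniformly on $[0,T]$, which relies on the smoothness of $\mathbf{g}$ and $\pi$ in their arguments together with a Gronwall estimate controlling the remainder.
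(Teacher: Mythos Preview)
Your proof sketch is correct: it is the standard variational derivation of the first-order necessary conditions for an optimal control problem with free terminal state, and each step (augmenting the functional by the adjoined constraint, expanding to first order in the control perturbation, integrating by parts, choosing the costate to kill the state-variation terms, and then invoking the fundamental lemma) is carried out properly. Your remarks about why the terminal condition $\Psi_k(T)=0$ is needed and why $\boldsymbol\xi(0)=0$ are exactly the right justifications for the boundary terms vanishing.

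However, there is nothing to compare against: the paper does not prove this theorem. It is quoted verbatim as a classical result from optimal control theory, with a citation to Kamien and Schwartz~\cite{kamienschwartz}, and is used as a black box in the proofs of Theorems~\ref{thm:gen}, \ref{lem:opt_complete_pq}, \ref{lem:opt_circle_pq}, etc. So your proposal goes well beyond what the paper does here --- you have supplied an actual derivation where the authors simply invoke the literature. If anything, one could note that the version stated in the paper is really the weak (stationarity) form of the maximum principle, $\partial H/\partial s_j=0$, rather than the full pointwise maximization condition; your variational argument is precisely suited to that weak form, and implicitly uses that the optimal control is interior (so that stationarity, rather than a boundary extremum, characterizes the maximum of $H$ in $\mathbf{s}$).
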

			
			Let us also recall the master equations for the Bass model:
			\begin{theorem}[\cite{fibich2021diffusion}]
				Consider the Bass model~\eqref{eqs:Bass-model}, where $\lambda_j(t)$ is given by~\eqref{eq:pjqj}. Let $\emptyset\neq \Omega \subset \mathcal{M}$, $\Omega^c:=M\setminus\Omega$, and let $p_\Omega$, $q_{k\to\Omega}$, and $[S_{\Omega,k}]$ be given by~\eqref{eq:notations}. 
				Then the master equation for $[S_{\Omega}]$ is
				\begin{equation}
					\label{eq:master_general}
					\frac{d[S_{\Omega}]}{dt}=-\Big(p_\Omega(s_p(t)) +\sum_{k\in\Omega^c}q_{k\to\Omega}(s_q(t))\Big)[S_{\Omega}]+\sum_{k\in\Omega^c}q_{k\to\Omega}(s_q(t))[S_{\Omega,k}],
					%\\
					\quad [S_{\Omega}](0)=1.
				\end{equation}
			\end{theorem}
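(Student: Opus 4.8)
The plan is to derive the master equation~\eqref{eq:master_general} directly from the infinitesimal transition rules in~\eqref{eqs:Bass-model} by a first-order expansion in $\Delta t$ followed by the limit $\Delta t\to 0$. The initial condition $[S_\Omega](0)=1$ is immediate, since $X_j(0)=0$ for all $j$ forces $\mathbb{P}(X_m(0)=0,\ m\in\Omega)=1$. The main idea is a conditioning/expectation argument that turns the stochastic adoption rule into a closed ODE coupling $[S_\Omega]$ to the larger sets $[S_{\Omega,k}]$.

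First I would exploit the monotonicity of the dynamics: because an adopter never reverts, the event $S_\Omega(t+\Delta t)$ is contained in $S_\Omega(t)$, so all nodes of $\Omega$ must already be non-adopters at time $t$ and then none of them may adopt on $[t,t+\Delta t]$. Conditioning on the full configuration $\{X_1(t),\dots,X_M(t)\}$ and using the Markov property together with the conditional independence of distinct nodes' adoptions to leading order, I would write
\begin{equation*}
\mathbb{P}\big(S_\Omega(t+\Delta t)\mid \text{state at } t\big)=\mathbbm{1}_{S_\Omega(t)}\prod_{m\in\Omega}\big(1-\lambda_m(t)\Delta t\big)+O(\Delta t^2)=\mathbbm{1}_{S_\Omega(t)}\Big(1-\Delta t\sum_{m\in\Omega}\lambda_m(t)\Big)+O(\Delta t^2).
\end{equation*}
Taking expectations and using~\eqref{eq:pjqj} gives $[S_\Omega](t+\Delta t)=[S_\Omega](t)-\Delta t\,\E\big[\mathbbm{1}_{S_\Omega(t)}\sum_{m\in\Omega}\lambda_m(t)\big]+O(\Delta t^2)$.

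The heart of the argument is to evaluate $\E\big[\mathbbm{1}_{S_\Omega(t)}\sum_{m\in\Omega}\lambda_m(t)\big]$ and show it closes onto $[S_\Omega]$ and $[S_{\Omega,k}]$. Splitting $\sum_{m\in\Omega}\lambda_m=p_\Omega(s_p)+\sum_{m\in\Omega}\sum_{k}q_{k\to m}(s_q)X_k$, I would observe that on the event $S_\Omega(t)$ one has $X_k(t)=0$ for every $k\in\Omega$, so only indices $k\in\Omega^c$ survive, and by the definition $q_{k\to\Omega}=\sum_{m\in\Omega}q_{k\to m}$ the double sum collapses to $\sum_{k\in\Omega^c}q_{k\to\Omega}(s_q)X_k(t)$. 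The key closure identity is then
\begin{equation*}
\E\big[\mathbbm{1}_{S_\Omega(t)}X_k(t)\big]=\E\big[\mathbbm{1}_{S_\Omega(t)}\big]-\E\big[\mathbbm{1}_{S_\Omega(t)}\mathbbm{1}_{X_k(t)=0}\big]=[S_\Omega]-[S_{\Omega,k}],
\end{equation*}
which uses $X_k=1-\mathbbm{1}_{X_k=0}$ and the fact that $S_\Omega(t)\cap\{X_k(t)=0\}=S_{\Omega\cup\{k\}}(t)$. Substituting yields $\E\big[\mathbbm{1}_{S_\Omega}\sum_{m\in\Omega}\lambda_m\big]=p_\Omega(s_p)[S_\Omega]+\sum_{k\in\Omega^c}q_{k\to\Omega}(s_q)\big([S_\Omega]-[S_{\Omega,k}]\big)$; dividing the difference quotient by $\Delta t$ and letting $\Delta t\to 0$ produces exactly~\eqref{eq:master_general}.

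The step I expect to require the most care is the leading-order factorization of the conditional survival probability over $m\in\Omega$: one must justify that the probability of two or more nodes in $\Omega$ adopting on $[t,t+\Delta t]$, together with the correction to independence of single adoptions, is $O(\Delta t^2)$ and hence does not contribute in the limit. This is where the precise form of the infinitesimal transition law in~\eqref{eqs:Bass-model} and the boundedness of the rates $\lambda_m(t)$ (which follows from $s_p,s_q$ being piecewise continuous, hence bounded, on $[0,T]$) enter. Once that bookkeeping is in place, the remaining computation is the algebraic closure described above, which is routine.
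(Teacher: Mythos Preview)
Your derivation is correct and is essentially the standard argument for obtaining the master equations of this Markov process: condition on the full state at time $t$, expand the conditional non-adoption probability of the nodes in $\Omega$ to first order in $\Delta t$, take expectations, and close the resulting expression via the identity $\E[\mathbbm{1}_{S_\Omega}X_k]=[S_\Omega]-[S_{\Omega,k}]$. The step you flag as requiring care (that multiple adoptions in $[t,t+\Delta t]$ contribute only $O(\Delta t^2)$) is indeed the only point needing justification, and it follows, as you note, from the boundedness of the rates.

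One remark on the comparison: the paper does not actually prove this theorem. It is stated as a citation of~\cite{fibich2021diffusion} and used as a black box in the proof of Theorem~\ref{thm:gen}. So there is no ``paper's own proof'' to compare against; your argument supplies exactly the kind of derivation that the cited reference contains, and nothing is missing.
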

			
			\begin{proof}[Proof of Theorem~{\rm \ref{thm:gen}}]
				Let us apply Theorem~\ref{thm:pontryagin} with $\pi({\bf s}(t))$ given by~\eqref{eq:profit_general}, ${\bf x}:=\{[S_{\Omega}]\}_{\emptyset\neq\Omega\subset\mathcal{M}}$, and ${\bf g}:=\{g_{\Omega}\}_{\emptyset\neq\Omega\subset\mathcal{M}}$, where $g_\Omega$ right-hand side of~\eqref{eq:master_general}, and ${\bf s}(t):=(s_p(t),s_q(t))$. Then, using~\eqref{eq:exp_adoption},
				\begin{equation*}
					H=\Big(-\frac{\gamma}{M} \sum_{j=1}^{M}\frac{d[S_j]}{dt}-\left(s_p+s_q\Big) \right)e^{-\theta t}+\sum_{\emptyset\neq \Omega \subset \mathcal{M}}\Psi_\Omega\frac{d[S_\Omega]}{dt}.
				\end{equation*}
				Substituting the expressions for $\frac{d[S_j]}{dt}$ and $\frac{d[S_\Omega]}{dt}$ from~\eqref{eq:master_general}, we have
				\begin{align*}
					H&=\bigg(-\frac{\gamma}{M} \sum_{j=1}^{M}\Big(-\Big(p_j(s_p) +\sum_{k\neq j}q_{k\to j}(s_q)\Big)[S_j]+\sum_{k\neq j}q_{k\to j}(s_q)[S_{j,k}]\Big)-(s_p+s_q) \bigg)e^{-\theta t}\\
					&\qquad\qquad+\sum_{\emptyset\neq \Omega \subset \mathcal{M}}\Psi_\Omega\bigg(-\Big(p_\Omega(s_p) +\sum_{k\in\Omega^c}q_{k\to\Omega}(s_q)\Big)[S_{\Omega}]+\sum_{k\in\Omega^c}q_{k\to\Omega}(s_q)[S_{\Omega,k}]\bigg).
				\end{align*}
				The differential equation~\eqref{eq:lambda_prime_general} for $\Psi_\Omega$ follows from
				$
				\frac{d\Psi_\Omega}{dt}=-\frac{\partial H}{\partial [S_\Omega]}.
				$
			\end{proof}
			
			\section{Proof of Theorem~\ref{lem:opt_M}}
			\label{app:lem:opt_M}
			Subsituting~\eqref{eq:comp_reduction} in~\eqref{eq:s_prime_general}~and~\eqref{eq:c} gives~\eqref{eq:S_M}~and~\eqref{eq:H_comp}. The differential equation~\eqref{eq:lambda_M} for~$\Psi_n$ follows from
			$
			\frac{d\Psi_n}{dt}=-\frac{\partial H}{\partial [S^n]},
			$
			see~\eqref{eq:lambda_ode}.

			\section{Proof of Corollary~\ref{cor:opt_complete_M}}
			\label{app:cor:opt_complete_M}
			Substituting~\eqref{eqs:pq_opt_M} and~\eqref{eq:p_q_s} in~\eqref{eq:H_comp}, the equations $\frac{\partial H}{\partial s_p}=0$ and $\frac{\partial H}{\partial s_q}=0$ yield
			\begin{equation*}
				0=\frac{\gamma e^{-\theta t} b_p [S]}{2\sqrt{s_p}}-e^{-\theta t}-\sum_{n=1}^{M}\Psi_n\frac{b_p [S^n]}{2\sqrt{s_p}}, \qquad
				0=\frac{\gamma e^{-\theta t} b_q \left([S]-[S^2]\right)}{2\sqrt{s_q}}-e^{-\theta t}-\sum_{n=1}^{M}\Psi_n\frac{b_q \left([S^n]-[S^{n+1}]\right)}{2\sqrt{s_q}},
			\end{equation*}
			respectively, which lead to~\eqref{eq:s_lambda_pq_M}.

			\section{Proof of Theorem~\ref{lem:opt_complete_pq}}
			\label{app:lem:opt_complete_pq}
			By Theorem~\ref{thm:pontryagin}, any solution of~\eqref{eq:profit} must satisfy $\frac{\partial H}{\partial s_p}=\frac{\partial H}{\partial s_q}=0$, where
			\begin{subequations}
				\label{eqs:ham_pq}
				\begin{equation}
					\label{eq:ham_pq}
					H(t,f(t),s_p(t),s_q(t))=\Big(\gamma \frac{df}{dt}-s_p-s_q\Big)e^{-\theta t}+\Psi\frac{df}{dt},
				\end{equation}
				and
				\begin{equation}
					\label{eq:lambda_pq}
					\frac{d\Psi}{dt}=-\frac{\partial H}{\partial f}, \qquad \Psi(T)=0.
				\end{equation}
			\end{subequations}
			Substituting~\eqref{eq:complete_limit} in~\eqref{eqs:ham_pq} yields
			\begin{subequations}
				\label{eqs:H_lambda_pq}
				\begin{equation*}
					H=\Big(\gamma e^{-\theta t}+\Psi \Big)(1-f)\big(p(s_p)+q(s_q)f\big)-(s_p+s_q)e^{-\theta t},
				\end{equation*}
				and
				\begin{equation*}
					%\label{eq:lambda_prime_pq}
					\frac{d\Psi}{dt}
					=\Big(\gamma e^{-\theta t}+\Psi\Big)\big(p(s_p)+q(s_q)(2f-1)\big), \qquad \Psi(T)=0.
				\end{equation*}
			\end{subequations}
			The condition $\frac{\partial H}{\partial s_p}=\frac{\partial H}{\partial s_q}=0$ follows from~\eqref{eq:s_condition}.
			
			\section{Proof of Corollary~\ref{cor:opt_complete_pq}}
			\label{app:cor:opt_complete_pq}
			Substituting~\eqref{eq:p_q_s} and~\eqref{eq:bvp_pq} in~\eqref{eq:H_inf}, the equation $\frac{\partial H}{\partial s_p}=\frac{\partial H}{\partial s_q}=0$ yields
			\begin{equation*}
				%\label{eq:neccesary_pq}
				0=\frac{b_p(1-f)(\gamma e^{-\theta t}+\Psi)}{2\sqrt{s_p(t)}}-e^{-\theta t}, \qquad
				0=\frac{b_q(1-f)f(\gamma e^{-\theta t}+\Psi)}{2\sqrt{s_q(t)}}-e^{-\theta t},
			\end{equation*}
			which leads to~\eqref{eq:s_lambda_pq}.
			
			\section{Proof of Corollary~\ref{cor:sp_prime_zero}}
			\label{app:cor_sp_prime_zero}
			By~\eqref{eq:s_lambda_pq},
			\begin{equation}
				\label{eq:lambda_s_p}
				\Psi(t)=\Big(\frac{2\sqrt{s_p}}{b_p(1-f)}-\gamma\Big)e^{-\theta t}.
			\end{equation}
			Differentiating this equation and equating it to~$\frac{d\Psi}{dt}$ from~\eqref{eq:bvp_pq} yields
			\begin{align*}
				&\bigg(\frac{\frac{s_p'}{\sqrt{s_p}}b_p(1-f)+2\sqrt{s_p}b_p(1-f)\left(p_0+b_p\sqrt{s_p}+(q_0+b_q\sqrt{s_q})f\right)}{b_p^2(1-f)^2}
				-\theta\Big(\frac{2\sqrt{s_p}}{b_p(1-f)}-\gamma\Big)\bigg)e^{-\theta t}\\&\quad=\frac{2\sqrt{s_p}e^{-\theta t}}{b_p(1-f)}\Big(p_0+b_p\sqrt{s_p}+(q_0+b_q\sqrt{s_q})(2f-1)\Big).
			\end{align*}
			This equation simplifies to
			\begin{equation}
				\label{eq:s_p_prime}
				s_p'+2s_p\big((q_0+b_q\sqrt{s_q})(1-f)-\theta\big)+\sqrt{s_p}\theta\gamma b_p(1-f)=0.
			\end{equation}
			Assume by contradiction that $s_p(0)=0$. Then either there exists an interval $I=[0,t^*]$ for which $s_p(t)\equiv 0$, or $s_p(t)>0$ for $t>0$. By~\eqref{eq:lambda_s_p}, if $s_p(t)\equiv 0$, then $\Psi=-\gamma e^{-\theta t}$. Substituting this into~\eqref{eq:bvp_pq} gives $\frac{d\Psi}{dt}\equiv 0$, which contradicts with $\Psi=-\gamma e^{-\theta t}$. If $s_p(0)=0$ and $s_p(t)>0$ for $0<t\ll 1$, then $\left(q_0+b_q\sqrt{s_q(t)}\right)(1-f)-\theta>0$, and so~\eqref{eq:s_p_prime} implies that $s_p'(t)<0$, which is a contradiction. Therefore, we conclude that $s_p(0)>0$. By~\eqref{eq:s_p_prime} and Corollary~\ref{cor:s(0)},
			\begin{equation}
				\label{eq:s_p_prime_0}
				s_p'(0)+2s_p(0)\left(q_0-\theta\right)+\sqrt{s_p(0)}\theta\gamma b_p=0.
			\end{equation}
			Hence, $s_p'(0)<0$.

			\section{Proof of Corollary~\ref{cor:s_infty}}
			\label{app:cor:s_infty}
			We begin with an auxilliary lemma:
			
			%\begin{lemma}
			%	\label{lem:lambda}
			%	The solution of~\eqref{eq:bvp_pq} satisfies
			%	\begin{equation*}
				%		-\gamma e^{-\theta t}\leq \Psi(t)\leq 0, \qquad \Psi'(t)\geq 0, \qquad t^*\leq t<T,
				%	\end{equation*}
			%	where $t^*={\rm argmin}\{f^{\rm opt}(t)\geq \frac{1}{2}\}$.
			%\end{lemma}
			%\begin{proof}
			%	Since $s_p(t),s_q(t)\geq 0$, then
			%	\begin{equation*}
				%		p_0+b_p\sqrt{s_p(t,f,\Psi)}+\left(q_0+b_q\sqrt{s_q(t,f,\Psi)}\right)(2f-1)\geq p_0>0, \qquad t^*\leq t\leq T.
				%	\end{equation*}
			%	Therefore, by~\eqref{eq:bvp_pq}, the sign of $\Psi'(t)$ is determined by the sign of $A(t):=\gamma e^{-\theta t}+\Psi$. Let $t^*\leq t_1<T$.
			%	\begin{itemize}
				%		\item[{\bf Case 1:}]
				%		If $\Psi(t_1)>0$, then in order to have $\Psi(T)=0$, there must exist $t_1<t_2<T$ such that $\Psi(t_2)>0$ and $\Psi'(t_2)<0$. However, $\Psi(t_2)>0$ implies that $A(t_2)>0$ and hence $\Psi'(t_2)>0$. Contradiction.
				%		\item[{\bf Case 2:}]
				%		If $\Psi(t_1)<-\gamma e^{-\theta t_1}$, then $A(t_1)<0$. Since $A(T)=\gamma e^{-\theta T}>0$, there exists $t_2\in (t_1,T)$ such that $A(t_2)=0$ and $A'(t_2)>0$. However,
				%		$$
				%		\frac{dA}{dt}=-\gamma\theta e^{-\theta t}+\Psi'(t).
				%		$$
				%		Since $A(t_2)=0$, then $\Psi'(t_2)=0$. Hence, $A'(t_2)=-\gamma\theta e^{-\theta t_2}<0$. Contradiction.
				%	\end{itemize}
			%	Therefore, the result holds
			%\end{proof}
			\begin{lemma}
				\label{lem:lambda}
				Let $\tilde{t}=\left(f^{\rm opt}\right)^{-1}(\frac{1}{2})$. Then the solution of~\eqref{eq:bvp_pq} satisfies $-\gamma e^{-\theta t}\leq \Psi(t)\leq 0$ for  $\tilde{t}\leq t<T$.
			\end{lemma}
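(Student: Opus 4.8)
The plan is to read the $\Psi$-equation in \eqref{eq:bvp_pq} as a scalar linear first-order ODE with terminal data $\Psi(T)=0$, and to extract both bounds by a backward integrating-factor argument once the sign of its coefficient is understood on $[\tilde t,T)$.

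First I would record the monotonicity of $f$. Since $p(s_p)\ge p_0>0$ and $0\le f<1$, the equation $f'=(1-f)\big(p(s_p)+q(s_q)f\big)$ gives $f'>0$, so $f^{\rm opt}$ is strictly increasing on $[0,T)$. Hence $f(t)\ge f(\tilde t)=\tfrac12$ for $t\in[\tilde t,T)$, and therefore
$$
a(t):=p(s_p(t))+q(s_q(t))\big(2f(t)-1\big)\ge 0, \qquad t\in[\tilde t,T).
$$
With this notation the $\Psi$-equation becomes $\Psi'-a(t)\Psi=\gamma e^{-\theta t}a(t)$.

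For the upper bound, multiply by the integrating factor $e^{-\int_{\tilde t}^{t}a}$, so that $\tfrac{d}{dt}\!\big(e^{-\int_{\tilde t}^{t}a}\Psi\big)=e^{-\int_{\tilde t}^{t}a}\,\gamma e^{-\theta t}a(t)$, and integrate from $t$ to $T$. Using $\Psi(T)=0$ this yields
$$
-\,e^{-\int_{\tilde t}^{t}a}\,\Psi(t)=\int_{t}^{T}e^{-\int_{\tilde t}^{\tau}a}\,\gamma e^{-\theta\tau}a(\tau)\,d\tau\ \ge\ 0,
$$
because $a\ge0$ and $\gamma>0$ on $[\tilde t,T]$; hence $\Psi(t)\le 0$. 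For the lower bound I would substitute $\phi(t):=\gamma e^{-\theta t}+\Psi(t)$, so that $\Psi(t)\ge-\gamma e^{-\theta t}$ is equivalent to $\phi(t)\ge0$. A one-line computation converts the $\Psi$-equation into $\phi'-a(t)\phi=-\theta\gamma e^{-\theta t}$ with $\phi(T)=\gamma e^{-\theta T}>0$, and the same manipulation gives
$$
e^{-\int_{\tilde t}^{t}a}\,\phi(t)=e^{-\int_{\tilde t}^{T}a}\,\gamma e^{-\theta T}+\int_{t}^{T}e^{-\int_{\tilde t}^{\tau}a}\,\theta\gamma e^{-\theta\tau}\,d\tau\ >\ 0,
$$
so $\phi(t)>0$ and therefore $\Psi(t)>-\gamma e^{-\theta t}$ on $[\tilde t,T)$.

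The only genuinely substantive step is identifying the interval on which the coefficient $a(t)$ has a definite sign: the whole argument rests on $2f-1\ge0$, which is exactly why the claim is restricted to $t\ge\tilde t$. Everything past that is a routine backward integration of a one-dimensional linear ODE, so I do not expect any real obstacle; one should only note in passing that $\tilde t$ is well defined whenever it is invoked (automatic when $T=\infty$, since then $f\to1$).
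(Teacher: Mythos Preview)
Your argument is correct. The only subtlety worth flagging is the case $T=\infty$, where $\Psi(T)=0$ has to be read as a limit; but the paper's own proof treats this identically, so you are on equal footing there.

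Your route, however, differs from the paper's. The paper argues by contradiction at the pointwise level: having observed that on $[\tilde t,T]$ the coefficient $a(t)$ is strictly positive, it notes that the sign of $\Psi'$ is the sign of $A(t):=\gamma e^{-\theta t}+\Psi$, and then rules out $\Psi(t_1)>0$ and $\Psi(t_1)<-\gamma e^{-\theta t_1}$ separately by locating a time $t_2\in(t_1,T)$ at which the required sign of $\Psi'$ (or of $A'$) clashes with what the equation forces. Your approach instead writes the linear ODE in integrating-factor form and produces explicit representation formulas for $\Psi$ and for $\phi=\gamma e^{-\theta t}+\Psi$, from which the signs are read off directly. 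Both proofs hinge on the same observation---$a(t)\ge0$ once $f\ge\tfrac12$---but yours is the more direct and quantitative of the two: it gives closed-form expressions that would let one extract decay rates or sharper bounds if needed, and it avoids the case analysis entirely. The paper's sign-chasing argument, on the other hand, is the sort that ports more readily to settings (systems, nonlinear costates) where no scalar integrating factor is available.
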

			\begin{proof}
				Since $s_p(t),s_q(t)\geq 0$, then
				\begin{equation*}
					p_0+b_p\sqrt{s_p(t)}+\Big(q_0+b_q\sqrt{s_q(t)}\Big)(2f-1)\geq p_0>0, \qquad \tilde{t}\leq t\leq T.
				\end{equation*}
				Therefore, by~\eqref{eq:bvp_pq}, the sign of $\frac{d\Psi}{dt}$ is determined by the sign of $A(t):=\gamma e^{-\theta t}+\Psi$. Let $\tilde{t} \leq t_1<T$.
				\begin{itemize}
					\item[{\bf Case 1:}]
					If $\Psi(t_1)>0$, then in order to have $\Psi(T)=0$, see~\eqref{eq:bvp_pq}, there must exist $t_2\in (t_1,T)$ such that $\Psi(t_2)>0$ and $\frac{d\Psi}{dt}(t_2)<0$. However, $\Psi(t_2)>0$ implies that $A(t_2)>0$, and hence that $\frac{d\Psi_1}{dt}(t_2)>0$. Contradiction.
					\item[{\bf Case 2:}]
					If $\Psi(t_1)<-\gamma e^{-\theta t_1}$, then $A(t_1)<0$. Since $A(T)=\gamma e^{-\theta T}>0$, there must exist $t_2\in (t_1,T)$ such that $A(t_2)=0$ and $A'(t_2)>0$. However,
					$
					A'(t)=-\gamma\theta e^{-\theta t}+\frac{d\Psi_1}{dt}.
					$
					Since $A(t_2)=0$, then $\frac{d\Psi}{dt}(t_2)=0$, see~\eqref{eq:bvp_pq}. Hence, $A'(t_2)=-\gamma\theta e^{-\theta t_2}<0$. Contradiction.
				\end{itemize}
				Therefore, the result holds
			\end{proof}
			
			\begin{proof}[Proof of Corollary~{\rm \ref{cor:s_infty}}:]
				Let $T<\infty$. Since $\Psi(t)$ and $f^{\rm opt}(t)$ are continuous, $s_p^{\rm opt}(t)$ and $s_q^{\rm opt}(t)$ are bounded in $[0,T]$, see~\eqref{eq:s_lambda_pq}. Therefore, by~\eqref{eq:bvp_pq},
				$
				\frac{df^{\rm opt}}{dt}\leq (1-f^{\rm opt})K,
				$
				where $K$ is a positive constant. Hence, $f^{\rm opt}(T)<1$, since $f$ cannot converge to the fixed point $f=1$ in finite time. Therefore, by~\eqref{eq:s_lambda_pq}, $s_p^{\rm opt}(T), s_q^{\rm opt}(T)>0$.
				
				Let $T=\infty$. Since $f$ is bounded from below by the case in which $q=b_p=b_q=0$, then $f(t)>1-e^{-pt}$. Hence, $\lim\limits_{t\to\infty}f^{\rm opt}(t)=1$. In addition, by~\eqref{eq:s_lambda_pq},
				$
				\label{eq:s_bounds}
				s_p(t)<\frac{b_p^2}{4}\left(e^{-pt}(\Psi e^{\theta t}+\gamma)\right)^2.
				$
				By Lemma~\ref{lem:lambda}, $0\leq \Psi e^{\theta t}+\gamma=\gamma$. Hence, $\lim\limits_{t\to\infty}s_p^{\rm opt}(t)=0$, and so by~\eqref{eq:s_lambda_pq}, $\lim\limits_{t\to\infty}s_q^{\rm opt}(t)=0$.
			\end{proof}

			\section{Proof of Theorem~\ref{lem:opt_circle_pq}}
			\label{app:lem:opt_circle_pq}
			
			Let $y:=\int_{0}^{t}p(s_p(\tau))\, d\tau$ be a new state variable. Then
			$\frac{dy}{dt}=p(s_p(t))$ and $y(0)=0$.
			Maximizing~\eqref{eq:profit} on the infinite line~\eqref{eq:circle_conditions} can be formulated as an optimal control problem with the two state variables ${\bf f}(t)=(f_1(t),f_2(t))=(f,y)$ and the two controls $s_p(t)$ and $s_q(t)$. Furthermore, by~\eqref{eq:circle_limit},
			\begin{align*}
				\frac{df}{dt}&=g_1:=\Big(p(s_p(t))+q(s_q(t))(1-e^{-y})\Big)(1-f),  &f(0)=0,\qquad\qquad\qquad\\
				\frac{dy}{dt}&=g_2:=p(s_p(t)),  &y(0)=0.\qquad\qquad\qquad
			\end{align*}
			Finally, $\pi=\big(\gamma\frac{df}{dt}-s_p-s_q\big)e^{-\theta t}$. Applying Theorem~\ref{thm:pontryagin} gives
			\begin{equation}
				\label{eq:ham_circle}
				H=\pi+\Psi_1g_1+\Psi_2g_2=(\gamma e^{-\theta t} +\Psi_1) (1-f) \left(p(s_p)+q(s_q)(1-e^{-y})\right)+p(s_p)\Psi_2-\left(s_p+s_q\right)e^{-\theta t},
			\end{equation}
			which gives~\eqref{eq:H_line}. Equations~\eqref{eq:bvp_pq_circle} for $\frac{d\Psi_1}{dt}$ and $\frac{d\Psi_2}{dt}$ follow from
			$
			\frac{d\Psi_1}{dt}=-\frac{\partial H}{\partial f}$ and $ \frac{d\Psi_2}{dt}=-\frac{\partial H}{\partial y}.
			$
			
			\section{Proof of Corollary~\ref{cor:opt_circle_pq}}
			Substituting~\eqref{eq:p_q_s}, the Hamiltonian~\eqref{eq:ham_circle} becomes
			\begin{align*}
				H=(\gamma e^{-\theta t} +\Psi_1) (1-f) \left(p_0+b_p\sqrt{s_p}+(q_0+b_q\sqrt{s_q})(1-e^{-y})\right)+\left(p_0+b_p\sqrt{s_p}\right)\Psi_2-\left(s_p+s_q\right)e^{-\theta t}.
			\end{align*}
			The necessary condition for optimality $\frac{\partial H}{\partial s_p}=\frac{\partial H}{\partial s_q}=0$ reads
			$$
			\frac{(\gamma e^{-\theta t}+\Psi_1)(1-f)b_p+\Psi_2b_p}{2\sqrt{s_p}}-e^{-\theta t}=\frac{(\gamma e^{-\theta t}+\Psi_1)(1-f)b_q(1-e^{-y})}{2\sqrt{s_q}}-e^{-\theta t}=0,
			$$
			from which the result follows.

			\section{Proof of Corollary~\ref{cor:s_infty_circle}}
			\label{app:cor:s_infty_circle}
			We begin with an auxilliary lemma:
			\begin{lemma}
				\label{lem:lambda_1d}
				The solution of~\eqref{eq:bvp_pq_circle} satisfies for $0\leq t\leq T$
				\begin{equation}
					\label{eq:psi_2_bound}
					-\gamma e^{-\theta t}\leq \Psi_1(t)\leq 0, \qquad
					\max \Bigl\{0, \Big(\frac{2}{b_p}\sqrt{s_p(t)}-(1-f)\gamma\Big)e^{-\theta t}\Bigr\}\leq \Psi_2(t)\leq \frac{2}{b_p}\sqrt{s_p(t)}e^{-\theta t}.
				\end{equation}
			\end{lemma}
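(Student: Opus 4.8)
The plan is to prove the three ingredients — the two‑sided bound on $\Psi_1$, the nonnegativity of $\Psi_2$, and the pinching of $\Psi_2$ between the two explicit functions of $s_p$ — in this order, and then assemble them. First I would establish $-\gamma e^{-\theta t}\le\Psi_1(t)\le 0$ by the same argument as in the proof of Lemma~\ref{lem:lambda}. The key point is that the coefficient of $\big(\gamma e^{-\theta t}+\Psi_1\big)$ in the $\Psi_1$-equation of~\eqref{eq:bvp_pq_circle},
\begin{equation*}
	p(s_p(t))+q(s_q(t))\big(1-e^{-y(t)}\big)\ge p_0>0,\qquad 0\le t\le T,
\end{equation*}
is strictly positive on all of $[0,T]$, because $s_p,s_q\ge 0$ and $y(t)=\int_0^t p(s_p)\ge 0$; this is cleaner than the complete-network case, where the factor $2f-1$ forced the restriction $t\ge\tilde t$ in Lemma~\ref{lem:lambda}. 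Consequently the sign of $d\Psi_1/dt$ equals that of $\gamma e^{-\theta t}+\Psi_1$, and the two contradiction arguments of Lemma~\ref{lem:lambda} — one ruling out $\Psi_1>0$, the other ruling out $\Psi_1<-\gamma e^{-\theta t}$, both exploiting $\Psi_1(T)=0$ — carry over verbatim. (Equivalently, one may solve this linear scalar ODE backward from $\Psi_1(T)=0$ with an integrating factor and read off both bounds at once; I would keep whichever write-up is shorter.)

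Next I would show $\Psi_2(t)\ge 0$. From the bound just obtained, $\gamma e^{-\theta t}+\Psi_1(t)\ge 0$; combined with $1-f(t)>0$ (since $f=1-e^{-\int_0^t(p(s_p)+q(s_q)(1-e^{-y}))}\in[0,1)$), with $q(s_q(t))\ge 0$, and with $e^{-y(t)}>0$, the right-hand side of the $\Psi_2$-equation in~\eqref{eq:bvp_pq_circle} is nonpositive. Hence $\Psi_2$ is nonincreasing on $[0,T]$, and $\Psi_2(T)=0$ gives $\Psi_2(t)\ge 0$.

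Finally, for the remaining bounds I would use the first-order optimality condition from the proof of Corollary~\ref{cor:opt_circle_pq}, which rearranges to
\begin{equation*}
	\Psi_2(t)=\tfrac{2}{b_p}\sqrt{s_p(t)}\,e^{-\theta t}-\big(\gamma e^{-\theta t}+\Psi_1(t)\big)\big(1-f(t)\big).
\end{equation*}
Since $\big(\gamma e^{-\theta t}+\Psi_1\big)(1-f)\ge 0$ this yields the upper bound $\Psi_2(t)\le\tfrac{2}{b_p}\sqrt{s_p(t)}\,e^{-\theta t}$; and since $\Psi_1\le 0$ gives $\gamma e^{-\theta t}+\Psi_1\le\gamma e^{-\theta t}$ while $1-f\ge 0$, the same identity gives $\Psi_2(t)\ge\big(\tfrac{2}{b_p}\sqrt{s_p(t)}-(1-f)\gamma\big)e^{-\theta t}$, which together with $\Psi_2\ge 0$ is the stated $\max\{\cdot,\cdot\}$ lower bound. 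The only genuinely delicate step is the $\Psi_1$ bound: one must handle the terminal condition (backward-in-time) carefully and control the sign of the coefficient uniformly in $t$ — and it is exactly the monotone structure $1-e^{-y}\ge 0$ of the infinite-line dynamics that makes that uniform control possible.
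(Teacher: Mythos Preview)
Your proposal is correct and follows essentially the same route as the paper: first the two-sided bound on $\Psi_1$ via the two contradiction cases (exploiting that the coefficient $p(s_p)+q(s_q)(1-e^{-y})\ge p_0>0$ on all of $[0,T]$), then $\Psi_2\ge 0$ from monotonicity and $\Psi_2(T)=0$, and finally the remaining bounds on $\Psi_2$ by inverting the optimality relation~\eqref{eq:s_lambda_pq_circle} and plugging in $-\gamma e^{-\theta t}\le\Psi_1\le 0$. Your remark that the infinite-line factor $1-e^{-y}\ge 0$ removes the need for the restriction $t\ge\tilde t$ of Lemma~\ref{lem:lambda} is exactly the point.
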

			\begin{proof}
				Since $s_p(t),s_q(t)\geq 0$, then $y(t)\geq 0$, and so
				\begin{equation*}
					p_0+b_p\sqrt{s_p(t)}+\Big(q_0+b_q\sqrt{s_q(t)}\Big)(1-e^{-y})\geq p_0>0, \qquad 0\leq t\leq T.
				\end{equation*}
				Therefore, by~\eqref{eq:bvp_pq_circle}, the sign of $\frac{d\Psi_1}{dt}$ is determined by the sign of $A(t):=\gamma e^{-\theta t}+\Psi_1$. Let $0\leq t_1<T$.
				\begin{itemize}
					\item[{\bf Case 1:}]
					If $\Psi_1(t_1)>0$, then in order to have $\Psi_1(T)=0$, see~\eqref{eq:bvp_pq_circle}, there must exist $t_2\in (t_1,T)$ such that $\Psi_1(t_2)>0$ and $\frac{d\Psi_1}{dt}(t_2)<0$. Now, $\Psi_1(t_2)>0$ impliles that $A(t_2)>0$, and hence that $\frac{d\Psi_1}{dt}(t_2)>0$. Contradiction.
					\item[{\bf Case 2:}]
					If $\Psi_1(t_1)<-\gamma e^{-\theta t_1}$, then $A(t_1)<0$. Since $A(T)=\gamma e^{-\theta T}>0$, there must exist $t_2\in (t_1,T)$ such that $A(t_2)=0$ and $A'(t_2)>0$. Now,
					$
					A'(t)=-\gamma\theta e^{-\theta t}+\frac{d\Psi_1}{dt}.
					$
					Since $A(t_2)=0$, then $\frac{d\Psi_1}{dt}(t_2)=0$, see~\eqref{eq:bvp_pq_circle}. Hence, $A'(t_2)=-\gamma\theta e^{-\theta t_2}<0$. Contradiction.
					%	If $\Psi_1(t_1)<-\gamma e^{-\theta t_1}$, then $A(t_1)<0$, and so $\Psi_1'(t_1)<0$. In order for $\Psi_1(T)=0$, there must exist $t_1<t_2<T$ such that $\Psi_1'(t_2)>0$, or equivalently $A(t_2)>0$. In particular, since $A(t)$ is continuous, there must exist $t_3\in (t_1,t_2)$ which is the first time after $t_1$ in which $A(t_3)=0$. However, $A'(t)=-\gamma\theta e^{-\theta t}+\Psi_1'(t)$, so if $\Psi_1'(t)<0$, or equivalently $A(t)<0$, then $A'(t)<0$. Since $A(t)<0$ for $t_1\leq t<t_3$, then $A'(t)<0$ for $t_1\leq t<t_3$. Therefore, $A(t_3)<A(t_1)$, which contradicts $A(t_3)=0$.
				\end{itemize}
				Therefore, we proved that
				$$
				-\gamma e^{-\theta t}\leq\Psi_1(t)\leq 0, \qquad A(t)\geq 0 \qquad 0\leq t<T.
				$$
				Since
				$
				(1-f)\left(q_0+b_q\sqrt{s_q(t)}\right)e^{-y}>0
				$
				and $A(t)\geq 0$, then by~\eqref{eq:bvp_pq_circle}, $\frac{d\Psi_2}{dt}(t)\leq 0$ for $0\leq t<T$. Hence, since $\Psi_2(T)=0$, $\Psi_2(t)\geq 0$ for $0\leq t<T$. Furthermore, by~\eqref{eq:s_lambda_pq_circle},
				\begin{equation}
					\label{eq:lambda_2_circle}
					\Psi_2(t)=\Big(\frac{2}{b_p}\sqrt{s_p(t)}-(1-f)(\gamma+\Psi_1e^{\theta t})\Big)e^{-\theta t}.
				\end{equation}
				The upper and lower bounds for $\Psi_2$ are attained by substituting  $-\gamma e^{-\theta t}\leq\Psi_1 \leq 0$ in~\eqref{eq:lambda_2_circle}.
			\end{proof}
			
			\begin{proof}[Proof of Corollary~\ref{cor:s_infty_circle}]
				Let $T<\infty$. Since $\Psi_1(t), \Psi_2(t)$ and $f^{\rm opt}(t)$ are continuous, $s_p^{\rm opt}(t)$ and $s_q^{\rm opt}(t)$ are bounded in $[0,T]$, see~\eqref{eq:s_lambda_pq_circle}. Therefore, by~\eqref{eq:bvp_pq_circle},
				$
				\frac{df^{\rm opt}}{dt}\leq (1-f^{\rm opt})K,
				$
				where $K$ is a positive constant. Hence, $f^{\rm opt}(T)<1$. Therefore, by~\eqref{eq:s_lambda_pq_circle}, $s_p^{\rm opt}(T), s_q^{\rm opt}(T)>0$.
				
				Let $T=\infty$. By Lemma~\ref{lem:lambda_1d} and~\eqref{eq:s_lambda_pq_circle},
				$$
				s_p(t)\leq \frac{b_p^2}{4}\left(\gamma(1-f)+\Psi_2e^{\theta t}\right)^2,\qquad
				s_q(t)\leq \frac{b_q^2}{4}\Big(\gamma(1-f)(1-e^{-y})\Big)^2.
				$$
				
				Since $\lim\limits_{t\to\infty}f(t)=1$, then $\lim\limits_{t\to\infty}s_q(t)=0$.
				To show that $\lim\limits_{t\to\infty}s_p(t)=0$, we need to show that $\lim\limits_{t\to\infty}\Psi_2 e^{\theta t}=0$. Indeed, by L'Hospital's rule, and using~$\lim\limits_{t\to\infty}s_q^{\rm opt}(t)=0$,~\eqref{eq:bvp_pq_circle} and~\eqref{eq:psi_2_bound},
				\begin{align*}
					0\leq \lim\limits_{t\to\infty}\frac{\Psi_2(t)}{e^{-\theta t}}&=\lim\limits_{t\to\infty}\frac{(\gamma e^{-\theta t}+\Psi_1)(1-f)\left(q+b_q\sqrt{s_q(t)}\right)e^{-y}}{\theta e^{-\theta t}}\\&
					\leq \lim\limits_{t\to\infty}\frac{\gamma e^{-\theta t}(1-f)\left(q+b_q\sqrt{s_q(t)}\right)e^{-y}}{\theta e^{-\theta t}}=0.
				\end{align*}
			\end{proof}
			
			\section{Heterogeneous complete networks}
			\label{app:het}
			
			The reduction of the aggregate dynamics to two ordinary differential equations, see~\eqref{eq:groups_limit}, enables us to formulate the optimal promotion as a boundary-value problem with four equations:
			\begin{theorem}
				\label{thm:opt_group_same}
				Consider the Bass model~{\rm (\ref{eqs:Bass-model},~\ref{eq:het_conditions})} on a heterogeneous complete network with two homogeneous groups of equal size in which the effect of promotions is given by~\eqref{eq:p_q_s_het}. Then 
				\begin{subequations}
					\label{eqs:pq_opt_groups}
					\begin{equation}
						\label{eq:s_lambda_pq_groups}
						\begin{aligned}
							s_p^{\rm opt}(t)&=\frac{1}{4}\Big(\sum_{k=1}^{2}b_{p}^k\big(\frac{1}{2}-f_k\big)\big(\gamma +\Psi_ke^{\theta t}\big)\Big)^2, \quad
							%\\
							s_q^{\rm opt}(t)=\frac{1}{4}\Big(\big(f_1+f_2\big)\sum_{k=1}^{2}b_{q}^k\big(\frac{1}{2}-f_k\big)\big(\gamma +\Psi_ke^{\theta t}\big)\Big)^2,
						\end{aligned}
					\end{equation}
					where $f_1^{\rm opt}, f_2^{\rm opt}, \Psi_1^{\rm opt},$ and $\Psi_2^{\rm opt}$ are the solutions of the boundary-value problem 
					\begin{equation}
						\label{eq:bvp_pq_groups}
						\begin{aligned}
							\frac{df_k}{dt}&=\Big(\frac{1}{2}-f_k\Big)R_k,\qquad &&f_k(0)=0, \qquad k=1,2,
							\\
							\frac{d\Psi_1}{dt}
							&=(\gamma e^{-\theta t}+\Psi_1) \Big(R_1-\Big(\frac{1}{2}-f_1\Big)\Big(q_0^1+b_{q}^1\sqrt{s_q^{\rm opt}}\Big)\Big)\\&\qquad\qquad\qquad-\left(\gamma e^{-\theta t}+\Psi_2\right)\Big(\frac{1}{2}-f_2\Big)\Big(q_0^2+b_{q}^2\sqrt{s_q^{\rm opt}}\Big), \qquad &&\Psi_1(T)=0,
							\\
							\frac{d\Psi_2}{dt}
							&=(\gamma e^{-\theta t}+\Psi_2) \Big(R_2-\Big(\frac{1}{2}-f_2\Big)\Big(q_0^2+b_{q}^2\sqrt{s_q^{\rm opt}}\Big)\Big)
							\\&\qquad\qquad\qquad-\Big(\gamma e^{-\theta t}+\Psi_1\Big)\Big(\frac{1}{2}-f_1\Big)\Big(q_0^1+b_{q}^1\sqrt{s_q^{\rm opt}}\Big),\qquad &&\Psi_2(T)=0,
						\end{aligned}
					\end{equation}
					and 
					\begin{equation}
						\label{eq:R_values}
						R_k:=p_0^k+b_{p^k}\sqrt{s_p^{\rm opt}}+\Big(q_0^k+b_{q}^k\sqrt{s_q^{\rm opt}}\Big)(f_1+f_2), \qquad k=1,2.
					\end{equation}
				\end{subequations}
			\end{theorem}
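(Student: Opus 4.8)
\emph{Proof plan.} The plan is to mirror the proofs of Theorem~\ref{lem:opt_complete_pq} and Corollary~\ref{cor:opt_complete_pq}, replacing the homogeneous compartmental limit by the heterogeneous one of Theorem~\ref{thm:groups_convergence}. As $M\to\infty$, the aggregate dynamics of the network~(\ref{eqs:Bass-model},~\ref{eq:het_conditions}) reduce, by Theorem~\ref{thm:groups_convergence}, to the two ODEs~\eqref{eq:groups_limit} for $(f_1,f_2)$, where now $p^k=p^k(s_p)$ and $q^k=q^k(s_q)$ are given by~\eqref{eq:p_q_s_het}. Writing $f=f_1+f_2$, the discounted profit rate of~\eqref{eq:profit} becomes $\pi=\big(\gamma(\tfrac{df_1}{dt}+\tfrac{df_2}{dt})-s_p-s_q\big)e^{-\theta t}$. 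Hence the optimal promotion solves an optimal control problem with the two state variables $(f_1,f_2)$, the two controls $(s_p,s_q)$, and the dynamics $\tfrac{df_k}{dt}=g_k:=(\tfrac12-f_k)R_k$, with $R_k$ as in~\eqref{eq:R_values}. (As in the proof of Theorem~\ref{lem:opt_complete_pq}, the passage of the control problem to the $M\to\infty$ limit is justified by Theorem~\ref{thm:groups_convergence}.)

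First I would form the Hamiltonian of Theorem~\ref{thm:pontryagin},
$H=\pi+\Psi_1 g_1+\Psi_2 g_2=\sum_{k=1}^{2}(\gamma e^{-\theta t}+\Psi_k)\,g_k-(s_p+s_q)e^{-\theta t}$, and derive the costate equations $\tfrac{d\Psi_k}{dt}=-\partial H/\partial f_k$ with $\Psi_k(T)=0$. The one place that requires care is the cross-coupling: $R_k$ depends on $f_1+f_2$, so $g_1$ and $g_2$ each depend on both state variables, giving $\partial g_j/\partial f_k=(\tfrac12-f_j)(q_0^j+b_{q^j}\sqrt{s_q})$ for $j\neq k$ in addition to the diagonal term $\partial g_k/\partial f_k=-R_k+(\tfrac12-f_k)(q_0^k+b_{q^k}\sqrt{s_q})$. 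Collecting these contributions for $k=1,2$ yields exactly the cross-coupled system~\eqref{eq:bvp_pq_groups} for $(\Psi_1,\Psi_2)$.

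Next I would impose the optimality conditions $\partial H/\partial s_p=\partial H/\partial s_q=0$ from~\eqref{eq:s_condition}. Since $s_p$ enters each $R_k$ only through $b_{p^k}\sqrt{s_p}$ and $s_q$ only through $b_{q^k}\sqrt{s_q}\,(f_1+f_2)$, one has $\partial g_k/\partial s_p=(\tfrac12-f_k)\tfrac{b_{p^k}}{2\sqrt{s_p}}$ and $\partial g_k/\partial s_q=(\tfrac12-f_k)(f_1+f_2)\tfrac{b_{q^k}}{2\sqrt{s_q}}$. Substituting into $\partial H/\partial s_p=0$ and $\partial H/\partial s_q=0$, multiplying through by $e^{\theta t}$, and solving for $\sqrt{s_p}$ and $\sqrt{s_q}$ gives the closed-form expressions~\eqref{eq:s_lambda_pq_groups}.

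The main obstacle is not any single delicate estimate but the bookkeeping of the two-group coupling in the costate equations: once the partial derivatives $\partial g_j/\partial f_k$ are organized correctly, everything else reduces to the same routine substitution as in the homogeneous case of Corollary~\ref{cor:opt_complete_pq}.
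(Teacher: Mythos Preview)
Your proposal is correct and follows essentially the same approach as the paper: form the Hamiltonian $H=\pi+\Psi_1 g_1+\Psi_2 g_2$ from the two-group compartmental limit~\eqref{eq:groups_limit}, derive the costate equations $\frac{d\Psi_k}{dt}=-\partial H/\partial f_k$ (carefully tracking the cross terms $\partial g_j/\partial f_k$ for $j\neq k$), and then solve $\partial H/\partial s_p=\partial H/\partial s_q=0$ for $\sqrt{s_p}$ and $\sqrt{s_q}$ using the square-root form~\eqref{eq:p_q_s_het}. Your organization of the partial derivatives $\partial g_j/\partial f_k$ is in fact slightly more transparent than the paper's presentation, but the computation and its outcome are identical.
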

			\begin{proof}
				By Theorem~\ref{thm:pontryagin}, any solution of~\eqref{eq:profit} must satisfy $\frac{\partial H}{\partial s_p}=\frac{\partial H}{\partial s_q}=0$, where
				\begin{subequations}
					\label{eqs:ham_pq_same}
					\begin{equation}
						\label{eq:ham_pq_same}
						H(t)=\Big(\gamma \Big(\frac{df_1}{dt}+\frac{df_2}{dt}\Big)-s_p-s_q\Big)e^{-\theta t}+\Psi_1\frac{df_1}{dt}+\Psi_2\frac{df_2}{dt},
					\end{equation}
					and
					\begin{equation}
						\label{eq:lambda_pq_same}
						\frac{d\Psi_k}{dt}=-\frac{\partial H}{\partial f_k}, \qquad \Psi_k(T)=0, \qquad k=1,2.
					\end{equation}
				\end{subequations}
				Substituting~\eqref{eq:p_q_s_het}~and~\eqref{eq:groups_limit}~in~\eqref{eqs:ham_pq_same} yields
				\begin{subequations}
					\label{eqs:H_lambda_pq_same}
					\begin{equation*}
						\begin{aligned}
							H
							&=(\gamma e^{-\theta t} +\Psi_1) \Big(\frac{1}{2}-f_1\Big)\left(p_0^1+b_{p_1}\sqrt{s_p}+\left(q_0^1+b_{q_1}\sqrt{s_q}\right)(f_1+f_2)\right)\\ &\qquad+(\gamma e^{-\theta t} +\Psi_2) \Big(\frac{1}{2}-f_2\Big)\left(p_0^2+b_{p_2}\sqrt{s_p}+\left(q_0^2+b_{q_2}\sqrt{s_q}\right)(f_1+f_2)\right)-\left(s_p+s_q\right)e^{-\theta t},
						\end{aligned}		
					\end{equation*}
					and
					\begin{equation*}
						\label{eq:lambda_prime_pq_same}
						\begin{aligned}
							\frac{d\Psi_1}{dt}
							&=\left(\gamma e^{-\theta t}+\Psi_1\right)\left(p_0^1+b_{p_1}\sqrt{s_p}+\left(q_0^1+b_{q_1}\sqrt{s_q}\right)\left(2f_1+f_2-\frac{1}{2}\right)\right)\\ &\qquad\qquad-\left(\gamma e^{-\theta t}+\Psi_2\right)\left(q_0^2+b_{q_2}\sqrt{s_q}\right)\Big(\frac{1}{2}-f_2\Big), \qquad &\Psi_1(T)=0, \\
							\frac{d\Psi_2}{dt}
							&=\left(\gamma e^{-\theta t}+\Psi_2\right)\left(p_2+b_{p_2}\sqrt{s_p}+\left(q_2+b_{q_2}\sqrt{s_q}\right)\left(2f_2+f_1-\frac{1}{2}\right)\right)\\ &\qquad\qquad-\left(\gamma e^{-\theta t}+\Psi_1\right)\left(q_0^1+b_{q_1}\sqrt{s_q}\right)\Big(\frac{1}{2}-f_1\Big), \qquad &\Psi_2(T)=0.
						\end{aligned}
					\end{equation*}
					
				\end{subequations}
				Since $\frac{\partial H}{\partial s_p}=\frac{\partial H}{\partial s_q}=0$, we have
				\begin{align*}
					\label{eq:neccesary_pq_same}
					0&=\frac{b_{p_1}\Big(\frac{1}{2}-f_1\Big)\left(\gamma e^{-\theta t}+\Psi_1(t)\right)+b_{p_2}\Big(\frac{1}{2}-f_2\Big)\left(\gamma e^{-\theta t}+\Psi_2(t)\right)}{2\sqrt{s_p(t)}}-e^{-\theta t},\\
					0&=(f_1+f_2)\frac{b_{q_1}\Big(\frac{1}{2}-f_1\Big)\left(\gamma e^{-\theta t}+\Psi_1(t)\right)+b_{q_2}\Big(\frac{1}{2}-f_2\Big)\left(\gamma e^{-\theta t}+\Psi_2(t)\right)}{2\sqrt{s_q(t)}}-e^{-\theta t}.
				\end{align*}
				These equations lead to~\eqref{eq:s_lambda_pq_groups}.
			\end{proof}

			When we apply a heterogeneous promotion, see~\eqref{eq:p_q_s_diff}, we arrive at
			\begin{theorem}
				\label{thm:opt_group_diff}
				Consider the Bass model~{\rm (\ref{eqs:Bass-model},~\ref{eq:het_conditions})} on a heterogeneous complete network with two homogeneous groups of equal size in which the effect of promotions is given by~\eqref{eq:p_q_s}. Then
				\begin{subequations}
					\label{eqs:pq_opt_groups_diff}
					\begin{equation}
						\label{eq:s_lambda_pq_groups_diff}
						\begin{aligned}
							s_{p}^{k,\rm opt}(t)&=\Big(b_{p^k}\left(\frac{1}{2}-f_k\right)\Big(\gamma +\Psi_ke^{\theta t}\Big)\Big)^2, \qquad&k=1,2,
							\\
							s_{q}^{k,\rm opt}(t)&=\Big((f_1+f_2)b_{q}^k\Big(\frac{1}{2}-f_k\Big)\Big(\gamma +\Psi_k e^{\theta t}\Big)\Big)^2,\qquad &k=1,2,
						\end{aligned}
					\end{equation}
					where $f_1^{\rm opt}, f_2^{\rm opt}, \Psi_1^{\rm opt},$ and $\Psi_2^{\rm opt}$ are the solutions of the boundary-value problem 
					\begin{equation}
						\label{eq:bvp_pq_groups_diff}
						\begin{aligned}
							\frac{df_k}{dt}&=\Big(\frac{1}{2}-f_k\Big)R_k,\qquad &&f_k(0)=0,\quad k=1,2,\\
							\frac{d\Psi_1}{dt}
							&=(\gamma e^{-\theta t}+\Psi_1) \Big(R_1-\Big(\frac{1}{2}-f_1\Big)\left(q_0^1+b_{q}^1\sqrt{s_{q}^{1,\rm opt}}\right)\Big)\\&\qquad\qquad\qquad-\left(\gamma e^{-\theta t}+\Psi_2\right)\Big(\frac{1}{2}-f_2\Big)\left(q_0^2+b_{q}^2\sqrt{s_{q}^{2,\rm opt}}\right), \qquad &&\Psi_1(T)=0,
							\\
							\frac{d\Psi_2}{dt}
							&=(\gamma e^{-\theta t}+\Psi_2) \Big(R_2-\Big(\frac{1}{2}-f_2\Big)\Big(q_0^2+b_{q}^2\sqrt{s_{q}^{2,\rm opt}}\Big)\Big)\\&\qquad\qquad\qquad -\left(\gamma e^{-\theta t}+\Psi_1\right)\Big(\frac{1}{2}-f_1\Big)\Big(q_0^1+b_{q}^1\sqrt{s_{q}^{1,\rm opt}}\Big),\qquad &&\Psi_2(T)=0,
						\end{aligned}
					\end{equation}
					and 
					\begin{equation}
						\label{eq:R_values_diff}
						R_k:=p_0^k+b_{p^k}\sqrt{s_{p}^{k,\rm opt}}+\left(q_0^k+b_{q}^k\sqrt{s_{q}^{k,\rm opt}}\right)(f_1+f_2), \qquad k=1,2.
					\end{equation}
				\end{subequations}
			\end{theorem}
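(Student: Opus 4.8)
The plan is to follow the same route as the proof of Theorem~\ref{thm:opt_group_same}: combine the two-equation compartmental limit provided by Theorem~\ref{thm:groups_convergence} with Pontryagin's Maximum Principle (Theorem~\ref{thm:pontryagin}). The only difference from the uniform case is that we now carry four independent controls $s_{p^1},s_{p^2},s_{q^1},s_{q^2}$ and use the group-specific response functions~\eqref{eq:p_q_s_diff}.

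First I would take the state vector ${\bf x}(t)=(f_1(t),f_2(t))$ with dynamics $\frac{df_k}{dt}=g_k:=(\tfrac12-f_k)R_k$, $f_k(0)=0$, $k=1,2$, where $R_k$ is as in~\eqref{eq:R_values_diff}. The structural simplification relative to the uniform case is that $s_{p^k}$ and $s_{q^k}$ enter $H$ only through $g_k$ (via $b_{p^k}\sqrt{s_{p^k}}$ and $b_{q^k}\sqrt{s_{q^k}}$ respectively), and not through $g_{3-k}$. Since each group is half of the population, the discounted profit rate is $\pi=\big(\gamma(\tfrac{df_1}{dt}+\tfrac{df_2}{dt})-\tfrac12(s_{p^1}+s_{q^1}+s_{p^2}+s_{q^2})\big)e^{-\theta t}$, so the Hamiltonian is
\[
H=\sum_{k=1}^{2}\big(\gamma e^{-\theta t}+\Psi_k\big)\Big(\tfrac12-f_k\Big)R_k-\tfrac12\big(s_{p^1}+s_{q^1}+s_{p^2}+s_{q^2}\big)e^{-\theta t}.
\]

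Next I would compute the costate equations $\frac{d\Psi_k}{dt}=-\frac{\partial H}{\partial f_k}$, $\Psi_k(T)=0$, to obtain~\eqref{eq:bvp_pq_groups_diff}. Here the only computation that needs care is that $f_1$ and $f_2$ each occur in \emph{both} $g_1$ and $g_2$ — in the prefactor $\tfrac12-f_k$ of their own equation and in the common factor $f_1+f_2$ inside every $R_k$ — so $\partial H/\partial f_k$ picks up the cross term $(\gamma e^{-\theta t}+\Psi_{3-k})(\tfrac12-f_{3-k})\big(q_0^{3-k}+b_{q^{3-k}}\sqrt{s_{q^{3-k}}^{\rm opt}}\big)$, exactly as in Theorem~\ref{thm:opt_group_same}. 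Finally I would impose the first-order optimality conditions $\frac{\partial H}{\partial s_{p^k}}=\frac{\partial H}{\partial s_{q^k}}=0$: since $s_{p^k}$ sits in $H$ only inside $(\gamma e^{-\theta t}+\Psi_k)(\tfrac12-f_k)b_{p^k}\sqrt{s_{p^k}}$, this reads
\[
\frac{\big(\gamma e^{-\theta t}+\Psi_k\big)\big(\tfrac12-f_k\big)b_{p^k}}{2\sqrt{s_{p^k}}}=\tfrac12 e^{-\theta t},
\]
and similarly for $s_{q^k}$ with the extra factor $f_1+f_2$; solving for $s_{p^k}$ and $s_{q^k}$ yields~\eqref{eq:s_lambda_pq_groups_diff}.

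I do not expect a genuine obstacle here: the argument is a bookkeeping variant of the uniform-promotion proof, with Pontryagin's principle doing all the work and the compartmental reduction of Theorem~\ref{thm:groups_convergence} keeping the system finite. The two points to watch are getting the two-group coupling in $\partial H/\partial f_k$ right (the cross term above) and using the correct $\tfrac12$-normalization of the per-group spending in $\pi$ — it is this normalization that removes the factor $\tfrac14$ present in the uniform-promotion formula~\eqref{eq:s_lambda_pq_groups} and produces a leading coefficient $1$ in~\eqref{eq:s_lambda_pq_groups_diff}.
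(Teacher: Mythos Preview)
Your proposal is correct and follows essentially the same route as the paper's proof: apply Pontryagin's Maximum Principle to the two-state compartmental limit of Theorem~\ref{thm:groups_convergence} with the four independent controls and the $\tfrac12$-weighted spending in~$\pi$, compute the costate equations $\frac{d\Psi_k}{dt}=-\partial H/\partial f_k$ (picking up the cross term from the shared factor $f_1+f_2$), and solve $\partial H/\partial s_{p^k}=\partial H/\partial s_{q^k}=0$. Your observation that the $\tfrac12$ in the spending term is what turns the $\tfrac14$ of~\eqref{eq:s_lambda_pq_groups} into the coefficient~$1$ in~\eqref{eq:s_lambda_pq_groups_diff} is exactly the point.
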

			\begin{proof}
				By Theorem~\ref{thm:pontryagin}, any solution of~\eqref{eq:profit} must satisfy $\frac{\partial H}{\partial s_p^k}=\frac{\partial H}{\partial s_q^k}=0$, where
				\begin{subequations}
					\label{eqs:ham_pq_diff}
					\begin{equation}
						\label{eq:ham_pq_diff}
						H=\Big(\gamma \Big(\frac{df_1}{dt}+\frac{df_2}{dt}\Big)-\frac{1}{2}(s_{p_1}+s_{p_2}+s_{q_1}+s_{q_2})\Big)e^{-\theta t}+\Psi_1\frac{df_1}{dt}+\Psi_2\frac{df_2}{dt},
					\end{equation}
					and
					\begin{equation}
						\label{eq:lambda_pq_diff}
						\frac{d\Psi_k}{dt}=-\frac{\partial H}{\partial f_k}, \qquad \Psi_k(T)=0, \qquad k=1,2.
					\end{equation}
				\end{subequations}
				Substituting~\eqref{eq:groups_limit} with~\eqref{eq:p_q_s_diff} in~\eqref{eqs:ham_pq_diff} yields
				\begin{subequations}
					\label{eqs:H_lambda_pq_diff}
					\begin{equation*}
						\begin{aligned}
							H(t)
							&=(\gamma e^{-\theta t} +\Psi_1) (\frac{1}{2}-f_1)\Big(p_0^1+b_{p_1}\sqrt{s_{p_1}(t)}+\Big(q_0^1+b_{q_1}\sqrt{s_{q_1}(t)}\Big)(f_1+f_2)\Big)\\ &\qquad+(\gamma e^{-\theta t} +\Psi_2) (\frac{1}{2}-f_2)\Big(p_0^2+b_{p_2}\sqrt{s_{p_2}(t)}+\Big(q_0^2+b_{q_2}\sqrt{s_{q_2}(t)}\Big)(f_1+f_2)\Big)\\ &\qquad-\frac{1}{2}\left(s_{p_1}(t)+s_{p_2}(t)+s_{q_1}(t)+s_{q_2}(t)\right)e^{-\theta t},
						\end{aligned}		
					\end{equation*}
					and
					\begin{equation*}
						\label{eq:lambda_prime_pq_diff}
						\begin{aligned}
							\frac{d\Psi_1}{dt}
							&=\Big(\gamma e^{-\theta t}+\Psi_1(t)\Big)\Big(p_0^1+b_{p_1}\sqrt{s_{p_1}(t)}+\Big(q_0^1+b_{q_1}\sqrt{s_{q_1}(t)}\Big)\Big(2f_1+f_2-\frac{1}{2}\Big)\Big)\\ &\qquad\qquad-\Big(\gamma e^{-\theta t}+\Psi_2(t)\Big)\Big(q_0^2+b_{q_2}\sqrt{s_{q_2}(t)}\Big)\Big(\frac{1}{2}-f_2\Big), \qquad &\Psi_1(T)=0, \\
							\frac{d\Psi_2}{dt}
							&=\Big(\gamma e^{-\theta t}+\Psi_2(t)\Big)\Big(p_0^2+b_{p_2}\sqrt{s_{p_2}(t)}+\Big(q_0^2+b_{q_2}\sqrt{s_{q_2}(t)}\Big)\Big(2f_2+f_1-\frac{1}{2}\Big)\Big)\\ &\qquad\qquad-\left(\gamma e^{-\theta t}+\Psi_1(t)\right)\Big(q_0^1+b_{q_1}\sqrt{s_{q_1}(t)}\Big)\Big(\frac{1}{2}-f_1\Big), \qquad &\Psi_2(T)=0.
						\end{aligned}
					\end{equation*}
					
				\end{subequations}
				Since $\frac{\partial H}{\partial s_{p_1}}=\frac{\partial H}{\partial s_{p_2}}=\frac{\partial H}{\partial s_{q_1}}=\frac{\partial H}{\partial s_{q_2}}=0$, we have for $k=1,2$,
				\begin{equation*}
					\label{eq:neccesary_pq_diff}
					0=\frac{b_{p_k}\left(\frac{1}{2}-f_k\right)\left(\gamma e^{-\theta t}+\Psi_k(t)\right)}{2\sqrt{s_{p_k}(t)}}-\frac{1}{2}e^{-\theta t},\qquad
					0=(f_1+f_2)\frac{b_{q_k}\left(\frac{1}{2}-f_k\right)\left(\gamma e^{-\theta t}+\Psi_k(t)\right)}{2\sqrt{s_{q_k}(t)}}-\frac{1}{2}e^{-\theta t}.
				\end{equation*}
				These equations lead to~\eqref{eq:s_lambda_pq_groups_diff}.
			\end{proof}
			
			When we apply a heterogeneous promotion to an innovator-imitator population, see~\eqref{eq:innovator}, we arrive at
			
			\begin{theorem}
				\label{thm:opt_influencer}
				Consider the Bass model~{\rm (\ref{eqs:Bass-model},~\ref{eq:innovator})} on a heterogeneous complete network with two homogeneous groups of equal size in which the effect of promotions is given by~\eqref{eq:p_q_s}. Then
				\begin{subequations}
					\label{eqs:pq_opt_groups_inn}
					\begin{equation}
						\label{eq:s_lambda_pq_groups_inn}
						\begin{aligned}
							s_{p_1}^{\rm opt}(t)&=\Big(b_{p_1}\left(\frac{1}{2}-f_1\right)\Big(\gamma +\Psi_1e^{\theta t}\Big)+b_p^{1\to 2}\left(\frac{1}{2}-f_2\right)\Big(\gamma +\Psi_2e^{\theta t}\Big)\Big)^2, 
							\\
							s_{q_1}^{\rm opt}(t)&=\Big(\left(f_1+f_2\right)\Big(b_{q_1}\left(\frac{1}{2}-f_1\right)\Big(\gamma +\Psi_1e^{\theta t}\Big)+b_q^{1\to 2}\left(\frac{1}{2}-f_2\right)\Big(\gamma +\Psi_2e^{\theta t}\Big)\Big)\Big)^2, \\
							s_{p_2}^{\rm opt}(t)&=\Big(b_{p_2}\Big(\frac{1}{2}-f_2\Big)\Big(\gamma +\Psi_2 e^{\theta t}\Big)\Big)^2,\\
							s_{q_2}^{\rm opt}(t)&=\Big((f_1+f_2)b_{q_2}\Big(\frac{1}{2}-f_2\Big)\Big(\gamma +\Psi_2 e^{\theta t}\Big)\Big)^2.
						\end{aligned}
					\end{equation}
					where $f_1^{\rm opt}, f_2^{\rm opt}, \Psi_1^{\rm opt},$ and $\Psi_2^{\rm opt}$ are the solutions of the boundary-value problem 
					\begin{equation}
						\label{eq:bvp_pq_groups_inn}
						\begin{aligned}
							\frac{df_k}{dt}&=\Big(\frac{1}{2}-f_k\Big)R_k, &f_k(0)=0,\quad k=1,2,
							\\
							\frac{d\Psi_1}{dt}
							&=(\gamma e^{-\theta t}+\Psi_1) \Big(R_1-\Big(\frac{1}{2}-f_1\Big)\left(q_0^1+b_{q}^1\sqrt{s_{q}^{1,\rm opt}}\right)\Big) 
							\\
							&\qquad-\left(\gamma e^{-\theta t}+\Psi_2\right)\Big(\frac{1}{2}-f_2\Big)\left(q_0^2+b_{q}^2\sqrt{s_{q}^{2,\rm opt}}+b_q^{1\to 2}\sqrt{s_{q}^{1,\rm opt}}\right),  &\Psi_1(T)=0,
							\\
							\frac{d\Psi_2}{dt} & = (\gamma e^{-\theta t}+\Psi_2) \Big(R_2-\Big(\frac{1}{2}-f_2\Big)\Big(q_0^2+b_{q}^2\sqrt{s_{q}^{2,\rm opt}}+b_q^{1\to 2}\sqrt{s_{q}^{1,\rm opt}}\Big)\Big)
							\\
							& \qquad -\left(\gamma e^{-\theta t}+\Psi_1\right)\Big(\frac{1}{2}-f_1\Big)\Big(q_0^1+b_{q}^1\sqrt{s_{q}^{1,\rm opt}}\Big), &\Psi_2(T)=0,
						\end{aligned}
					\end{equation}
					and 
					\begin{equation}
						\label{eq:R_values_inn}
						\begin{aligned}
							R_1&:=p_0^1+b_{p}^1\sqrt{s_{p}^{1,\rm opt}}+\left(q_0^1+b_{q}^1\sqrt{s_{q}^{1,\rm opt}}\right)(f_1+f_2),\\
							R_2&:=p_0^2+b_{p}^2\sqrt{s_{p}^{2, \rm opt}}+c_{p}\sqrt{s_{p}^{1,\rm opt}}+\left(q_0^2+b_{q}^2\sqrt{s_{q}^{1,\rm opt}}+c_{q}\sqrt{s_{q}^{1,\rm opt}}\right)(f_1+f_2),\\
						\end{aligned}
					\end{equation}
				\end{subequations}
			\end{theorem}
			\begin{proof}
				By Theorem~\ref{thm:pontryagin}, any solution of~\eqref{eq:profit} must satisfy $\frac{\partial H}{\partial s_p^k}=\frac{\partial H}{\partial s_q^k}=0$, where
				\begin{subequations}
					\label{eqs:ham_pq_inn}
					\begin{equation}
						\label{eq:ham_pq_inn}
						H=\Big(\gamma \Big(\frac{df_1}{dt}+\frac{df_2}{dt}\Big)-\frac{1}{2}(s_{p_1}+s_{p_2}+s_{q_1}+s_{q_2})\Big)e^{-\theta t}+\Psi_1\frac{df_1}{dt}+\Psi_2\frac{df_2}{dt},
					\end{equation}
					and
					\begin{equation}
						\label{eq:lambda_pq_inn}
						\frac{d\Psi_k}{dt}=-\frac{\partial H}{\partial f_k}, \qquad \Psi_k(T)=0, \qquad k=1,2.
					\end{equation}
				\end{subequations}
				Substituting~\eqref{eq:groups_limit} with~\eqref{eq:innovator} in~\eqref{eqs:ham_pq_inn} yields
				\begin{subequations}
					\label{eqs:H_lambda_pq_inn}
					\begin{equation*}
						\begin{aligned}
							H(t)
							&=(\gamma e^{-\theta t} +\Psi_1) (\frac{1}{2}-f_1)\Big(p_0^1+b_{p_1}\sqrt{s_{p_1}(t)}+\Big(q_0^1+b_{q_1}\sqrt{s_{q_1}(t)}\Big)(f_1+f_2)\Big)\\ &+(\gamma e^{-\theta t} +\Psi_2) (\frac{1}{2}-f_2)\Big(p_0^2+b_{p_2}\sqrt{s_{p_2}(t)}+b_p^{1\to 2}\sqrt{s_{p_1}(t)}+\Big(q_0^2+b_{q_2}\sqrt{s_{q_2}(t)}+c_{q}\sqrt{s_{q_1}(t)}\Big)(f_1+f_2)\Big)\\ &\qquad-\frac{1}{2}\left(s_{p_1}(t)+s_{p_2}(t)+s_{q_1}(t)+s_{q_2}(t)\right)e^{-\theta t},
						\end{aligned}		
					\end{equation*}
					and
					\begin{equation*}
						\label{eq:lambda_prime_pq_inn}
						\begin{aligned}
							\frac{d\Psi_1}{dt}
							&=\Big(\gamma e^{-\theta t}+\Psi_1(t)\Big)\Big(p_0^1+b_{p_1}\sqrt{s_{p_1}(t)}+\Big(q_0^1+b_{q_1}\sqrt{s_{q_1}(t)}\Big)\Big(2f_1+f_2-\frac{1}{2}\Big)\Big)\\ &\qquad\qquad-\Big(\gamma e^{-\theta t}+\Psi_2(t)\Big)\Big(q_0^2+b_{q_2}\sqrt{s_{q_2}(t)}+c_{q}\sqrt{s_{q_1}(t)}\Big)\Big(\frac{1}{2}-f_2\Big), \qquad\ \  \Psi_1(T)=0, \\
							\frac{d\Psi_2}{dt}
							&=\Big(\gamma e^{-\theta t}+\Psi_2(t)\Big)\Big(p_0^2+b_{p_2}\sqrt{s_{p_2}(t)}+b_p^{1\to 2}\sqrt{s_{p_1}(t)}+\Big(q_0^2+b_{q_2}\sqrt{s_{q_2}(t)}+c_{q}\sqrt{s_{q_1}(t)}\Big)\Big(2f_2+f_1-\frac{1}{2}\Big)\Big)\\ &\qquad\qquad-\left(\gamma e^{-\theta t}+\Psi_1(t)\right)\Big(q_0^1+b_{q_1}\sqrt{s_{q_1}(t)}\Big)\Big(\frac{1}{2}-f_1\Big), \qquad \qquad \qquad \qquad \Psi_2(T)=0.
						\end{aligned}
					\end{equation*}
					
				\end{subequations}
				Since $\frac{\partial H}{\partial s_{p_1}}=\frac{\partial H}{\partial s_{p_2}}=\frac{\partial H}{\partial s_{q_1}}=\frac{\partial H}{\partial s_{q_2}}=0$, we have
				\begin{align*}
					0&=\frac{b_{p_1}\left(\frac{1}{2}-f_1\right)\left(\gamma e^{-\theta t}+\Psi_1(t)\right)}{2\sqrt{s_{p_1}(t)}}+\frac{b_p^{1\to 2}\left(\frac{1}{2}-f_2\right)\left(\gamma e^{-\theta t}+\Psi_2(t)\right)}{2\sqrt{s_{p_1}(t)}}-\frac{1}{2}e^{-\theta t},\\
					0&=(f_1+f_2)\frac{b_{q_1}\left(\frac{1}{2}-f_1\right)\left(\gamma e^{-\theta t}+\Psi_1(t)\right)}{2\sqrt{s_{q_1}(t)}}+(f_1+f_2)\frac{b_q^{1\to 2}\left(\frac{1}{2}-f_2\right)\left(\gamma e^{-\theta t}+\Psi_2(t)\right)}{2\sqrt{s_{q_1}(t)}}-\frac{1}{2}e^{-\theta t},\\
					0&=\frac{b_{p_2}\left(\frac{1}{2}-f_2\right)\left(\gamma e^{-\theta t}+\Psi_2(t)\right)}{2\sqrt{s_{p_2}(t)}}-\frac{1}{2}e^{-\theta t},\\
					0&=(f_1+f_2)\frac{b_{q_2}\left(\frac{1}{2}-f_2\right)\left(\gamma e^{-\theta t}+\Psi_2(t)\right)}{2\sqrt{s_{q_2}(t)}}-\frac{1}{2}e^{-\theta t}.
				\end{align*}
				These equations lead to~\eqref{eq:s_lambda_pq_groups_inn}.
			\end{proof}
			%\section{Proof of Theorem~\ref{thm:opt_group_same}}
			%\label{app:thm:opt_g%\section{Proof of Theorem~\ref{thm:opt_group_diff}}
				%\label{app:thm:opt_group_diff}roup_same}

			%\section{Proof of Theorem~\ref{thm:opt_group_diff}}
			%\label{app:thm:opt_group_diff}

			\section{Proof of Theorem~\ref{thm:non_uniform}}
			\label{app:thm:non_uniform}
			
			Set $z(t) := f_1(t)+f_2(t)$. By concavity of the square root,
			\begin{equation}
				\label{eq:concavity-square-root}
				s_p(t)+ s_q(t)
				=\Bigg(\frac{\sqrt{s_p^1(t)}+\sqrt{s_p^2(t)}}{2}\Bigg)^2
				+\Bigg(\frac{\sqrt{s_q^1(t)}+\sqrt{s_q^2(t)}}{2}\Bigg)^2
				\le \frac{s_p^1(t)+s_p^2(t)}{2}+ \frac{s_q^1(t)+s_q^2(t)}{2}.
			\end{equation}
			Hence, since $\gamma\ge 0$,
			\begin{align*}
				\Pi[s_p^1,s_p^2,s_q^1,s_q^2]
				&= \gamma \int_0^T e^{-\theta t} z'(t)\,dt
				- \int_0^T e^{-\theta t}
				\left(\frac{s_p^1(t)+s_p^2(t)}{2} + \frac{s_q^1(t)+s_q^2(t)}{2}\right)dt \\
				&\le \gamma \int_0^T e^{-\theta t} z'(t)\,dt
				- \int_0^T e^{-\theta t}\bigl(s_p(t)+s_q(t)\bigr)dt.
			\end{align*}
			It thus suffices to prove
			\begin{equation}
				\label{eq:int_z'<=int_f'}
				\int_0^T e^{-\theta t} z'(t)\,dt
				\le
				\int_0^T e^{-\theta t} f'(t)\,dt.
			\end{equation}
			We will prove the stronger pointwise comparison
			$z(t)\le f(t)$ for all $t\in[0,T]$. To this end, define 
			\[
			x_i := \sqrt{s_p^i},
			\qquad
			y_i := \sqrt{s_q^i},
			\qquad
			\bar x := \frac{x_1+x_2}{2}=\sqrt{s_p},
			\qquad
			\bar y := \frac{y_1+y_2}{2}=\sqrt{s_q},
			\]
			\[
			D := f_1-f_2,
			\qquad
			A := p_0+b_p\bar x,
			\qquad
			B := q_0+b_q\bar y,
			\qquad
			C := b_p(x_1-x_2) + b_q z(y_1-y_2).
			\]
			A direct calculation from the two subgroup equations~\eqref{eq:f_k-proof} gives
			\begin{equation}
				z' = (1-z)(A+Bz) - \frac{1}{2}CD, \qquad 
				D' = \frac{1}{2}(1-z)C - (A+Bz)D. \label{eq:zDprime}
			\end{equation}
			
			We first note that $f_i(t)<1/2$ on $[0,S]$ for each $i=1,2$. Indeed, each quantity $g_i := \frac{1}{2}-f_i$ solves the linear equation
			\[
			g_i' = -g_i\bigl(p_0+b_p x_i+(q_0+b_q y_i)z\bigr),
			\qquad g_i(0)=\frac{1}{2},
			\]
			so $g_i(t)=\frac{1}{2}\exp\!\left(-\int_0^t \bigl(p_0+b_p x_i(\tau)+(q_0+b_q y_i(\tau))z(\tau)\bigr)\,d\tau\right)>0$. Hence $0\le f_i(t)<1/2$, which implies
			\[
			|D(t)| \le \Big|\frac12-f_1\Big|+\Big|\frac12-f_2\Big| = 1-z(t), \qquad  t\in[0,T].
			\]
			In particular, the normalized imbalance \(v := \frac{D}{1-z}\) is well defined and satisfies $|v|<1$ on $(0,T]$.
			
			Using~\eqref{eq:zDprime}, we compute
			\[
			v' = \frac{D'(1-z)+Dz'}{(1-z)^2} = \frac{C}{2}(1-v^2).
			\]
			Now define \(L := -\frac{1}{2}\log(1-v^2)\). Then $L\ge 0$, $L(0)=0$, and
			\[
			L' = \frac{vv'}{1-v^2} = \frac{vC}{2}.
			\]
			Since $D=(1-z)v$, equation \eqref{eq:zDprime} for $z'$ becomes
			\[
			z'=(1-z)\bigl(A+Bz-L'\bigr).
			\]
			Therefore, if we set
			\(w := \frac{e^L}{1-z}\),
			and recall that $B\ge 0$ and $e^L\ge 1$, we have 
			\begin{align*}
				w' = \frac{e^L L'}{1-z} + \frac{e^L z'}{(1-z)^2} = \frac{e^L}{1-z}\bigl(L' + A+Bz-L'\bigr) = (A+B)w - Be^L \le (A+B)w - B.
			\end{align*}

			For the uniform-promotion trajectory, define \(W := \frac{1}{1-f}\). Since $f'=(1-f)(A+Bf)$ and $f(0)=0$, one has 
			%$1-f>0$ on $[0,T]$, so $W$ is well defined. Moreover,
			\[
			W'=(A+B)W-B,
			\qquad W(0)=1,
			\qquad w(0)=1.
			\]
			Hence
			\[
			\bigl(e^{-\int_0^t (A+B)}(W-w)\bigr)' = e^{-\int_0^t (A+B)}\bigl((W-w)'-(A+B)(W-w)\bigr)\ge 0,
			\]
			Since $W(0)-w(0)=0$, it follows that
			\[
			w(t)\le W(t), \qquad  t\in[0,T].
			\]
			Finally, because $L\ge 0$,
			\[
			\frac{1}{1-z}=we^{-L}\le w\le W=\frac{1}{1-f}.
			\]
			Therefore $1-z\ge 1-f$, and so
			\[
			z(t)\le f(t), \qquad  t\in[0,T].
			\]
			This proves the pointwise comparison. Since $z(0)=f(0)=0$, integration by parts yields
			\[
			\int_0^T e^{-\theta t} z'(t)\,dt
			= e^{-\theta T}z(T) + \theta \int_0^T e^{-\theta t} z(t)\,dt
			\le e^{-\theta T}f(T) + \theta \int_0^T e^{-\theta t} f(t)\,dt
			= \int_0^T e^{-\theta t} f'(t)\,dt,
			\]
			which establishes~\eqref{eq:int_z'<=int_f'}. Finally, an equality in~\eqref{eq:concavity-square-root} holds if and only if $s_p^1(t) = s_p^2(t)$ and $s_q^1(t) = s_q^2(t)$ for all $t \in [0, T]$.

			\section{Proof of Corollary~\ref{cor:opt_complete_1/n}}
			\label{app:cor:opt_complete_1/n}
			
			Substituting~\eqref{eq:p_q_s_1/n} and~\eqref{eq:bvp_pq} into~\eqref{eq:H_inf}, the first-order conditions $\frac{\partial H}{\partial s_p} = \frac{\partial H}{\partial s_q} = 0$ yield
			\begin{equation*}
				0=\frac{b_p(1-f)(\gamma e^{-\theta t}+\Psi)(s_p)^{\frac{1}{k}-1}}{k}-e^{-\theta t}, \qquad
				0=\frac{b_q(1-f)f(\gamma e^{-\theta t}+\Psi)(s_q)^{\frac{1}{k}-1}}{n}-e^{-\theta t},
			\end{equation*}
			which implies~\eqref{eq:s_lambda_1/n}. Similarly, substituting~\eqref{eq:p_q_s_ln} and~\eqref{eq:bvp_pq} into~\eqref{eq:H_inf}, the same first-order conditions yield
			\begin{equation*}
				0 = \frac{b_p(1-f)(\gamma e^{-\theta t}+\Psi)}{1+s_p}-e^{-\theta t}, \qquad
				0 = \frac{b_q(1-f)f(\gamma e^{-\theta t}+\Psi)}{1+s_q}-e^{-\theta t},
			\end{equation*}
			from which it follows that
			\begin{equation*}
				s_p(t) = b_p(1-f)(\Psi e^{\theta t}+\gamma) - 1, \qquad s_q(t) = b_q(1-f)f(\Psi e^{\theta t}+\gamma) - 1.
			\end{equation*}
			By Lemma~\ref{lem:lambda}, whose proof carries over unchanged when replacing the square root with a logarithm, we have $\Psi e^{\theta t} + \gamma \geq 0$ for $f > \tfrac{1}{2}$. Consequently, as $f \to 1$, both $s_p$ and $s_q$ become negative, violating the feasibility condition $s_p, s_q \geq 0$. In this case, the maximum is attained on the boundary of ${\cal S}^m$, where $s_p=0$  or $s_q = 0$.

			\section{Proof of Corollary~\ref{cor:opt_complete_M_1/n}}
			\label{app:cor:opt_complete_M_1/n}
			
			Substituting~\eqref{eqs:pq_opt_M} and~\eqref{eq:p_q_s_1/n} into~\eqref{eq:H_comp}, the first-order conditions $\frac{\partial H}{\partial s_p}=0$ and $\frac{\partial H}{\partial s_q}=0$ yield
			\begin{align*}
				0&=\frac{\gamma e^{-\theta t} b_p [S]}{k}s_p^{\frac{1}{k}-1}-e^{-\theta t}-\sum_{n=1}^{M}\Psi_n\frac{b_p [S^n]}{k}s_p^{\frac{1}{k}-1}, \\
				0&=\frac{\gamma e^{-\theta t} b_q \left([S]-[S^2]\right)}{k}s_q^{\frac{1}{k}-1}-e^{-\theta t}-\sum_{n=1}^{M}\Psi_n\frac{b_q \left([S^n]-[S^{n+1}]\right)}{k}s_q^{\frac{1}{k}-1},
			\end{align*}
			which imply~\eqref{eq:s_lambda_pq_M_1/n}. Similarly, substituting~\eqref{eqs:pq_opt_M} and~\eqref{eq:p_q_s_ln} into~\eqref{eq:H_comp}, the same first-order conditions yield
			\begin{equation*}
				0=\frac{\gamma e^{-\theta t} b_p [S]}{1+s_p}-e^{-\theta t}-\sum_{n=1}^{M}\Psi_n\frac{b_p [S^n]}{1+s_p}, \qquad
				0=\frac{\gamma e^{-\theta t} b_q \left([S]-[S^2]\right)}{1+s_q}-e^{-\theta t}-\sum_{n=1}^{M}\Psi_n\frac{b_q \left([S^n]-[S^{n+1}]\right)}{1+s_q},
			\end{equation*}
			which lead to~\eqref{eq:s_lambda_pq_M_ln}.

			\section{Proof of Theorem~\ref{thm:opt_complete_scale2}}
			\label{app:opt_complete_scale2}	
			
			By Theorem~\ref{thm:pontryagin}, any solution of~\eqref{eq:profit} must satisfy $\frac{\partial H}{\partial s_p}=\frac{\partial H}{\partial s_q}=0$, where
			\begin{subequations}
				\label{eqs:ham_prop}
				\begin{equation}
					\label{eq:ham_prop}
					H(t,f(t),s_p(t),s_q(t))=\Big(\gamma \frac{df}{dt}-(1-f)s_p-f(1-f)s_q\Big)e^{-\theta t}+\Psi\frac{df}{dt},
				\end{equation}
				and
				\begin{equation}
					\label{eq:lambda_prop}
					\frac{d\Psi}{dt}=-\frac{\partial H}{\partial f}, \qquad \Psi(T)=0.
				\end{equation}
			\end{subequations}
			Substituting~\eqref{eq:complete_limit} in~\eqref{eqs:ham_prop} yields
			\begin{subequations}
				\label{eqs:H_lambda_prop}
				\begin{equation*}
					H=\Big(\gamma e^{-\theta t}+\Psi \Big)(1-f)\big(p(s_p)+q(s_q)f\big)-((1-f)s_p+f(1-f)s_q)e^{-\theta t},
				\end{equation*}
				and
				\begin{equation*}
					%\label{eq:lambda_prime_pq}
					\frac{d\Psi}{dt}
					=\Big(\gamma e^{-\theta t}+\Psi\Big)\big(p(s_p)+q(s_q)(2f-1)\big), \qquad \Psi(T)=0.
				\end{equation*}
			\end{subequations}
			Equation~\eqref{eq:s_lambda_pq_scale2} follows from substituting~\eqref{eq:p_q_s} into~\eqref{eqs:H_lambda_prop} and taking the condition $\frac{\partial H}{\partial s_p}=\frac{\partial H}{\partial s_q}=0$ from~\eqref{eq:s_condition}.
			
			\section{Numerical Methods}
			\label{app:numerical}
			In this paper, we solve boundary-value problems of the form
			\begin{subequations}
				\label{eqs:bvp_gen}
				\begin{align}
					\label{eq:f_gen}
					\frac{d{\bf f}}{dt}&={\bf g}\Big(t,{\bf f}(t),{\bf s}\big(t,{\bf f}(t),\boldsymbol{\Psi}(t)\big)\Big), &&{\bf f}(0)={\bf f}_0,\\
					\label{eq:lambda_gen}
					\frac{d\boldsymbol{\Psi}}{dt}&={\bf h}\Big(t,{\bf f}(t),{\bf s}\big(t,{\bf f}(t),\boldsymbol{\Psi}(t)\big),\boldsymbol{\Psi}(t)\Big), &&\boldsymbol{\Psi}(T)=0,
				\end{align}
			\end{subequations}
			%\begin{subequations}
			%	\label{eqs:bvp_gen}
			%	\begin{align}
				%		\label{eq:f_gen}
				%		\frac{d{\bf f}}{dt}&={\bf g}(t,{\bf f}(t),{\bf s}(t)), \qquad {\bf f}(0)={\bf f}_0,\\
				%		\label{eq:lambda_gen}
				%		\frac{d\boldsymbol{\Psi}}{dt}&={\bf h}(t,{\bf f}(t),{\bf s}(t),\boldsymbol{\Psi}(t)), \qquad \boldsymbol{\Psi}(T)=0,\\
				%		\label{eq:s_gen}
				%		{\bf s}(t)&= \Phi(t,{\bf f}(t),\boldsymbol{\Psi}(t)),
				%	\end{align}
			%\end{subequations}
			where ${\bf f}, \boldsymbol{\Psi}\in \mathbb{R}\to\mathbb{R}^d$. This is a system of $2d$ nonlinear ODEs with $d$ boundary conditions for ${\bf f}$ at $t=0$ and $d$ boundary conditions for $\boldsymbol{\Psi}$ at $t=T$. We use two numerical methods to solve~\eqref{eqs:bvp_gen}.
			
			%\subsection*{Forward-backward sweeps}
			In the {\bf forward-backward sweeps method}, we freeze the value of $\boldsymbol{\Psi}(t)$ while solving~\eqref{eq:f_gen} from left to right, i.e., for increasing values of $t$, and then freeze the value of ${\bf f}(t)$ while solving~\eqref{eq:lambda_gen} from right to left. The algorithm proceeds as follows:
			\begin{enumerate}
				\item 
				Set $\boldsymbol{\Psi}^{(0)}(t)\equiv 0$.
				\item 
				Solve the initial-value problem~\eqref{eq:f_gen} in $0\leq t\leq T$ for ${\bf f}^{(n)}(t)$ using $\boldsymbol{\Psi}(t)=\boldsymbol{\Psi}^{(n-1)}(t)$.
				\item 
				Solve the intial-value problem~\eqref{eq:lambda_gen} in $T\geq t\geq 0$ for $\boldsymbol{\Psi}^{(n)}(t)$ using ${\bf f}(t)={\bf f}^{(n)}(t)$ .
				\item 
				Repeat steps 2-3 until $\|{\bf f}^{(n)}-{\bf f}^{(n-1)}\|_{\infty}<\text{TOL}$
			\end{enumerate}

			\noindent In the {\bf shooting method}, we proceed as follows:
			\begin{enumerate}
				\item 
				Guess an initial value $\boldsymbol{\Psi}_0$.
				\item 
				Solve~\eqref{eqs:bvp_gen} for $0\leq t\leq T$ as an initial value problem with $\boldsymbol{\Psi}(0)=\boldsymbol{\Psi}_0$.
				\item 
				Use a root-finding method to search for $\boldsymbol{\Psi}_0$ for which $\boldsymbol{\Psi}(T)=0$.
			\end{enumerate}
			
			In most cases, see e.g.,~\eqref{eq:bvp_pq}, the ODE for $\Psi_i(t)$ is of the form
			\begin{equation}
				\label{eq:lambda_some}
				\frac{d\Psi_i}{dt}=\Big(\gamma e^{-\theta t}+\Psi_i\Big)\big(A+B(t)\big),
			\end{equation}
			where $A>0$ and $B(t)\to 0$ as $t\to\infty$. If $T=\infty$, the numerical solution for $\Psi(t)$ will grow exponentially as $t\to\infty$, while the analytical solution should decay to $0$. To see this, we first prove
			\begin{lemma}
				\label{lem:lambda_asymptotic}
				Let $\Psi_i$ be the solution of~\eqref{eq:lambda_some}. Then 
				$
				\Psi_i(t)\sim c_2 e^{-\theta t}$ as $ t\to\infty$, where $c_2:= -\frac{\gamma A}{\theta+A}$.
			\end{lemma}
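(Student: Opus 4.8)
\noindent
The plan is to reduce~\eqref{eq:lambda_some} to a scalar linear ODE whose coefficient and forcing have explicit limits as $t\to\infty$, and then to single out the bounded solution. First I would substitute $\Psi_i(t)=e^{-\theta t}\phi(t)$, which factors out the conjectured decay rate; dividing through by $e^{-\theta t}$ turns~\eqref{eq:lambda_some} into
\[
\phi' = \big(\theta + A + B(t)\big)\phi + \gamma\big(A + B(t)\big).
\]
Since $B(t)\to 0$, the limiting equation $\phi'=(\theta+A)\phi+\gamma A$ has the unique equilibrium $\phi^{\ast}=-\gamma A/(\theta+A)=c_2$. Shifting by it, $\psi:=\phi-c_2$, and using the algebraic identities $(\theta+A)c_2+\gamma A=0$ and $\gamma+c_2=\gamma\theta/(\theta+A)$, one arrives at the clean equation
\[
\psi' = \big(\theta + A + B(t)\big)\psi + \frac{\gamma\theta}{\theta+A}\,B(t).
\]
The assertion $\Psi_i(t)\sim c_2 e^{-\theta t}$ is now exactly the statement $\psi(t)\to 0$.

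Next I would use that the solution of interest is bounded: by the a priori bound $-\gamma e^{-\theta t}\le\Psi_i(t)\le 0$ (see Lemma~\ref{lem:lambda} and Lemma~\ref{lem:lambda_1d}) we get $|\phi|\le\gamma$, so $\psi$ is bounded on $[t_0,\infty)$ for any $t_0$. The homogeneous solutions of the $\psi$-equation grow like $\exp\!\big(\int(\theta+A+B)\big)$, so there is a \emph{unique} bounded solution, which I would extract by integrating with the decaying integrating factor $\mu(t):=\exp\!\big(-\int_0^t(\theta+A+B(s))\,ds\big)$. Because $\mu(t)\to 0$ and $\psi$ is bounded, $\mu(t)\psi(t)\to 0$, and integrating $(\mu\psi)'=\tfrac{\gamma\theta}{\theta+A}\mu B$ from $t$ to $\infty$ gives
\[
\psi(t) = -\frac{\gamma\theta}{\theta+A}\int_t^\infty \exp\!\Big(-\!\int_t^\tau\big(\theta + A + B(s)\big)\,ds\Big)B(\tau)\,d\tau .
\]

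The final step is a routine estimate. Choosing $t_0$ so large that $|B(s)|\le(\theta+A)/2$ for $s\ge t_0$, the kernel above is bounded by $e^{-(\theta+A)(\tau-t)/2}$, so $|\psi(t)|\le \tfrac{2\gamma\theta}{(\theta+A)^2}\sup_{\tau\ge t}|B(\tau)|$ for $t\ge t_0$; since $B(\tau)\to0$, the right-hand side tends to $0$, hence $\psi(t)\to 0$ and the lemma follows. I expect the only genuine subtlety to be the selection of the correct solution: the homogeneous solutions of both the $\Psi_i$- and $\psi$-equations blow up, so one must integrate \emph{backward from} $+\infty$ rather than forward from a finite time, and invoke the a priori boundedness of $\Psi_i$ to pin down the constant of integration; the algebra and the tail estimate are then just bookkeeping.
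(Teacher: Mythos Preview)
Your argument is correct and considerably more rigorous than the paper's. The paper's proof is a formal asymptotic calculation: it drops $B(t)$ to get the constant-coefficient equation $\Psi_i'\sim(\gamma e^{-\theta t}+\Psi_i)A$, writes down the general solution $c_1e^{At}+c_2e^{-\theta t}$, and selects $c_1=0$ by invoking the terminal condition $\lim_{t\to\infty}\Psi_i=0$. That sketch identifies $c_2$ correctly but does not justify why the solution of the \emph{perturbed} equation inherits the same asymptotics.

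Your route---substitute $\Psi_i=e^{-\theta t}\phi$, shift to the equilibrium $c_2$, use the a~priori bound on $\Psi_i$ to force boundedness of $\psi$, and then integrate backward from $+\infty$ with the integrating factor to obtain the explicit representation and tail estimate---turns that heuristic into an honest proof. In effect you are proving a baby Levinson-type result (asymptotic integration of a linear ODE with vanishing perturbation) rather than assuming it. The only thing to flag is that the bound $-\gamma e^{-\theta t}\le\Psi_i\le 0$ you import from Lemmas~\ref{lem:lambda} and~\ref{lem:lambda_1d} is, in Lemma~\ref{lem:lambda}, only stated for $t\ge\tilde t$; that is harmless here since you need it only for large $t$, but it is worth saying explicitly. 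Alternatively you could simply take $\lim_{t\to\infty}\Psi_i=0$ as the terminal condition (as the paper does) and deduce boundedness of $\phi$ on a tail directly from that.
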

			\begin{proof}
				Since $\lim\limits_{t\to\infty}B=0$, then
				\begin{equation}
					\label{eq:lam_sol1}
					\frac{d\Psi_i}{dt}\sim \Big(\gamma e^{-\theta t}+\Psi_i\Big)A, \qquad t\to\infty.
				\end{equation}
				Solving this ODE gives
				\begin{equation}
					\label{eq:lam_sol}
					\Psi_i \sim c_1 e^{A t}+c_2 e^{-\theta t}.
				\end{equation}
				Since $\lim\limits_{t\to\infty}\Psi_i=0$, then $c_1=0$, and so $\Psi_i\sim c_2 e^{-\theta t}$.
				Substituting this back into~\eqref{eq:lam_sol1} gives~$c_2$.
			\end{proof}
			
			Equation~\eqref{eq:lam_sol} explains the spurious exponential growth of the numerical solution. Analytically, $c_1=0$, but numerically we only have that $c_1 \approx 0$, and so $c_1 e^{A_i t}$ grows exponentially for $t\gg 1$. To avoid this, we modify the shooting method as follows:
			\begin{enumerate}
				\item 
				Find the first time $t^*$ at which $1-f^0(t):=f(t;{\bf s}(t)\equiv 0)$ is smaller than a certain tolerance. If no such time exists, let $t^*=T$.
				\item 
				Solve the boundary-value problem for  $0\leq t\leq t^*$ using the shooting method, with the terminal boundary condition being
				$\Psi_i(t^*)=c_2 e^{-\theta t^*}$.
				\item
				Solve the boundary-value problem for $t^*\leq t<\infty$ while replacing $\Psi_i(t)$ with $c_2 e^{-\theta t}$. 
				\item
				Use the numerical solution of the boundary-value problem to compute ${\bf s}(t)$.
			\end{enumerate}
			
			Although the previous analysis was for $T=\infty$, the spurious exponential growth of the numerical solution also occurs for $T\gg 1$, so we use the same method.

			\section*{Acknowledgments}
			\label{sec:acknowledgments}
			We thank Gila Fruchter and Christophe Van den Bulte for useful discussions.

		\end{document}